\newtheorem{theorem}{Theorem}[section] %numbering divided by section
\newtheorem{lemma}[theorem]{Lemma}
\newtheorem{corollary}[theorem]{Corollary}
\newtheorem{claim}[theorem]{Claim}
\newtheorem{remark}[theorem]{Remark}
\newtheorem{construction}[theorem]{Construction}
\title{Saturation for the $3$-Uniform Loose $3$-Cycle}
\author{Sean English\footnote{University of Illinois at Urbana--Champaign, Urbana, IL. E-mail: {\tt senglish@illinois.edu}.}, Alexandr Kostochka\thanks{
\footnotesize {University of Illinois at Urbana--Champaign, Urbana, IL
 and Sobolev Institute of Mathematics, Novosibirsk 630090, Russia. E-mail: \texttt {kostochk@math.uiuc.edu}.
 Research
is supported in part by  NSF RTG Grant DMS-1937241.
}},
Dara Zirlin\footnote{University of Illinois at Urbana--Champaign, Urbana, IL.}}
\newcommand{\subjclass}[2][2020]{%
  \let\@oldtitle\@title%
  \gdef\@title{\@oldtitle\footnotetext{#1 \emph{Mathematics subject classification.} #2}}%
}
\newcommand{\keywords}[1]{%
  \let\@@oldtitle\@title%
  \gdef\@title{\@@oldtitle\footnotetext{\emph{Key words.} #1.}}%
}
\subjclass{05C35, 05D99}
\keywords{Saturation, Loose Cycles, Hypergraphs, Discharging}
\date{}
\begin{document}

	\maketitle
	
\vspace{-.9cm}	
	\begin{flushright}
{\em	Dedicated to the memory of Landon Rabern}
	\end{flushright}
\vspace{0.7cm}

	\begin{abstract}
		Let $F$ and $H$ be $k$-uniform hypergraphs. We say $H$ is $F$-saturated if $H$ does not contain a subgraph isomorphic to $F$, but $H+e$ does for any hyperedge $e\not\in E(H)$. The saturation number of $F$, denoted $\mathrm{sat}_k(n,F)$, is the minimum number of edges in a $F$-saturated $k$-uniform hypergraph $H$ on $n$ vertices. Let $C_3^{(3)}$ denote the $3$-uniform loose cycle on $3$ edges. In this work, we prove that
		\[
		\left(\frac{4}3+o(1)\right)n\leq \mathrm{sat}_3(n,C_3^{(3)})\leq \frac{3}2n+O(1).
		\]
		This is the first non-trivial result on the saturation number for a fixed short hypergraph cycle.
	\end{abstract}
	
	\section{Introduction}
	
	Let $F$ and $H$ be $k$-uniform hypergraphs. We say $H$ is \emph{$F$-free} if $H$ does not contain $F$ as a sub-hypergraph. One of the central problems in extremal combinatorics is to determine the \emph{Tur\'an number} of $H$, denoted $\mathrm{ex}_k(n,F)$, and defined
	\[
	\mathrm{ex}_k(n,F)=\max \{|E(H)|:H\text{ is a }F\text{-free hypergraph on }n\text{ vertices}\}.
	\]
	We say that $H$ is $F$-saturated if $H$ is $F$-free, but $H+e$ contains a copy of $F$ for every hyperedge $e\not\in E(H)$. Since each $F$-free graph $H$ with $E(H)=\mathrm{ex}_k(|V(H)|,F)$ is $F$-saturated,  Tur\'an numbers can be defined in terms of maximizing the number of edges over $F$-saturated graphs rather than $F$-free graphs. A consequence of this phrasing of the definition is that it leads to a natural minimization problem related to Tur\'an numbers. Originally introduced by Erd\H os, Hajnal and Moon~\cite{EHM1964} using different terminology, {\em the saturation number}, $\mathrm{sat}_k(n,F)$ is defined by
	\[
	\mathrm{sat}_k(n,F)=\min\{|E(H)|:H\text{ is a }F\text{-saturated hypergraph on }n\text{ vertices}\}.
	\]
	K\' aszonyi and  Tuza~\cite{KT1986} proved that
	 $\mathrm{sat}_2(n,F)=O(n)$, and then Pikhurko~\cite{P1999} proved that for general $k$, $\mathrm{sat}_k(n,F)=O(n^{k-1})$.
	
	In the seminal paper~\cite{EHM1964}, %on the topic of saturation,
	Erd\H os, Hajnal and Moon determined the saturation numbers for graph cliques  exactly. The first result for saturation of $k$-uniform hypergraphs is due to Bollob\'as~\cite{B1965}, and in this work, the method known as the \emph{set-pair method}~\cite{T1994} was first developed. In addition to complete graphs, different trees have received careful study~(e.g. \cite{FFGJ2009, KT1986}). In terms of hypergraphs, most of the specific saturation numbers determined outside of complete graphs involve forbidden families of hypergraphs, such as triangular families~\cite{P2004}, intersecting hypergraphs~\cite{DDFL1985}, and very recently Berge hypergraphs (e.g. \cite{AE2019, AW2019, EGMT2019, EGGMS2019, GPTV2022}). For a detailed dynamic survey on all aspects of saturation in graphs and hypergraphs, see~\cite{FFS2011}.

\subsection{Saturation for Cycles}

One of the families of graphs that have received the most attention in saturation literature is cycles. While cycles have received considerable attention, very few exact results have been obtained. 
For specific short graph cycles, the following bounds are known (We assume $n$ is large enough for  results below):
\begin{itemize}
    \item $\mathrm{sat}_2(n,C_3)=\mathrm{sat}_2(K_3,n)=n-1$,~Erd\H os,  Hajnal, and  Moon~\cite{EHM1964},
    \item $\mathrm{sat}_2(n,C_4)=\left\lfloor\frac{3n-5}{2}\right\rfloor$,~Ollmann~\cite{O1972},
    \item $\mathrm{sat}_2(n,C_5)=\left\lceil\frac{10}{7}(n-1)\right\rceil$,~Chen~\cite{C2009},
    \item $\left\lceil\frac{7n}6\right\rceil-2\leq \mathrm{sat}_2(n,C_6)\leq\left\lceil\frac{10}{7}(n-1)\right\rceil$,~
    Gould, \L uczak,   Schmitt~\cite{GLS2006} and Zhang,  Luo,   Shigeno~\cite{ZLS2015}.
\end{itemize}

Aside from these small cases, the best-known  bounds on $\mathrm{sat}_2(n,C_\ell)$ for fixed $\ell$ are 
obtained by F\" uredi  and   Kim~\cite{FK2013}:
\[
\left(1+\frac{1}{\ell+2}\right)n-1<\mathrm{sat}_2(n,C_\ell)<\left(1+\frac{1}{\ell-4}\right)n+\binom{\ell-2}{2}.
\]
Of note is the fact that even for graphs, the asymptotics for saturation numbers of cycles is not known already for cycles of length $6$ or more. Also of note, for the $5$-cycle, through a technical feat, it was shown that there are exactly $29$ distinct minimal constructions, some of which are specific graphs that only work for one value of $n$, others which constitute infinite families~\cite{C2011}. This highlights a difficulty in studying the saturation function in general and for cycles - one usually does not expect  to prove a nice stability result when there are multiple different extremal examples.

Saturation numbers for the family of all cycles of length above a certain value $\ell$ have also been studied, with exact results determined for  $3\leq \ell\leq 6$~\cite{FJMTW2012, MHHG2021}. In addition to these results involving cycles of short length, many results on the saturation numbers of Hamiltonian cycles have been studied, with numerous results leading up to proving that $\mathrm{sat}_2(n,C_n)=\left\lceil\frac{3n}{2}\right\rceil$ (upper bound given first in~\cite{B1972}, while the lower bound can be found in~\cite{LJZY1997}).

When passing from graphs to hypergraphs, there are many ways to generalize the notion of a cycle. One of the more general notions of a cycle in a hypergraph is an $r$-overlapping cycle. Namely, the {\em $k$-uniform $r$-overlapping cycle on $\ell$ edges} is the unique $k$-uniform hypergraph on $\ell(k-r)$ vertices and $\ell$ edges such that there exists an ordering of the vertex set, say $v_1,v_2,\dots,v_{\ell(k-r)}$ such that $e_i=\{v_{(k-r)(i-1)+1},v_{(k-r)(i-1)+2},\dots,v_{(k-r)(i-1)+k}\}$ is an edge for each $1\leq i\leq \ell$ (indices taken modulo $\ell(k-r)$). When $r=1$, we will simply call this hypergraph the $k$-uniform \textbf{loose} cycle on $\ell$ edges, and denote it by $C_\ell^{(k)}$.

Up until this work, the entire literature involving saturation for $r$-overlapping cycles has been for Hamiltonian cycles (for a $k$-uniform $r$-overlapping Hamiltonian cycle to exist in an $n$-vertex graph, we must have $(k-r)\mid n$). In this setting, due to the difficulty in such problems, the work has been mostly focused on determining the order of magnitude for which these saturation numbers grow (see e.g.~\cite{DZ2012, KK1999, RZ2016} for some of the results in this direction).

\subsection{Main Result}

In this work, we study the saturation function for a short loose cycle, namely $C_3^{(3)}$. Our main result is as follows.
	\begin{theorem}\label{main}
		We have that
		\[
		\left(\frac{4}3+o(1)\right)n\leq \mathrm{sat}_3(n,C_3^{(3)})\leq \frac{3}2n+O(1).
		\]
	\end{theorem}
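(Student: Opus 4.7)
My plan addresses the upper and lower bounds separately, with the lower bound being the more substantial work.

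\medskip

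\noindent\textbf{Upper bound.} I would produce an explicit $C_3^{(3)}$-saturated hypergraph $H_n$ on $n$ vertices with $\frac{3}{2}n + O(1)$ edges. A natural template is to fix a small constant-size ``core'' set $S \subset V(H_n)$ containing a bounded set of edges, pair up the remaining $n - |S|$ vertices as $\{a_i, b_i\}$, and for each pair include a bounded number of edges of the form $\{a_i, b_i, s\}$ with $s \in S$, plus a few carefully chosen ``bridging'' edges linking different pairs or connecting the core to pairs. Tuning the per-pair edge count to average $3$ gives $\frac{3}{2}n + O(1)$ edges. One then verifies two things: $H_n$ contains no $C_3^{(3)}$ (because nearly all single-vertex intersections between edges pass through $S$, and three edges forming a loose cycle would force a single edge to contain two distinct core vertices in an impossible way), and every non-edge closes into a $C_3^{(3)}$ using two existing edges. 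The bridging edges are inserted precisely to handle the cases---such as a non-edge of the form $\{a_i, b_i, x\}$ with $x$ outside the pair---that the pure pair-to-core structure fails to saturate.

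\medskip

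\noindent\textbf{Lower bound.} The keyword ``Discharging'' in the abstract signals the intended method, and I would follow it. Let $H$ be an arbitrary $C_3^{(3)}$-saturated $3$-uniform hypergraph on $n$ vertices. The target is $\sum_v d(v) = 3|E(H)| \geq (4+o(1))n$, i.e.\ average degree at least $4 - o(1)$.

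\emph{Step 1 (Structural setup).} The saturation hypothesis says that for every triple $e = \{x,y,z\} \notin E(H)$, there exist $e_2, e_3 \in E(H)$ such that $\{e, e_2, e_3\}$ induces a $C_3^{(3)}$. Unpacking, there are distinct vertices $w, \alpha, \beta$ outside $e$ with $\{x, w, \alpha\}, \{w, y, \beta\} \in E(H)$ (up to relabeling). I would distill from this structural lemmas quantifying how many edges and what ``path-of-length-two'' configurations must appear near every vertex of low degree.

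\emph{Step 2 (Initial charges).} Assign each vertex $v$ an initial charge $\mu(v) = d(v)$, so that $\sum_v \mu(v) = 3|E(H)|$.

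\emph{Step 3 (Discharging rules).} Define local rules that transfer charge from high-degree vertices to low-degree ones---especially isolated vertices and vertices of degree $1$, $2$, or $3$---where the rules are guided by the structural lemmas of Step $1$, so that each rescue of a low-degree deficit is paid for by a specific nearby edge configuration that saturation provides.

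\emph{Step 4 (Final count).} Verify that after discharging, every $v$ has final charge $\mu^*(v) \geq 4 - o(1)$; then $3|E(H)| = \sum_v \mu^*(v) \geq (4-o(1))n$, giving the claimed bound.

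\medskip

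\noindent\textbf{Main obstacle.} The hard part will be Steps $3$--$4$ for the lower bound. Saturation forces vertices of degree $0$, $1$, or $2$ to have rich structure in their second neighborhood, but showing that this structure is rich enough to supply the $4 - d(v)$ units of charge \emph{without double-counting} between nearby low-degree vertices sharing potential donor edges is delicate. I expect the intricate case analysis around clusters of low-degree vertices---and verifying a clean charge balance globally---to constitute the bulk of the proof, and this is also presumably where the $o(1)$ additive slack in the asymptotic bound is absorbed.
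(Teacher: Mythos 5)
Your high-level plan is the right one---both the explicit construction for the upper bound and asymptotic discharging for the lower bound match the paper---but there are concrete gaps in each half.

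\textbf{Upper bound.} The ``pair-to-core'' template does not work as stated. If every edge through $a_i$ also contains $b_i$, then $(a_i,b_i)$ is a \emph{bad pair} (no loose path of length $2$ joins them), and for any third vertex $z$ outside $\{a_i,b_i\}\cup S$, the non-edge $\{a_i,b_i,z\}$ cannot complete to a loose $3$-cycle: any second edge of the cycle would meet the new edge in a single vertex $v_1\in\{a_i,b_i,z\}$, but every existing edge through $a_i$ (resp.\ $b_i$) contains $b_i$ (resp.\ $a_i$) and hence intersects the new edge in at least two vertices. To repair this you need, for \emph{each} pair, at least one edge through $a_i$ avoiding $b_i$ and one through $b_i$ avoiding $a_i$; that is $\Theta(n)$ extra edges, not $O(1)$, which breaks the $\tfrac32$ constant. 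The paper's fix is to replace pairs with quadruples (``bricks'') that contain two asymmetric ``internal'' vertices $a_{x,i}, a_{y,i}$ together with two universal vertices $x,y$; the asymmetry $\{x,a_{x,i},a_{1,i}\}$ vs.\ $\{y,a_{y,i},a_{1,i}\}$ guarantees that every within-brick non-edge closes without needing per-brick patches.

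\textbf{Lower bound.} Your Step $3$ says ``transfer charge from high-degree vertices to low-degree ones,'' but this alone cannot reach $4-o(1)$. The bottleneck vertices (degree-$2$ vertices with only one incidence into the high-degree set) only have one high neighbor to draw from, so a pure high-to-low transfer gives them at most charge $3$. The paper is forced to run a \emph{second} round of discharging \emph{between low-degree vertices} (via the ``helpful''/``half-helpful'' mechanism), which in turn requires knowing that the low-degree donors really can afford to give. Establishing that needs a chain of structural lemmas, the most important of which you have not identified: there exist two vertices $x,y$ such that all but $O(n/\ell)$ low-degree vertices that are not doubly attached to high-degree vertices lie in $N(\{x,y\})$ (Lemma~\ref{lemma x and y in H}). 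This is the pivot that tames the exceptional structures. Finally, the ``garbage set'' framework---defining nested exception sets $R_1,\dots,R_{10}$ of total size $o(n)$ and proving that every vertex outside them ends with charge $\geq 4$---is what actually converts the local arguments into the claimed asymptotic bound; your Step $4$ statement that \emph{every} vertex finishes with charge $\geq 4-o(1)$ is not what is proved (some vertices finish with charge as low as $0$), and the honest statement requires the exceptional-set bookkeeping.
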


To the authors best knowledge, this is the first non-trivial result on saturation numbers for a specific hypergraph cycle of fixed length (the first author and others did provide some bounds on saturation for short Berge hypergraphs cycles in~\cite{EGGMS2019}, but in general results involving families of hypergraphs tend to be easier than results involving a single hypergraph).

It is worth noting that at least for small values of $n$, the $o(1)$ in the lower bound is necessary, as for example one might note that $\mathrm{sat}_3(9,C_3^{(3)})=6$ (See Figure~\ref{figure small example} for the optimal construction). 
The authors think  that for large $n$, 
the upper bound is more likely to be asymptotically correct than the lower bound.

\begin{figure}
\begin{center}
\includegraphics[width=4cm]{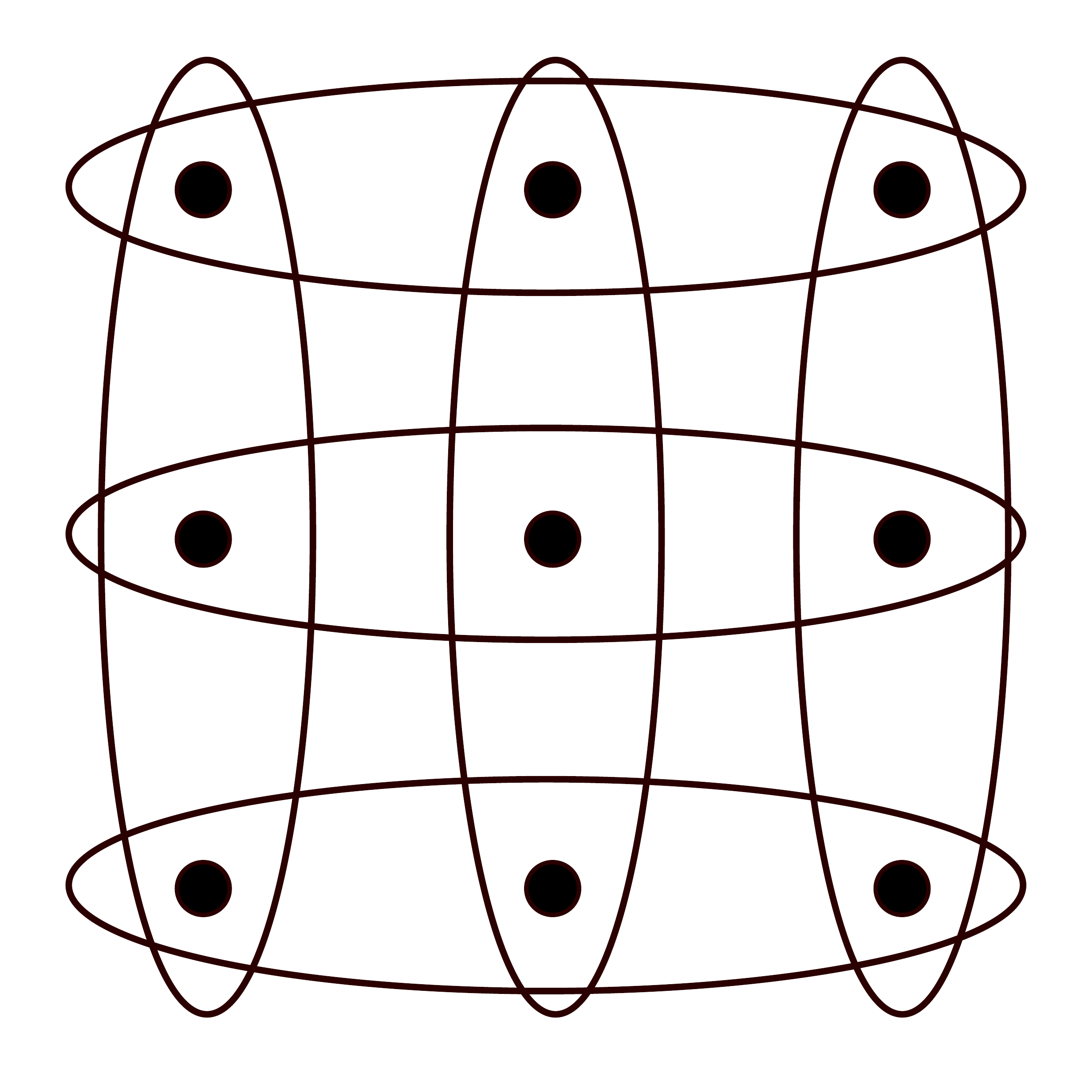}
\caption{A $C_3^{(3)}$-saturated hypergraph with very few edges exhibiting that $\mathrm{sat}_3(9,C_3^{(3)})\leq 6$.}\label{figure small example}
\end{center}
\end{figure}

The proof of our main result is broken up as follows. In Section~\ref{section upper bound}, we provide a relatively straightforward deterministic construction on $\frac{3}{2}n+O(1)$ edges and show that this construction is saturated. In the remaining sections of the paper we show the  more involved lower bound. The proof  uses the technique of \textbf{asymptotic discharging}. For a primer on discharging, see~\cite{CW2017}. Normally in a discharging proof, it would be shown that certain small structures are \emph{reducible}, i.e. they cannot exist in a minimum counterexample to a proposition. Here we allow many configurations to exist, but only in numbers small enough that their existence does not affect the leading term of the final bound. In Section~\ref{section lower bound proof sketch}, we provide a high-level proof sketch that goes through the main ideas of the proof without the technical details. Then in Sections~\ref{section preliminary lemmas}, \ref{section 3 vertex} and \ref{section 2 vertex}, we provide the main structural results necessary for the discharging proof, and finally in Section~\ref{section discharging}, we give our discharging scheme and provide the proof of the lower bound using this discharging scheme.

\subsection{Notation and Definitions}

Given a cycle $C_3^{(3)}$, we will call the vertices of degree $2$ the {\em core vertices} of the cycle. We may refer to $C_3^{(3)}$ as a \emph{triangle}. 

Vertices of degree $i$ in a graph $G$ will be called {\em $i$-vertices}, and $i$-vertices adjacent to a vertex $v$ will be called
{\em $i$-neighbors} of $v$. For $A\subseteq V(G)$, $N(A)$ denotes the set of vertices  $u\in V(G)\setminus A$ such that some edge of $G$ contains $u$ and some vertex in $A$, and $N[A]=A\cup N(A)$. Given a pair of vertices $u$ and $v$, we write $d(uv)$ to denote the co-degree of the pair, i.e. the number of edges that contain both $u$ and $v$. We may say that $u$ is a double neighbor  or triple neighbor  of $v$ if $d(uv)\geq 2$ or $d(uv)\geq 3$ respectively.

Given a graph $G$ and sets $A,B,C\subseteq V(G)$, we will say an edge $e=\{a,b,c\}\in E(G)$ is an $(A,B,C)$ edge if (after possibly renaming) $a\in A$, $b\in B$ and $c\in C$. If one of the sets $A$, $B$ or $C$ is of the form $\{v\in V(G)\mid d(v)=d\}$ for some $d\in \mathbb{N}$, we will often just write the number $d$ in place of the set. For example, we may say an edge $e$ is an $(A,4,2)$ edge if $e$ contains one vertex in the set $A$, one vertex of degree $4$, and one vertex of degree $2$. Finally, if one of the sets is of the form $\{v\}$ for some $v\in V(G)$, we will simply write $v$ in place of the set.

For $u,v\in V(G)$, a {\em $u,v$-link} is a $2$-edge loose path $L$ from $u$ to $v$. The common vertex of the two edges of $L$ is the {\em center of $L$}. If there exists a $u,v$-link in $G$, we will say $uv$ is a \emph{good pair}, and if not, we will say $uv$ is a \emph{bad pair}.

\section{Upper Bound - A Construction}\label{section upper bound}

Let $n\geq 14$ be an integer. We now present a construction, $G_n$, that has $n$ vertices and $\frac{3}{2}n+O(1)$ edges.

\begin{construction}\label{construction saturated hypergraph}
Let $m$ and $c$ be integers such that $2\leq c\leq 5$ and $n=4m+c$. For each $i$ with $1\leq i\leq m+2-c$, let $A_i$ denote the $3$-uniform hypergraph on $6$ vertices and $6$ edges with $V(A_i)=\{x, y, a_{x,i}, a_{y,i}, a_{1,i}, a_{2,i}\}$ and
\[
E(A_i)= \big\{\{x,a_{x,i},a_{y,i}\},
\{y,a_{x,i},a_{y,i}\},\{x,a_{x,i},a_{1,i}\},\{x,a_{x,i},a_{2,i}\},\{y,a_{y,i},a_{1,i}\},\{y,a_{y,i},a_{2,i}\}\big\}.
\]
Furthermore, for each $i$ with $1\leq i\leq c-2$, let $B_i$ denote the hypergraph on $7$ vertices and $9$ edges such that $V(B_i)=\{x,y,b_{x,i},b_{y,i},b_{1,i},b_{2,i},b_{3,i}$ and
\[
E(B_i)=\big\{\{x,b_{x,i},b_{y,i}\},\{y,b_{x,i},b_{y,i}\},\{b_{1,i},b_{2,i},b_{3,i}\}\big\}\cup\bigcup_{j=1}^3\big\{\{x,b_{x,i},b_{j,i}\},\{y,b_{y,i},b_{j,i}\}\big\}.
\]
Now, let $G_n$ be union of the $A_i$'s and $B_i$'s, and note that 
\[
|V(G_n)|=2+4(m+2-c)+5(c-2)=4m+c=n,
\]
while
\[
|E(G_n)|=6(m-2+c)+9(c-2)=\begin{cases}
\frac{3}{2}n&\text{ if }n\equiv 0 \mod 4,\\
\frac{3}{2}n+\frac{3}{2}&\text{ if }n\equiv 1 \mod 4,\\
\frac{3}{2}n-3&\text{ if }n\equiv 2 \mod 4,\\
\frac{3}{2}n-\frac{3}{2}&\text{ if }n\equiv 3 \mod 4.\\
\end{cases}
\]
We call each of the subgraphs $A_i$ or $B_i$ a \textbf{brick} of $G_n$. See Figure~\ref{figure bricks} for drawings of the two types of bricks in $G_n$.
\end{construction}

\begin{figure}
\begin{center}
\includegraphics[width=4.1cm]{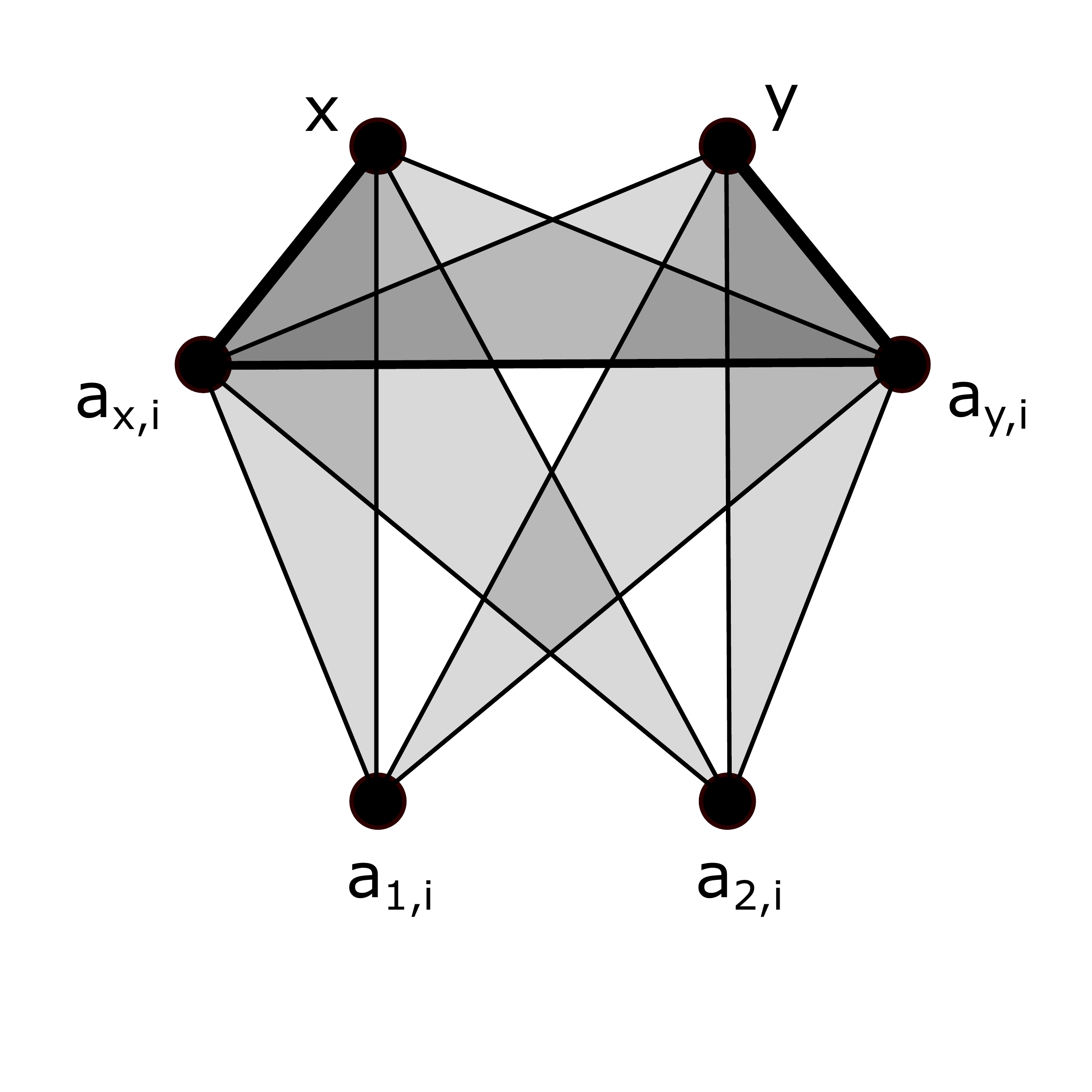}
\includegraphics[width=4cm]{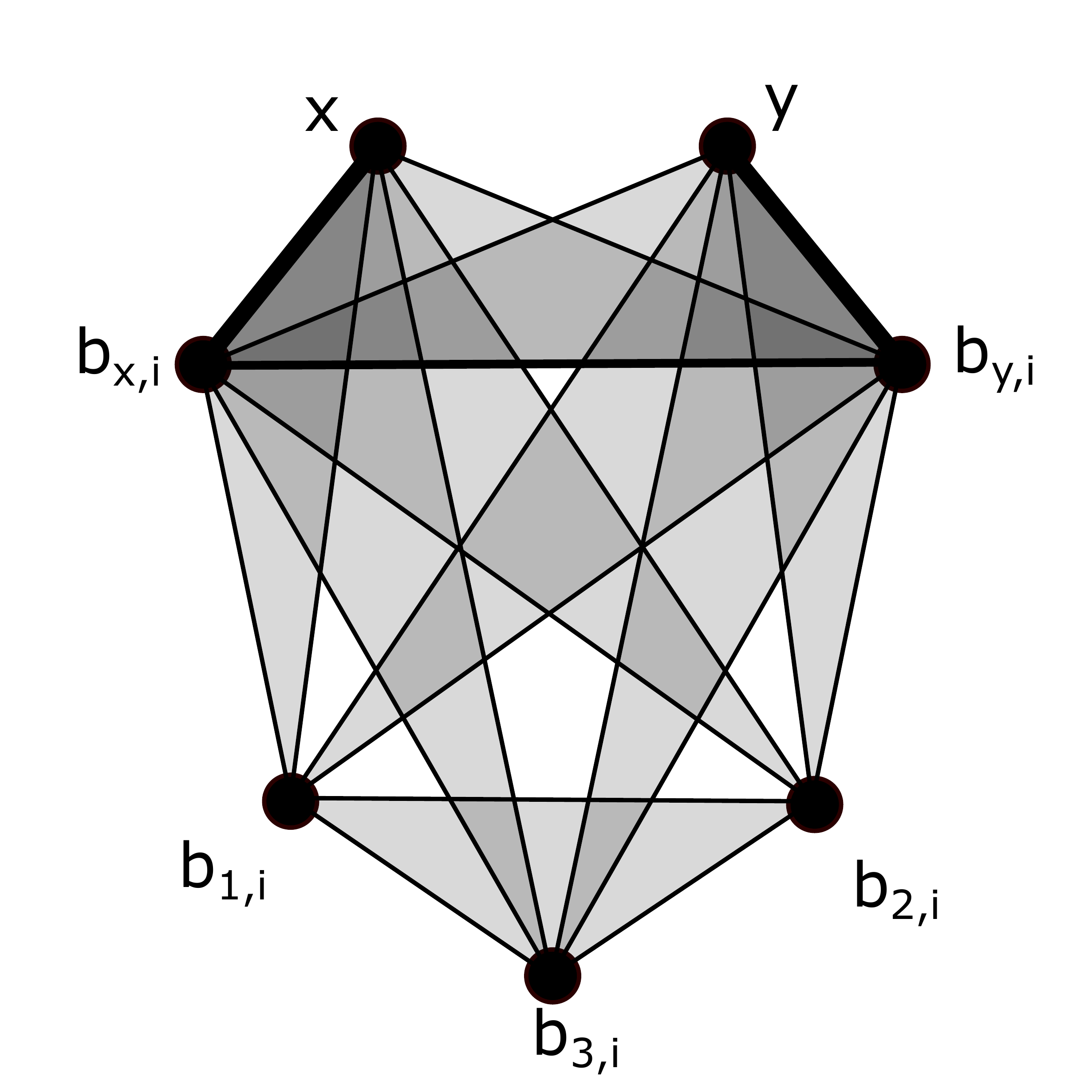}
\caption{The bricks $A_i$ and $B_i$ from Construction~\ref{construction saturated hypergraph}.}\label{figure bricks}
\end{center}
\end{figure}

\begin{theorem}\label{theorem upper bound}
The hypergraph $G_n$ from Construction~\ref{construction saturated hypergraph} is $C_3^{(3)}$-saturated for all $n\geq 14$. Consequentially,
\[
\mathrm{sat}_3(n,C_3^{(3)})\leq \frac{3}{2}n+O(1).
\]
\end{theorem}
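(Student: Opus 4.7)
The proof naturally splits into showing that $G_n$ is $C_3^{(3)}$-free and that adding any non-edge creates a copy of $C_3^{(3)}$.

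For freeness, I would classify each edge of $G_n$ as an \emph{$x$-edge} (containing $x$), a \emph{$y$-edge} (containing $y$), or a \emph{bottom edge} of the form $\{b_{1,i},b_{2,i},b_{3,i}\}$; these types are mutually exclusive since no edge contains both $x$ and $y$. The structural observations I would establish are: (i) any two $x$-edges in the same brick also share $a_{x,i}$ or $b_{x,i}$ and hence intersect in two vertices, and likewise for $y$-edges; (ii) any two edges from distinct bricks intersect in a subset of $\{x,y\}$; and (iii) a bottom edge is vertex-disjoint from every edge outside its brick $B_i$. A hypothetical loose $3$-cycle has three edges whose pairwise intersections are three distinct singletons. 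Running through the possible type-multisets (using $x\leftrightarrow y$ symmetry), each is ruled out via (i)--(iii): three $x$-edges either force a $2$-vertex overlap when two lie in the same brick or collapse all three core vertices to $x$ when they lie in distinct bricks; an $x$-edge and a $y$-edge from different bricks have empty intersection; and a bottom edge pins the other two edges into its own brick $B_i$, reducing to a direct check inside a single $B_i$.

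For saturation, let $e=\{u,v,w\}$ be a non-edge and seek a $2$-edge loose path (a link) in $G_n$ between two of $u,v,w$ whose center and two remaining endpoints all lie outside $\{u,v,w\}$ and are pairwise distinct; then $e$ together with the link's two edges forms the required $C_3^{(3)}$. The workhorse is that $x$ and $y$ each share an edge with every other vertex of $G_n$, so $x$ or $y$ is a natural candidate for the link-center for most pairs in $e$. Using the symmetries $A_i\leftrightarrow A_j$, $B_i\leftrightarrow B_j$, $x\leftrightarrow y$, $a_{1,i}\leftrightarrow a_{2,i}$, and permutations of $b_{1,j},b_{2,j},b_{3,j}$, I would reduce to a manageable list of cases organized by the types of $u,v,w$ (element of $\{x,y\}$, degree-$2$ or degree-$3$ interior vertex, vertex of the form $a_{x,i}/a_{y,i}$ or $b_{x,j}/b_{y,j}$, etc.) and in each case explicitly exhibit two existing edges forming the link. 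The hypothesis $n\geq 14$ ensures enough bricks so that when $u,v,w$ all lie inside a single brick, a second brick is available to route the link through.

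The main obstacle is the case analysis in Part 2, complicated by two kinds of conflict: the natural link-center might coincide with the third vertex $w$, and the two non-center vertices of the link might collide and destroy looseness. The second issue is sharpest at the degree-$2$ vertices $a_{1,i},a_{2,i}$, which each lie in a unique $x$-edge and a unique $y$-edge and therefore offer no flexibility in the choice of link endpoints; such bottleneck vertices must be handled by either rerouting the link through a sibling brick (using $n\geq 14$) or swapping the roles of $x$ and $y$.
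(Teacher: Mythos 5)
Your overall plan matches the paper's: for freeness, classify edges by whether they contain $x$, $y$, or neither, and argue that the three pairwise-singleton intersections of a putative triangle can't occur; for saturation, exhibit for each non-edge $e$ a $2$-edge link between two vertices of $e$ centered outside $e$. The paper organizes the saturation case analysis more economically than you propose — it first disposes of the case $x,y\in e$, then shows that a non-edge contained in a single brick already yields a triangle by a finite direct check inside that brick, and the remaining cases are classified solely by $|e\cap\{x,y\}|$ and the number of bricks $e$ meets, with every remaining link centered at $x$ or $y$. Your organization by the types of all three vertices of $e$ would also work but generates many more cases.

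There is one concrete misconception in your obstacle analysis. You claim that when $u,v,w$ all lie inside one brick, the hypothesis $n\ge 14$ lets you reroute the link through a sibling brick. That is impossible when at most one of $u,v,w$ lies in $\{x,y\}$: every edge containing a brick-private vertex lies entirely in that brick, the center $c$ of any link between two of $u,v,w$ therefore lies in the brick, and if $c\in\{x,y\}$ then the second link-edge would have to contain both $x$ and $y$ (no such edge exists). So any such link is forced to stay inside the brick, and, crucially, it generally cannot be centered at $x$ or $y$ either — for example, with $e=\{a_{x,i},a_{y,i},a_{1,i}\}$ the unique link is centered at $a_{2,i}$, since all $x$-edges of $A_i$ share $a_{x,i}$ and all $y$-edges share $a_{y,i}$. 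Your two escape routes (sibling brick, swap $x\leftrightarrow y$) therefore both fail in exactly the bottleneck case you single out, and you would need to discover the within-brick links with private centers. This is the content of the paper's ``directly verified'' step, which your sketch effectively skips. The role of $n\ge14$ is instead to make the brick counts $m+2-c$ and $c-2$ nonnegative and to guarantee at least three bricks for the three-brick case; it is not what rescues the within-brick case.
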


\begin{proof}
First we will show that $G_n$ is $C_3^{(3)}$-free. Assume to the contrary that $G_n$ contains a copy, $T$, of $C_3^{(3)}$, and note that all the core vertices of $T$ must be contained in the same brick of $G_n$ since they are adjacent. Furthermore, $x$ and $y$ cannot both be core vertices, so this implies $T$ must be completely contained inside one brick of $G_n$. However it can be directly verified that neither type of brick contains a copy of $C_3^{(3)}$. Thus, $G_n$ is $C_3^{(3)}$-free.

Now we will show that $G_n$ is $C_3^{(3)}$-saturated. Let $e$ be any triple of vertices such that $e\not\in E(G_n)$. First note that every brick of $G_n$ contains an $x,y$-link, so if both $x$ and $y$ are in $e$, then this creates a $C_3^{(3)}$, so we may assume at least one of $x$ or $y$ is not in $e$. Furthermore, under this assumption, it can be directly verified that if all three vertices in $e$ are contained in a single brick of $G_n$, this creates a $C_3^{(3)}$ within that brick, so we may assume otherwise.

\textbf{Case 1:} $e$ contains exactly one of $x$ or $y$. Assume without loss of generality that $x\in e$. Let $u,v\in e$ be the other two vertices of $e$, and note that $u$ and $v$ must be in different bricks of $G_n$. However, every vertex in every brick is in an edge that contains $y$ but not $x$. Thus $e$, along with the edge containing $y$ and $u$, but not $x$, and the edge containing $y$ and $v$, but not $x$, form a $C_3^{(3)}$.

\textbf{Case 2:} $\{x,y\}\cap e=\emptyset$ and $e$ intersects only two bricks of $G_n$. Let $e=\{u,v,w\}$ where $u$ and $v$ are in the same brick. It can be readily verified that regardless of which vertices $u$ and $v$ are in this brick, there is an edge in $G_n$ that contains exactly one of $u$ or $v$ and one of $x$ or $y$, say that it contains $u$ and $x$, but not $v$. This edge, along with any edge containing $x$ and $w$, and $e$ form a $C_3^{(3)}$ in $G_n$.

\textbf{Case 3:} $\{x,y\}\cap e=\emptyset$ and $e$ intersects three distinct bricks. Let $u,v\in e$. Then $e$ along with any edge containing $u$ and $x$, and any edge containing $v$ and $x$, form a $C_3^{(3)}$ in $G$.

Thus, in all cases, we find a $C_3^{(3)}$ in $G_n+e$, so $G_n$ is saturated.
\end{proof}

	\section{Lower Bound: Proof Sketch}\label{section lower bound proof sketch}
	
		We will assume for the rest of the paper that $n$ is large, and that $G$ is a $C_3^{(3)}$-saturated $3$-uniform hypergraph on $n$ vertices with $\mathrm{sat}_3(n,C_3^{(3)})$ edges. By Theorem~\ref{theorem upper bound}, we can crudely assume
		\[
		|E(G)|\leq 2n.
		\]
		
		At the simplest level, the goal of our proof will be to show that the average degree of $G$ is at least $4-o(1)$. To do this, we will use a  discharging scheme: every vertex will start with charge equal to its degree, and we will then move charge around according to certain rules until the following two things are satisfied:
		\begin{itemize}
			\item Every vertex has non-negative charge, and
			\item $n-o(n)$ vertices have charge at least $4$.
		\end{itemize} 
		In particular, charge will always be moved around via edges, i.e. any time our discharging rules move charge from one vertex to another, they will be adjacent.
		
		Let $\ell=\ell(n)\ll \log(n)$ be a function that tends to infinity with $n$ (but slowly). Let
		\[
		L=\{v\in V(G)\mid d(v)< \ell\},
		\]
		and
		\[
		M=V(G)\setminus L.
		\]
		We will call  the vertices in $L$ \emph{low vertices} of $G$, and the vertices in $M$  \emph{non-low vertices}. Since $|E(G)|\leq 2n$, %by a simple degree counting argument
		we have 
		\[
		|M|\leq \frac{3|E(G)|}{\ell}\leq \frac{6n}{\ell}=o(n).
		\]
		Since there are few vertices in $M$, they can give all their charge to vertices in $L$. We also will need to keep track of vertices whose degree is almost linear in $n$. In particular, let %$\epsilon=\epsilon(n)$ be such that 
		%\[ \frac{1}{\log n}\ll\epsilon\ll 1,	\]
%		and let
		\[
		H=\{v\in V(G)\mid d(v)\geq  n/\ell^2\}.
		\]
		We will call vertices in $H$  \emph{high vertices}. Note that for a vertex being non-low is a much weaker condition than  being high. 
		
		Furthermore, for $i\leq \ell$ we will denote by $L_i$ the set of vertices of degree at most $i$. So, $L_{\ell-1}=L$. We will say a vertex $v\in L$ is $i$-{\em flat} if  the total number of vertices in $M$ in all edges containing $v$ (counted with multiplicities) is exactly $i$, and $v\in L$ is $i^+$-flat if $v$ is $j$-flat for some $j\geq i$.
		
		One of the first results we will prove (Lemma~\ref{lemma x and y in H}) implies that there is a set of two vertices, $x$ and $y$ such that almost all vertices in $L$ that are not adjacent to at least two vertices in $H$ are in $N(\{x,y\})$. Among other things, this implies that almost all vertices in $L$ are adjacent to at least one vertex in $H$, which will be very helpful.
		
		\subsection{Simple discharging rules that do not work}
		
		To motivate the rest of the proof, it is useful to mention a simple discharging scheme that does not work, but  is the basis for the more complicated discharging rules we use.
		
		The following simple discharging rules  do not create or destroy charge, they simply move it (we add an asterisk to the label for these rules because they are not the final discharging rules we use, and are instead simply a heuristic to motivate our final rules):
		
		\begin{itemize}
			\item[(D1*)] Every $(M,L,L)$ edge $\{h,a,b\}$ with $h\in M$ removes  charge $1$ from $h$ and gives charge  $1/2$  to each of $a$ and $b$.
			\item[(D2*)] Every $(M,M,L)$ edge $\{h_1,h_2,a\}$ with $a\in L$ removes charge  $1$ from each of $h_1$ and $h_2$, and gives charge $2$ to $a$.
		\end{itemize}
	
		Under this discharging scheme, every vertex in $M$ ends up with non-negative charge while vertices in $L$ only receive extra charge beyond the initial charge they had from their degree, making them more likely to end up above charge $4$. However, it is possible that many vertices in $L$ do not get up to charge $4$. 
		
		Now by this simple discharging scheme, every $i$-flat vertex in $L$ gets extra charge at least $i/2$ from vertices in $M$, but without more information, we cannot guarantee a vertex gets any more than this. Aside from $0$-flat vertices (which we already know there are few of by Lemma~\ref{lemma x and y in H}), this  scheme leaves the following types of vertices in $L$ below charge $4$:
		
		\begin{itemize}
			\item $1$-flat  $3$-vertices,
			\item $1$-flat  $2$-vertices,
			\item $2$-flat  $2$-vertices, and
			\item  $1$-vertices and  $0$-vertices.
		\end{itemize}
		We will show that there are very few $1$-vertices and $0$-vertices. However, it is possible that $G$ contains $\Omega(n)$ vertices of any of the other three types listed above. To deal with this, we introduce more complicated discharging rules, both refining the rule (D1*) (i.e. still moving charge from vertices in $M$ to vertices in $L$, but possibly not splitting charge evenly among the low vertices in an $(M,L,L)$ edge), as well as introducing some rules which will move charge from vertices in $L$ that have excess charge to other vertices in $L$ that are not satisfied by their non-low neighbors.
		
		\subsection{Overview of how to deal with \texorpdfstring{$1$}{2}-flat \texorpdfstring{$3$}{2}-vertices}
		
		We wish to enact a discharging rule such as
		\begin{itemize}
			\item[(D1.1*)] Every $(M,L,3)$ edge $\{h,a,b\}$ with $h\in H$, $a\in L$ and $b$ a $1$-flat  $3$-vertex removes charge $1$ from $h$ and gives charge $1$ to $b$.
		\end{itemize}
	
		If this rule were well-defined, then every $1$-flat $3$-vertex would receive an extra charge  $1$, bringing them up to charge $4$. Thus, the goal of Section~\ref{section 3 vertex} will be to show that a rule very similar to this is well-defined, and that the rule does not cause any other vertices to end up with too little charge. In particular, we will establish claims that essentially imply the following:
		\begin{itemize}
			\item There are very few $(M,3,2)$ edges,
			\item There are very few $(M,3,3)$ edges containing either two $1$-flat  $3$-vertices or a $1$-flat  $3$-vertex and a $2$-flat  $3$-vertex, and
			\item There are very few $3$-flat $3$-vertices that are in two or more $(M,3,3)$ edges with $1$-flat $3$-vertices.
		\end{itemize}
		Together these statements will imply that for almost all $1$-flat $3$-vertices, a rule like (D1.1*) will leave them with charge $4$, while not having many $2^+$-flat $3$-vertices end up with charge less than $4$.
		
		\subsection{Overview of how to deal with  \texorpdfstring{$2$}{2}-vertices}
		
		The simple discharging scheme presented above leaves both $1$-flat and $2$-flat $2$-vertices with charge less than $4$. We again wish to refine the rule (D1*) to prioritize giving  $2$-vertices more charge. For example, we wish to enact a discharging rule similar to
		\begin{itemize}
			\item[(D1.2*)] Every $(M,L,2)$ edge $\{h,a,b\}$ with $h\in M$, $a\in L$, and $d(b)=2$ removes charge $1$ from $h$ and gives charge $1$ to $b$.
		\end{itemize}
		As with rule (D1.1*), it is not obvious that this rule is well-defined, and also it is possible this rule conflicts with (D1.1*). However we will show that there are very few $(M,2,2)$ or $(M,3,2)$ edges, which implies that for almost all  $2$-vertices, (D1.2*) is well-defined and does not conflict with (D1.1*).
		
		(D1.2*) is enough for $2$-flat $2$-vertices to get charge $4$, but $1$-flat $2$-vertices would still be left with only  charge $3$. To deal with this, we  need to move charge from vertices in $L$ to $1$-flat $2$-vertices. In Section~\ref{section 2 vertex}, we will define the concept of a low vertex being ``helpful'', which essentially will imply that these low vertices have enough charge to give $1/2$  to each of their $1$-flat $2$-neighbors (with multiplicity), while still being satisfied. This will allow us to enact a discharging rule like
		\begin{itemize}
			\item[(D3*)] Every $(V,L,2)$ edge $\{z,a,b\}$ with $a\in L$ being a ``helpful'' vertex and $b$ being a $1$-flat $2$-vertex will remove charge $1/2$  from $a$ and give charge $1/2$  to $b$.
		\end{itemize}
		If we could show that almost every edge containing a $1$-flat $2$-vertex also contained a helpful vertex, (D3*) would be enough to satisfy almost all $1$-flat $2$-vertices, since they would get an extra charge $1/2$ from each of their two edges, bringing them up to charge $4$. Unfortunately, it is unclear if this is true. 
		
		To deal with this, we use a second round of moving charge from vertices in $L$ to $1$-flat  $2$-vertices, this time asking for a little less from the ``helping'' vertex. In particular, we define ``half-helpful'' vertices, which essentially are vertices in $L$ that may not have had enough charge to be ``helpful'', but can give $1/2$ to all the $1$-flat $2$-neighbors that weren't already satisfied by the charge moved via (D3*). This leads us to a rule like
		\begin{itemize}
			\item[(D4*)] Every $(V(G),L,2)$ edge $\{z,a,b\}$ with $a\in L$ being ``half-helpful'', and $b$ being a $1$-flat $2$-vertex that still does not have charge $4$ after the implementation of all previous rules, takes charge $1/2$ from $a$ and gives charge $1/2$ to $b$.
		\end{itemize}
	
		In order to make sure that (D3*) and (D4*) are well-defined and satisfy almost all  $2$-vertices, we will need to prove claims that essentially imply the following:
		\begin{itemize}
			\item There are almost no $(L,2,2)$ edges, and
			\item Every $(V,L,2)$ edge that contains a $1$-flat $2$-vertex also contains a ``helpful'' or ''half-helpful'' vertex.
		\end{itemize}
	All of this is done in Section~\ref{section 2 vertex}.
		
		\subsection{One More Discharging Rule}
	
	% This notation is useful because for every non-edge $\{u,v,w\}$, $G$ contains  a {$u,v$-link} or  a {$u,w$-link} or a {$v,w$-link}. A pair $uv$ of vertices in $G$ is {\em good} if $G$ contains a  {$u,v$-link}.
	
	In order to guarantee that we have enough ``helpful'' and ``half-helpful'' vertices in $G$, we will further refine (D1*) to move charge away from vertices that we expect to have a lot of extra charge (say vertices of degree more than $8$) to other vertices. We will classify $(M,L,L)$ edges as ``rich'' and name one of the low vertices in every ``rich'' edge as the ``recipient'' if we expect the second vertex of this ``rich'' edge to be ``helpful'' even without charge from this edge (for example if a vertex is degree $8$ or more). Thus, we will enact a rule like:
	
		\begin{itemize}
			\item[(D1.3*)] Every ``rich'' $(M,L,L)$ edge $\{h,u,v\}$ with $h\in H$ and recipient $v$ removes  charge $1$ from $h$ and gives  charge $1$ to $v$.
		\end{itemize}
		
		These heuristic discharging rules capture most of the main ideas of the proof. In Section~\ref{section discharging}, we go through the actual discharging scheme we use.
		
		The final main idea in our proof that has not been mentioned here is the use of ``garbage sets'', i.e. small sets of vertices with numerous ``bad'' properties, which we will show we can largely ignore. In particular, throughout this heuristic section we have used the phrase ``almost all'' quite loosely. In order to rigorously show that $n-o(n)$ vertices end up with charge at least $4$, we will define a sequence of $10$ ``garbage sets'', $R_1, R_2,\dots, R_{10}$. We will show that the union of these sets is $o(n)$ and that all vertex in $L$ outside of these sets indeed end up with charge at least $4$  after the discharging rules take place.
		
		As we define the ``garbage sets'' $R_i$, we often want to exclude not just the vertices in $R_i$, but low vertices that are neighbors of $R_i$ as well. Some of these ``garbage sets'' could be around size $n/\ell$, so we may not be able to include all low neighbors if a ``garbage set'' contains any vertices of degree close to $\ell$. We will however usually include all low neighbors of the vertices of degree at most $8$ inside a ``garbage set''. To that end, given a garbage set $R_i$, we will usually define a set $\mathbf{R}_i$ that contains all the previously defined garbage sets, and then a set $\mathbf{R}_i'$ which contains $\mathbf{R}_i$, along with any vertices in $L$ that have a neighbor of degree at most $8$ in $\mathbf{R}_i$.

	\section{Lower Bound: Preliminary Lemmas}\label{section preliminary lemmas}
	
		\begin{lemma}\label{lemma x and y in H}
		Let $Q\subseteq N(H)$ denote the set of  vertices which have at least two neighbors in $H$ and let $S=V(G)\setminus (Q\cup H)$. There exist two vertices, $x,y\in V(G)$ with $x\in H$ such that 
		\[
		|S\setminus N(\{x,y\})|\leq 20 n/\ell.
		\]
	\end{lemma}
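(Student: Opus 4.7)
The plan is to use the saturation property, together with a link-counting argument, to show that $S$ is concentrated on the neighborhoods of at most two high-degree vertices. First I would bound, for each $u \in S \cap L$, the number of $w \in V(G)$ for which $\{u,w\}$ is a good pair. Since any $\{u,w\}$-link has center $c \in N(u)$ and each such $c$ contributes at most $2(d(c)-1)$ candidate endpoints $w$, summing over $c \in N(u)$ — using that $|N(u)| \le 2\ell$, that $u$ has at most one high neighbor (because $u \in S$), and that non-high neighbors have degree below $n/\ell^2$ — gives
\[
\#\{w \in V(G) : \{u,w\}\text{ is a good pair}\} \le 4n/\ell + 2 d(h_u) + O(\ell^2),
\]
where $h_u$ is the (potentially nonexistent) unique high neighbor of $u$, with $d(h_u) = 0$ in that case.

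Next, the saturation property applied to each non-edge $\{u,v,w\}$ with $\{u,v\}$ a bad pair in $S \cap L$ forces, for all but at most $d(uv) \le \ell$ choices of $w$, a $\{u,w\}$- or $\{v,w\}$-link avoiding the third vertex. Combining with the bound above yields
\[
d(h_u) + d(h_v) \ge n/2 - O(n/\ell),
\]
so at least one of $h_u, h_v$ belongs to the set $\mathrm{HH}$ of \emph{very high} vertices (those of degree at least $n/4 - O(n/\ell)$). The total-degree bound $\sum_v d(v) = 3|E(G)| \le 6n$ then forces $|\mathrm{HH}|$ to be bounded by an absolute constant. I would then apply a good-pair counting argument to the set $U$ of $u \in S \cap L$ whose high neighbor (if any) is not in $\mathrm{HH}$: every pair in $U$ is good; the center of such a pair either coincides with a common high neighbor $h_u = h_v \in H \setminus \mathrm{HH}$ (so $u,v$ lie in the same $T_h$-class) or lies in $L \cup (M \setminus H)$. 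Bounding the latter by $\sum_{c \in L \cup (M \setminus H)} 2 d(c)^2 = O(n^2/\ell^2)$ and rearranging yields either $|U| = O(n/\ell)$ or a concentration of $U$ inside $N(y^*)$ for a single $y^* \in H \setminus \mathrm{HH}$. A parallel argument on $T_0$ alone (all pairs good, all centers in $L \cup (M \setminus H)$) gives $|T_0| = O(n/\ell)$.

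To finish, I would choose $x \in H$ to be a very high vertex when one exists (otherwise $|S|$ itself is already $O(n/\ell)$ and any choice works) and $y$ according to the concentration statement above. In the subcases $|\mathrm{HH}| \le 2$, taking $x,y \in \mathrm{HH}$, or $y = y^*$ when $U$ dominates the uncovered residual, gives $|S \setminus N(\{x,y\})| \le 20n/\ell$, since only the exceptional pieces $T_0$, residual $U$, and $S \cap M$ remain uncovered, each of size $O(n/\ell)$. The main obstacle will be the case $|\mathrm{HH}| \ge 3$: up to a bounded number of very-high hubs may each carry a substantial $T_h$, which cannot all be covered by only two vertices. The plan for overcoming this is to prove an auxiliary structural claim that at most two $h \in \mathrm{HH}$ satisfy $|T_h| = \omega(n/\ell)$, obtained by refining the link-counting bound applied to pairs across distinct $\mathrm{HH}$-classes — forcing substantial neighborhood overlap between very-high hubs and thereby preventing more than two "independent" large $T_h$'s.
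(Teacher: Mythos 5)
Your approach is genuinely different from the paper's. The paper rules out a bad tri-partition of $S$ and $H$ by counting good pairs across the parts, using that such links cannot have centers in $H$; a case analysis on high-degree vertices then extracts $x,y$ directly. You instead work pair-by-pair, deriving from each bad pair $\{u,v\}\subseteq S\cap L$ the inequality $d(h_u)+d(h_v)\ge n/2 - O(n/\ell)$, and then use a ``very high degree'' threshold and pigeonhole to isolate a bounded hub family $\mathrm{HH}$ plus one extra hub $y^*$. The component calculations (the per-vertex good-pair bound, the bad-pair degree inequality, the $O(n^2/\ell^2)$ bound on good pairs whose link-center lies outside $H$) are all sound.

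The gap is the endgame, and it is wider than you acknowledge --- it already bites at $|\mathrm{HH}|=2$. When $\mathrm{HH}=\{h_1,h_2\}$ and $|U|$ is large with dominant $T_{y^*}$ for $y^*\notin\mathrm{HH}$, you may need to cover three large sets ($T_{h_1}$, $T_{h_2}$, $U\cap N(y^*)$) with two vertices, and your scheme ``take $x,y\in\mathrm{HH}$, or take $y=y^*$'' doesn't do that. For $|\mathrm{HH}|\ge 3$, the proposed auxiliary claim does not follow from the degree inequality: when both $h_u,h_v\in\mathrm{HH}$, the bound $d(h_u)+d(h_v)\ge n/2-O(n/\ell)$ is simply satisfied, not contradicted, so it gives you no leverage to force ``neighborhood overlap'' between hubs.

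The missing idea is that your step-1 estimate is really a \emph{set} inclusion, not just a cardinality bound: $\{w : uw \text{ good}\}\subseteq N(h_u)\cup E_u$ with $|E_u|=O(n/\ell)$, since the low and mid-range ($M\setminus H$) centers reach only $O(n/\ell)$ endpoints altogether. Hence for any bad pair $\{u,v\}\subseteq S\cap L$ with $h_u\ne h_v$ (or one absent), saturation gives
\[
V(G)\setminus\bigl(N(h_u)\cup N(h_v)\bigr)\;\subseteq\; E_u\cup E_v\cup\{u,v\}\cup\{w:\{u,v,w\}\in E(G)\},
\]
a set of size $O(n/\ell)$, so $x=h_u$, $y=h_v$ prove the lemma outright, with no reference to $\mathrm{HH}$. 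Thus either some such bad pair exists (done), or \emph{every} pair in $S\cap L$ without a common high neighbor is good with center outside $H$, and your $O(n^2/\ell^2)$ count then forces $S\cap L$ to lie within $O(n/\ell)$ of a single $T_h$; take $x=h$ and $y$ arbitrary. With this observation your argument closes and the $\mathrm{HH}$ machinery becomes unnecessary; without it, the proposal as written does not establish the lemma.
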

	
	\begin{proof}
		We will prove a slightly stronger statement, which implies our result. Namely, we will prove the following: If $|S|\geq 20n/\ell$, then there either exists
		\begin{itemize}
			\item a pair of vertices $x,y\in H$ such that $|S\setminus N(\{x,y\})|\leq 4n/\ell$, or
			\item a single vertex $x\in H$ such that $|S\setminus N(\{x\})|\leq 8n/\ell$.
		\end{itemize}
		
		Let $\delta:=4/\ell$. We claim that there are no partitions $S=S_1\cup S_2\cup S_3$ and $H=H_1\cup H_2\cup H_3$ such that $|S_1|\geq|S_2|\geq |S_3|\geq \delta n$, and such that $S_i\cap N(H\setminus H_i)=\emptyset$ for $i\in [3]$.
		
		Indeed, if we consider the $|S_1|\cdot|S_2|\cdot|S_3|:=s\geq \delta^3n^3=\omega(n^2)$ triples that contain one vertex in each set $S_i$, $s-2n=(1+o(1))s$ of these must be non-edges, and thus must contain a good pair. Let $uv$ be one such good pair. Then $uv$ can cover at most $\max\{|S_1|,|S_2|,|S_3|\}=|S_1|$ of the $(1+o(1))s$ non-edges, so there must be at least 
		\begin{equation}\label{equation good edge lower bound}
			(1+o(1))|S_2||S_3|\geq (1+o(1))\delta^2 n^2
		\end{equation}
		good pairs with endpoints in different $S_i$'s. Note that no vertices in $H$ can serve as the center of a $u,v$-link connecting any of these good pairs since the pairs contains vertices in two different $S_i$'s, and since each other vertex $v$ can be the center of a link including at most $4\binom{d(v)}2$ other vertices, we have that there are at most
		\begin{equation}\label{equation good edge upper bound}
			\sum_{v\in V(G)\setminus H}4\binom{d(v)}2\leq \frac{2}{\ell^2} n \sum_{v\in V(G)\setminus H} d(v)\leq \frac{6}{\ell^2} n\cdot|E(G)|\leq \frac{12}{\ell^2} n^2,
		\end{equation}
		such good pairs, where the last inequality follows from Theorem~\ref{theorem upper bound}.  However, this and the value of $\delta$ contradict \eqref{equation good edge lower bound}, so no such partition could exist.
		
		Now, if there exist two vertices $x,y\in H$ with $|N(x)\cap S|,|N(y)\cap S|\geq \delta n$, then 
		\[
		|S\setminus N(\{x,y\})|<\delta n,
		\]
		since otherwise $H=\{x\}\cup\{y\}\cup (H\setminus\{x,y\})$ would give us a partition that we know does not exist, so in this case we are done. 
		
		If there is exactly one vertex $x\in H$ with $|N(x)\cap S|\geq \delta n$, then $|S\setminus N(x)|\leq 3\delta n$, since otherwise we could partition $H\setminus \{x\}$ into two sets $A_1,A_2$ such that $\delta n\leq |N(A_1)\cap S|\leq 2\delta n$ by starting with all the vertex of $H\setminus\{x\}$ in $A_2$, and then moving them one at a time into $A_1$ until the first time the inequality is satisfied. This further implies that $|N(A_2)\cap S|\geq \delta n$, so $H=\{x\}\cup A_1\cup A_2$ would be a partition that we know does not exist. 
		
		Finally, if no vertices in $H$ are adjacent to at least $\delta n$ vertices in $S$, then we can  find a partition $H=A_1\cup A_2\cup A_3$ with $\delta n\leq |A_i|\leq 2\delta n$ for $i=1,2$ and consequently $|A_3|\geq \delta n$, again by starting with all the vertex in $H$ in $A_3$, moving them one at a time into $A_1$ until the first inequality is satisfied, then one at a time into $A_2$ until the second is satisfied, giving us a partition we know does not exist.
	\end{proof}
	
	Let $\mathbf{R}_1=R_1$ be the set of vertices of degree at most $8$ in the set $S\setminus N(\{x,y\})$ as defined in Lemma \ref{lemma x and y in H}. Let  $\mathbf{R}_1'= L\cap N[\mathbf{R}_1]$.
	
	\begin{remark}\label{remark size of R1}
		Since $\mathbf{R}_1\subseteq L_8$,  Lemma~\ref{lemma x and y in H} yields
		$
		|\mathbf{R}'_1|\leq 17 |\mathbf{R}_1|\leq 340 n/\ell.
		$
	%	When $\epsilon\leq n/\ell^2$, this is at most $340 n/\ell$.
	\end{remark}
	
%	\begin{proof}
%		The fact that $|R_1|\leq 20\epsilon^{1/2}n$ follows from Lemma \ref{lemma x and y in H} since $R_1\subseteq S\setminus %N(\{x,y\})$. Then the bound on $R_1'$ follows since $R_1'\subseteq N[R_1]$, and $|N[R_1]|\leq 2\cdot 8\cdot |R_1|+|R_1|$.
%	\end{proof}
	
	We now focus on vertices that are in pairs with high codegree relative to their degree.
	
	\begin{claim}\label{claim degree d d neighbor}
	   Let $u,v\in V(G)$. If $d(uv)=d(v)\leq |V(G)|-3$, then $d(u)\geq d(v)+2$. Furthermore, if $d(uv)=d(v)=2$, then $v$ is the only degree $2$ double neighbor  of $u$.
	\end{claim}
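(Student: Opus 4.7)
The strategy is to use saturation: for each non-edge $e^*$, the forced $C_3^{(3)}$ triangle $T = \{e_1, e_2, e_3\}$ in $G+e^*$ (with $e_1 = e^*$) gives structural constraints, and the hypothesis $d(uv)=d(v)$ means every $G$-edge through $v$ contains $u$. Hence whenever $v \in e_1$ and some $G$-edge $e_i$ ($i\in\{2,3\}$) also contains $v$, then $u \in e_i$ as well; combined with $u$ already being in $e_1$ (if $u \in e_1$), or with another vertex of $e_1$ also lying in $e_i$, this typically forces $|e_1 \cap e_i| \geq 2$, which is forbidden in a $C_3^{(3)}$. I will use this central observation repeatedly.

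For the first part, I argue by contradiction: suppose $d(u) \leq d(v)+1$, and write the $d(v)$ edges through $v$ as $\{u,v,w_i\}$ for $i = 1,\ldots,d(v)$. First, pick $z \in V(G) \setminus N[v]$, which exists because $d(v) \leq n-3$, and consider the non-edge $\{u,v,z\}$. Any $G$-edge of $T$ containing $v$ would also contain $u \in e_1$, so $|e_1 \cap e_i| \geq 2$ — a contradiction; hence $v$ is a leaf of $e_1$ and $u, z$ are the two cores. Then $e_2 \in E(G)$ contains $u$ but not $v$, giving $d(u) \geq d(v)+1$. To promote to $d(u) \geq d(v)+2$, suppose for contradiction that there is exactly one such edge, $f = \{u,a,b\}$. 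Then $e_2 = f$, and the analysis further forces $e_3 \in E(G)$ to contain $z$, to contain one of $\{a,b\}$, and to avoid $u,v$. Specializing to $z = a$ or $z = b$ (when applicable) pins down further edges of the form $\{a,b,\star\}$, and combining these constraints with the triangle forced by a secondary non-edge — such as $\{v,a,b\}$ — is intended to yield a double-intersection contradiction inside some triangle.

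For the second part, assume for contradiction that $v \neq v'$ are both degree-$2$ double neighbors of $u$, so all their $G$-edges contain $u$. Apply the central observation to the triple $\{u,v,v'\}$: if it were a non-edge, then no $G$-edge of $T$ could contain $v$ (it would also contain $u \in e_1$, forcing $|e_1 \cap e_i| \geq 2$), and the same for $v'$, so both would be leaves of $e_1$; but $e_1$ has a single leaf. Hence $\{u,v,v'\} \in E(G)$, and $v$'s other edge is $\{u,v,\beta\}$ for some $\beta \neq u,v,v'$, with the symmetric statement for $v'$. Now consider the non-edge $\{v,v',\beta\}$: each of $v$'s two $G$-edges shares two vertices with it, so no $G$-edge of $T$ can contain $v$, and likewise for $v'$; thus both $v, v'$ must be leaves of $e_1$ — the final contradiction.

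The main obstacle is the sub-step of the first part where $u$ has exactly one edge outside $N(v)$: the single non-edge $\{u,v,z\}$ immediately yields $d(u) \geq d(v)+1$, but ruling out this ``off by one'' configuration requires combining the constraints from several non-edges, and the right secondary non-edge is not automatic from the central observation. The remaining steps (the case where $u$ has no edges outside $N(v)$, and all of the second part) are direct applications of the central observation.
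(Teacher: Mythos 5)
Your ``central observation'' is exactly the right mechanism, and it is the same one the paper uses throughout: whenever $u,v$ lie in the added triple $e^*$ and $d(uv)=d(v)$, any $G$-edge of the forced triangle through $v$ must also contain $u$, giving a forbidden double intersection with $e^*$. Your derivation of $d(u)\geq d(v)+1$ from the single non-edge $\{u,v,z\}$ with $z\notin N[v]$ is correct and subsumes the paper's first sub-case. However, both of the remaining steps have genuine gaps.

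\textbf{First part, ruling out $d(u)=d(v)+1$.} You correctly identify that when $a,b\in N(v)$ one should add the secondary non-edge $\{v,a,b\}$ (the paper's $\{v,b_1,b_2\}$), but the phrase ``is intended to yield a double-intersection contradiction inside some triangle'' does not describe what actually happens, and the argument is not carried out. The paper's contradiction is of a different kind: the forced triangle in $G+\{v,b_1,b_2\}$ cannot have $b_1,b_2$ as its cores (any $b_1,b_2$-link avoiding $v$ also avoids $u$), so $v$ is a core, whence some $v,b_1$-link has edges $\{u,v,a_i\}$ and $\{a_i,b_1,z\}$ with $a_i\notin\{b_1,b_2\}$ --- and then $\{u,b_1,b_2\}$, $\{u,v,a_i\}$, $\{a_i,b_1,z\}$ is a $C_3^{(3)}$ that was already present in $G$. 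The contradiction is a hidden triangle in $G$ using the known edge $f$, not a double intersection within the $G+e$ triangle. You acknowledge this sub-step is not closed, and it is indeed the substantive step.

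\textbf{Second part.} Your argument through $\{u,v,v'\}\in E(G)$ is correct and matches the paper. The problem is the single secondary non-edge $\{v,v',\beta\}$. Writing $v$'s second edge as $\{u,v,\beta\}$ and $v'$'s as $\{u,v',\beta'\}$, the edge $\{u,v',\beta'\}$ meets $\{v,v',\beta\}$ in $\{v'\}$ only --- a single vertex --- whenever $\beta'\neq\beta$. So your claim ``and likewise for $v'$'' fails precisely in that case, and $v'$ can still be a core vertex of the forced triangle; no contradiction is obtained. The paper splits into cases: when $\beta'=\beta$ your non-edge works (this is their $v'=w'$ case), but when $\beta'\neq\beta$ they add a different non-edge, namely $\{u,v,\beta'\}$ (their $\{u,v,w'\}$), show $v$ cannot be a core so there is a $u,\beta'$-link avoiding $v$, and then argue this link must pass through $v'$ while no edge of $v'$ can serve, yielding the contradiction. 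As written, your proof of the second part is incomplete.
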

	
	\begin{proof} 
		First,  assume  that $d(uv)=d(v)=d$ while $d(u)\leq d+1$. If $d(u)=d$, then for any $w\in V(G)\setminus N(u)$ the non-edge $\{u,v,w\}$ intersects every edge containing $u$ or $v$ in two vertices, so $G+\{u,v,w\}$ does not contain a $C_3^{(3)}$, a contradiction. Thus, we can assume $d(u)=d+1$. 
		
		Suppose $\{u,v,a_1\},\{u,v,a_2\},\dots,\{u,v,a_d\}\in E(G)$, and let $\{u,b_1,b_2\}$ be the single edge that contains $u$ but not $v$. If $b_1\not\in \{a_1,\dots,a_d\}$, then consider the non-edge $\{u,v,b_1\}$. This non-edge intersects every edge containing $u$ or $v$ in two vertices, so again $G+\{u,v,b_1\}$ is $C_3^{(3)}$-free, again a contradiction. Thus, we may assume $b_1\in\{a_1,\dots,a_d\}$, and similarly $b_2\in \{a_1,\dots,a_d\}$.
		
		Now consider the non-edge $\{v,b_1,b_2\}$ and let $T$ be the $C_3^{(3)}$ in $G+\{v,b_1,b_2\}$. A  $b_1,b_2$-link that avoids $v$ would  also avoid $u$ since $v$ is in every edge that $u$ is in except $\{u,b_1,b_2\}$, so $\{b_1,b_2\}$ cannot be 
		the set of core vertices of $T$,  thus $v$ is a core vertex. Without loss of generality, assume $b_1$ is the second core vertex. Then any $v,b_1$-link must use some edge $\{u,v,a_i\}$ where $a_1\not\in\{b_1,b_2\}$, and  a second edge $\{a_i,b_1,z\}$ for some $z\not\in \{u,v,b_1,b_2,a_i\}$. But then $\{u,b_1,b_2\}$, $\{u,v,a_i\}$ and $\{a_i,b_1,z\}$ form a $C_3^{(3)}$ in $G$, a contradiction. Thus, $d(u)\geq d(v)+2$.
		
		Now  assume that $d(v)=2$ and  that there exists a vertex $w$ with $d(uw)=d(w)=2$ as well. If $\{u,v,w\}\not\in E(G)$, then adding it cannot create a $C_3^{(3)}$ since this non-edge intersects all edges containing $v$ and $w$ in two vertices. Thus, we may assume $\{u,v,w\}$ is an edge of $G$.
		
		Let $\{u,v,v'\}$ and $\{u,w,w'\}$ be the other edges containing $v$ and $w$, respectively. If $v'=w'$, then the non-edge $\{v,w,v'\}$ again intersects all edges containing either $v$ or $w$ in two vertices, another contradiction. Thus we have  $v'\neq w'$.
		
		Now consider the non-edge $\{u,v,w'\}$. As this non-edge intersects every edge containing $v$ in two vertices, there 
		is a $u,w'$-link that avoids $v$. To avoid a $C_3^{(3)}$ in $G$ with the edge $\{u,w,w'\}$, $w$ must be contained in this link,
		 but every edge containing $w$ intersects $\{u,v,w'\}$ in two vertices, which is a contradiction. This proves the claim.
		 %us, $u$ cannot have two double-neighbors of degree $2$.
	\end{proof}
	
	\begin{claim} \label{Claim 2 deg 2}
		Let $v\in V(G)$. No edge $e\in E(G)$ not containing $v$ can have two $2$-vertices in $N(v)$. 
	\end{claim}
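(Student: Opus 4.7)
My plan is to argue by contradiction. Suppose $e = \{a,b,c\}$ is an edge of $G$ not containing $v$, with $a, b \in N(v)$ both of degree $2$. Since each of $a, b$ has degree $2$, is adjacent to $v$, and lies in $e$ which avoids $v$, the second edge through $a$ must contain $v$, and similarly for $b$; write them as $e_a = \{v, a, x\}$ and $e_b = \{v, b, y\}$. The key observation underlying the proof is this: if a $2$-vertex $u$ lies in a non-edge $f$, then $u$ can play the role of a core vertex of a $C_3^{(3)}$ in $G + f$ only if one of $u$'s two edges in $G$ meets $f$ in exactly the singleton $\{u\}$, since any two edges sharing a core vertex in $C_3^{(3)}$ intersect only in that vertex. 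Because each edge of a $C_3^{(3)}$ must contain two core vertices, any added triple $f$ in which both $a$ and $b$ are ``trapped'' (i.e., every edge of $G$ through them shares at least two vertices with $f$) leaves only the third vertex of $f$ as a core-candidate, which is too few, contradicting the saturation of $G$.

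The plan is then to split into two cases depending on whether $\{v, a, b\}$ is an edge of $G$. If $\{v, a, b\} \in E(G)$, then it must coincide with both $e_a$ and $e_b$, forcing $x = b$ and $y = a$, so every edge through $a$ or through $b$ contains the pair $\{a, b\}$. Choosing any $d \notin \{v, a, b, c\}$ (which exists since $n$ is large), the triple $\{a, b, d\}$ is a non-edge in which $a$ and $b$ are both trapped, so only $d$ could be a core vertex of a $C_3^{(3)}$ using $\{a, b, d\}$, giving the desired contradiction. If instead $\{v, a, b\} \notin E(G)$, I use $f = \{v, a, b\}$ itself: every edge through $a$ is either $e$ (containing $b$) or $e_a$ (containing $v$), and symmetrically for $b$, so both $a$ and $b$ are trapped and only $v$ remains as a core-candidate, again contradicting saturation.

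The main obstacle is not computational but conceptual: isolating the ``trapped $2$-vertex'' principle and, in each subcase, choosing the right non-edge $f$ so that both given $2$-vertices are trapped. Once these choices are made, the contradictions are immediate from the combinatorics of $C_3^{(3)}$ and the definition of saturation; no further estimates or case analysis is needed.
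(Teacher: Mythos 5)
Your proof is correct. The core mechanism is the same as in the paper: add a carefully chosen non-edge $f$ in which every edge of $G$ through each of the two $2$-vertices shares at least two vertices with $f$, so neither can serve as a core vertex, leaving at most one candidate core vertex inside $f$ — contradicting saturation. The difference is how you handle the possibility that $\{v,a,b\}$ is already an edge. The paper rules this out immediately by invoking Claim~\ref{claim degree d d neighbor} (two $2$-vertices cannot be double neighbors) and then always adds $\{v,u_1,u_2\}$; you instead keep the case $\{v,a,b\}\in E(G)$ alive and dispatch it by choosing a fresh vertex $d$ and adding $\{a,b,d\}$, noting both edges through $a$ and through $b$ then contain $\{a,b\}$. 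Your version is marginally longer (two non-edges to consider instead of one) but has the virtue of being entirely self-contained, not depending on the earlier co-degree claim; the paper's version is shorter at the cost of that dependency. Both are sound.
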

	
	\begin{proof} Suppose to the contrary that $e=\{u_1,u_2,u_3\}\in E(G)$, where $u_1,u_2\in N(v)$ and $d(u_1)=d(u_2)=2$. By definition, for $i=1,2,$ there is an edge $f_i=\{v,u_i,w_i\}$. By Claim~\ref{claim degree d d neighbor}, $w_i\neq u_{3-i}$ since  $2$-vertices cannot be double neighbors with each other. 
		
		Let $g=\{v,u_1,u_2\}$. Since $d(u_1)=2$  and  $u_1\in e\cap f_1$, $g\not\in E(G)$. Thus $G+g$ has a copy of $C_3^{(3)}$, $\mathcal{T}$ that contains the edge $g$. The edge $g$ intersects all edges containing $u_1$ and $u_2$ in two vertices, so neither of these vertices can be core vertices in $\mathcal{T}$, a contradiction.
	\end{proof}
	
	\begin{claim}\label{claim good pair double neighbor}
		If $e=\{a,b,c\}$ is an edge in $G$, and  $ab$ is a good pair, then $c$ is a double neighbor  of  $a$ or $b$.
	\end{claim}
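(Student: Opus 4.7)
The plan is to exploit the $a,b$-link directly together with the edge $e$ to build a forbidden $C_3^{(3)}$ unless $c$ already has to be an extra common neighbor of $a$ or $b$. Since $ab$ is a good pair, there exists an $a,b$-link consisting of edges $f_1$ and $f_2$ whose unique common vertex is the center $p$. Writing $f_1=\{a,p,x\}$ and $f_2=\{b,p,y\}$, the 2-edge loose path requirement $|f_1\cap f_2|=1$ forces $a,b,p,x,y$ to be five distinct vertices (in particular $a\neq y$ and $b\neq x$).

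The key observation is that the three edges $e$, $f_1$, $f_2$ are, up to the location of $c$, already arranged as a triangle on core vertices $a, b, p$. Indeed, $f_1\cap f_2=\{p\}$ by hypothesis; and regardless of where $c$ sits, $a\in e\cap f_1$ and $b\in e\cap f_2$. So I just need to control the other possible intersections. The intersection $e\cap f_1$ is forced to equal $\{a\}$ unless $c\in f_1$, that is, unless $c\in\{p,x\}$; likewise $e\cap f_2=\{b\}$ unless $c\in\{p,y\}$.

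Thus if $c\notin\{p,x,y\}$, then the three edges $e$, $f_1$, $f_2$ pairwise intersect in exactly one vertex and those three common vertices $a,b,p$ are distinct, producing a copy of $C_3^{(3)}$ in $G$. This contradicts the fact that $G$ is $C_3^{(3)}$-free. I therefore conclude $c\in\{p,x,y\}$, and I handle the three subcases:
\begin{itemize}
    \item If $c=p$, then $f_1=\{a,c,x\}\neq e$ (since $x\neq b$) contains both $a$ and $c$, so $d(ac)\geq 2$.
    \item If $c=x$, then $f_1=\{a,p,c\}\neq e$ (since $p\neq b$) contains both $a$ and $c$, so $d(ac)\geq 2$.
    \item If $c=y$, then $f_2=\{b,p,c\}\neq e$ (since $p\neq a$) contains both $b$ and $c$, so $d(bc)\geq 2$.
\end{itemize}
In each case $c$ is a double neighbor of $a$ or of $b$, as required.

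The only real subtlety here is being precise about what a link is and why $a,b,p,x,y$ must be five distinct vertices; once that is set up, no use is made of saturation, only of the $C_3^{(3)}$-freeness of $G$, and the argument is a single case analysis. I do not foresee any genuine obstacle beyond bookkeeping the non-equalities forced by the loose-path condition.
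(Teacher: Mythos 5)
Your proof is correct and follows the same approach as the paper: take an $a,b$-link, observe that if it avoids $c$ it closes with $e$ into a forbidden $C_3^{(3)}$, and otherwise read off the double-neighbor conclusion. You simply spell out the distinctness bookkeeping (that $a,b,p,x,y$ are distinct and hence the three pairwise intersections are distinct singletons) that the paper leaves implicit.
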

	
	\begin{proof}
		If $ab$ is a good pair, then there is an $a,b$-link. If this link does not contain $c$, then together with $e$ they form a $C_3^{(3)}$ in $G$, a contradiction. Thus, one of the edges of the loose path contains $c$, which implies $c$ is a double neighbor  of  $a$ or $b$.
	\end{proof}

	Let $R_2$ be the set of vertices $z\in L_8\setminus \mathbf{R}'_1$ such that there exists some $h\in M$ where $zh$ is a bad pair and $N(z)\setminus \{h\}\subseteq L$. Let $\mathbf{R}_2=R_2\cup \mathbf{R}_1' $, and  let 
	$\mathbf{R}'_2= \mathbf{R}_2\cup (L\cap N(L_8\cap \mathbf{R}_2)) $.

%	Let $\mathbf{R}_1=R_1$ be the set of vertices of degree at most $8$ in the set $S\setminus N(\{x,y\})$ as defined in Lemma \ref{lemma x and y in H}. Let  $\mathbf{R}_1'= L\cap N[\mathbf{R}_1]$.

\begin{lemma}\label{lemma few bad pairs with low degrees in G-h}
   $|R_2|\leq 12{\ell^2}$. %Therefore, $|R'_2|\leq 194{\ell^2}$ and
 %  \begin{equation}\label{bfR2}
 %   |\mathbf{R}_2|\leq  |R'_2|+ |\mathbf{R}_1| \leq 200{\ell^2}+ 340\epsilon^{1/2}n.
%   \end{equation}
\end{lemma}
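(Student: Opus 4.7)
The plan is to bound $|R_2|$ via a case analysis based on where the witness $h$ lies relative to $z$. The first step shows that every $z \in R_2$ satisfies $z \in N(\{x,y\}) \setminus Q$: the assumption $z \in L_8 \setminus \mathbf{R}_1'$ gives $z \notin R_1 = (S \setminus N(\{x,y\})) \cap L_8$, so together with $z \in L \subseteq V \setminus H$ (recall $V = H \cup Q \cup S$) we obtain $z \in Q \cup N(\{x,y\})$; but $z \in Q$ is impossible because $Q$-vertices have at least two neighbors in $H \subseteq M$, while $z$ has at most one non-low neighbor (namely the candidate $h$, if $h \in N(z)$). This partitions $R_2$ into: (A) $z \in N(x)$, forcing $h = x$ (as $x \in H$ is the unique non-low candidate) and thus $zx$ bad; (B) $z \in N(y) \setminus N(x)$ with $y \in M$, forcing $h = y$ and $zy$ bad; (C) $z \in N(y) \setminus N(x)$ with $y \in L$, bounded directly by $|N(y)| \leq 2(\ell - 1)$.

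For the main cases (A) and (B), fix $v \in \{x,y\} \cap M$; it suffices to show $|T_v| := |\{z \in L_8 \cap N(v) \setminus \mathbf{R}_1' : zv \text{ bad},\, N(z)\setminus \{v\} \subseteq L\}| = O(\ell^2)$. For $z \in T_v$, unwinding the ``bad pair'' definition gives: for every edge $\{z, a, b\}$ of $z$ with $v \notin \{a,b\}$ and every edge $e \ni v$ with $z \notin e$, $|e \cap \{a, b\}| \in \{0, 2\}$. This forces $d(v, a) \leq d(v, a, b) + d(v, a, z) \leq 2$ for each such $a$ (symmetrically for $b$), and the at-most-two edges at $v$ through $a$ are restricted to $\{v, a, b\}$ and $\{v, a, z\}$. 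Consequently every non-$v$-edge of $z$ lies in one of two configurations: (I) $\{v, a, b\} \in E(G)$, doubling the pair $\{a,b\}$ across $z$ and $v$; or (II) $\{v, a, z\}, \{v, b, z\} \in E(G)$, forming a 4-vertex 3-edge sub-hypergraph on $\{v, a, b, z\}$. The degenerate case $d(vz) = d(z)$ (no non-$v$-edges, making $zv$ vacuously bad) is handled by Claim~\ref{claim degree d d neighbor}, which gives uniqueness of a degree-$2$ double neighbor of $v$ and, via a brief saturation argument for $3 \leq d(z) \leq 8$, an $O(\ell)$ contribution.

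The counting step bounds $|T_v|$ from this structure. Configuration II requires $d(vz) \geq 2$; the triangle-like sub-structure combined with Claim~\ref{claim degree d d neighbor} limits the number of such $z$ per base configuration to $O(\ell)$. Configuration I yields pairs $\{a, b\} \subseteq L \cap N(v)$ with $d(v, a), d(v, b) \leq 2$ and $\{v, a, b\} \in E(G)$, each admitting at most $d(a, b) - 1 \leq \ell - 2$ candidate $z$'s since $a, b \in L$. Provided one shows the number of ``active'' pairs at $v$ is $O(\ell)$, this yields $O(\ell^2)$ in total; summing over $v \in \{x,y\} \cap M$ and adding $2\ell$ from case (C) gives $|R_2| \leq 12\ell^2$. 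The main obstacle is precisely the $O(\ell)$ bound on active pairs: since a priori $v$ might have up to $d(v)$ edges with both non-$v$ vertices in $L$, an extra structural input is needed—most plausibly a saturation argument on non-edges $\{v, z, w\}$ (so that, as $zv$ is bad, $w$ must be linked either via a low neighbor of $z$, yielding only $O(\ell)$ choices, or through a codegree-$2$ pair at $v$, severely restricting the configuration), possibly combined with Claim~\ref{claim good pair double neighbor} and Claim~\ref{Claim 2 deg 2}.
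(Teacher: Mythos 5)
Your first step---reducing to $h\in\{x,y\}$ via the $R_1$ argument---agrees with the paper's proof and is correct. But from there your proposal goes down a fundamentally different and, by your own admission, incomplete path. You explicitly write that the ``main obstacle'' is establishing an $O(\ell)$ bound on ``active pairs'' at $v=h$ and that ``an extra structural input is needed.'' That gap is real, and the decomposition into configurations~I and~II does not close it, because it only examines the edge structure around a single $z\in R_2(h)$ in isolation.

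The paper's counting step uses a different and much simpler idea that never appears in your proposal: it exploits the interaction between \emph{two} members of $R_2(h)$. Fix $h\in\{x,y\}$ and $z\in R_2(h)$, and set $A_1=N(z)\setminus\{h\}$, $A_2=N(A_1)$. Since $z\in L$ and $A_1\subseteq L$, we get $|A_2|\leq 6\ell^2$. Now suppose $|R_2(h)|>6\ell^2$; then there is $z'\in R_2(h)\setminus A_2$. The triple $\{h,z,z'\}$ is a non-edge (else $z'\in A_1$), so adding it must create a triangle. But $h$ cannot be a core vertex, because \emph{both} $hz$ and $hz'$ are bad pairs; so $z$ and $z'$ must be the core vertices of the new edge, which forces a $z,z'$-link in $G-h$. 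Such a link would put $z'\in N(N(z)\setminus\{h\})=A_2$, a contradiction. This gives $|R_2(h)|\leq 6\ell^2$ directly, hence $|R_2|\leq 12\ell^2$.

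Concretely, your suggestion at the end---a saturation argument on non-edges $\{v,z,w\}$ with $w$ arbitrary---points in roughly the right direction but misses that the third vertex should be taken to be \emph{another} element $z'$ of $R_2(h)$, so that the badness of both pairs $hz$ and $hz'$ simultaneously forbids $h$ from being a core vertex. Without that, you cannot rule out $h$ as a core vertex and the argument stalls, which is exactly where you got stuck. You should replace the configuration-counting machinery with this pairing argument; the claims you cite (Claim~\ref{claim degree d d neighbor}, Claim~\ref{claim good pair double neighbor}) are not needed here.
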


\begin{proof}
   For a vertex $h\in M$, let $R_2(h)$ be the set of vertices $z\in L_8\setminus \mathbf{R}'_1$ such that  $zh$ is a bad pair and $N(z)\setminus \{h\}\subseteq L$. For $z\in R_2(h)$,
    the condition that $N(z)\setminus \{h\}\subseteq L$ implies that $h$ is the only vertex in $M$ adjacent to $z$, and further the condition that $z\not\in \mathbf{R}_1$ implies that $h\in \{x,y\}$. Thus $R_2=R_2(x)\cup R_2(y)$.
   
   Fix some $h\in \{x,y\}$ and let $z\in R_2(h)$.
   %, and let us bound the possible number of vertices $z$ that satisfy the hypothesis of the lemma with this choice of $h$. Let $\mathcal{Z}$ denote the neighbors of $h$ that satisfy the lemma, and let $z\in\mathcal{Z}$. 
   Let $A_1=N(z)\setminus \{h\}$. Note that $|A_1|\leq 2\ell$ since $z\in L$. Let $A_2=N(A_1)$. Since $A_1\subseteq L$, $|A_2|\leq (2\ell+1)|A_1|\leq 6\ell^2$. Then if $|R_2(h)|\geq 6\ell^2+1$, there exists some $z'\in R_2(h)\setminus A_2$. Adding the edge $\{h,z,z'\}$ gives a contradiction, since $h$ cannot be a core vertex because $hz$ and $hz'$ are bad pairs, and $z$ and $z'$ are too far apart in $G-h$ to be core vertices together. Thus the number of choices for $z$ adjacent to $h$ is at most $6\ell^2$.
   
  Since $R_2=R_2(x)\cup R_2(y)$, this yields $|R_2|\leq 12{\ell^2}$. %Now,~\eqref{bfR2} follows from Remark~\ref{remark size of R1}.
%   Now, this implies that the total number of choices of vertices $z$ is at most   \[
%   |\{x,y\}|\cdot 6\ell^2=12\ell^2.   \]
\end{proof}

	Let $R_3$ be the set of $1$-flat vertices $u\in L_8\setminus \mathbf{R}'_2$  such that there exists an
	$(M,L,L)$ edge $\{h,u,v\}$ with $h\in M$  and $d(hv)=1$. 
Let $\mathbf{R}_3=R_3\cup \mathbf{R}_2' $, and  let 
	$\mathbf{R}'_3= \mathbf{R}_3\cup (L\cap N(L_8\cap \mathbf{R}_3)) $.

%	Let $R_3'=N[R_3]\cap L$, and finally let $\mathbf{R}_3=R_3'\cup\mathbf{R}_2$.

\begin{lemma}\label{no 1-1flat}
    $|R_3|\leq 16{\ell^2}$. %Therefore, $|R'_3|\leq 272{\ell^2}$ and
%   \begin{equation}\label{bfR3}
 %   |\mathbf{R}_3|\leq  |R'_3|+ |\mathbf{R}_2| \leq 500{\ell^2}+ 340\epsilon^{1/2}n.
%   \end{equation}
   %   There are at most $16\ell^2$ $(M,L,L)$ edges $\{h,u,v\}$ with $h\in M$ such that $u$ is a $1$-flat vertex and $d(hv)=1$.
\end{lemma}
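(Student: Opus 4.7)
The plan is to adapt the proof used for $R_2$. I would first argue, exactly as in Lemma~\ref{lemma few bad pairs with low degrees in G-h}, that every $u \in R_3$ has its unique $M$-neighbor $h$ lying in $\{x,y\}$: $u$ being $1$-flat forces $N(u)\cap M=\{h\}$, and $u \in L_8 \setminus \mathbf{R}_1'$ together with the fact that $u$ cannot belong to $Q$ (which would require two distinct $H$-neighbors) puts $u \in N(\{x,y\})$, so $h \in \{x,y\}$. Writing $R_3 = R_3(x) \cup R_3(y)$, it suffices to show $|R_3(h)| \leq 8\ell^2$ for each $h \in \{x,y\}$. Next I would extract two structural facts about the edge $e_u = \{h, u, v_u\}$. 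First, $uh$ is a \emph{good} pair (otherwise $u$ would satisfy the defining condition of $R_2$, but $u \notin R_2$). Second, $uv_u$ is a \emph{bad} pair: two applications of Claim~\ref{claim good pair double neighbor} to $e_u$ yield this. Indeed, combining the good pair $uh$ with $d(hv_u) = 1$ forces $v_u$ to be a double neighbor of $u$, so $d(uv_u)\geq 2$; and if instead $uv_u$ were good, the claim would require $h$ to be a double neighbor of $u$ or of $v_u$, contradicting $d(uh) = 1$ (from $1$-flatness) and $d(hv_u) = 1$.

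For the counting step, suppose for contradiction that $|R_3(h)| > 8\ell^2$ and fix any $u \in R_3(h)$. Set $A_1 = N(u) \setminus \{h\}$, $A_2 = N(A_1)$, $N_v = N(L \cap N(v_u))$, and $E_u = \{a : \{u, v_u, a\} \in E(G)\}$. Since $u \in L_8$ and $v_u \in L$, we have $|A_1| \leq 16$, $|A_2| \leq 32\ell$, $|N_v| \leq 4\ell^2$, and $|E_u| \leq 8$, so for $\ell$ sufficiently large,
\[
|\{u, v_u\} \cup A_1 \cup A_2 \cup N_v \cup E_u| \leq 4\ell^2 + 32\ell + 26 \leq 8\ell^2.
\]
Hence some $u' \in R_3(h)$ lies outside this set; and since $u' \notin E_u$, the triple $f = \{u, v_u, u'\}$ is a non-edge.

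Now I would case-split on which pair of vertices of $f$ serves as the core vertices of the $C_3^{(3)}$ that saturation forces in $G + f$. Cores $(u, v_u)$ are impossible because $uv_u$ is a bad pair. For cores $(u, u')$, a middle $w \in N(u) \cap N(u')$ with $w \notin \{u, v_u, u'\}$ is required: the candidate $w = h$ is ruled out because $e_u$ is the unique edge containing $u$ and $h$, so the non-core vertex of that link edge is forced to equal $v_u$, a collision in the $C_3^{(3)}$; and any $w \in A_1 \setminus \{v_u\}$ forces $u' \in N(w) \subseteq A_2$, contradicting the choice of $u'$. Cores $(v_u, u')$ are ruled out analogously: $w = h$ fails because $e_u$ is also the unique edge containing $v_u$ and $h$, forcing the non-core vertex to equal $u$; and $w \in A_1(u') \subseteq L$ gives $w \in L \cap N(v_u)$, hence $u' \in N(w) \subseteq N_v$, contradicting the choice of $u'$. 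All three cases fail, so $G + f$ is $C_3^{(3)}$-free, contradicting saturation. Thus $|R_3(h)| \leq 8\ell^2$, and therefore $|R_3| \leq 16\ell^2$.

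The crux of the argument is deriving that $uv_u$ is a bad pair; without it the case in which $u$ and $v_u$ are cores would require ruling out many potential link middles individually. Once $uv_u$ is known to be bad, the rest parallels the $R_2$ proof closely, with the auxiliary set $N_v$ absorbing the new case introduced by taking $v_u$ (rather than a second vertex like $u'$) as the third vertex of the added non-edge.
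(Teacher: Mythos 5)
Your proof is correct and follows essentially the same route as the paper's: after reducing to $h\in\{x,y\}$ and deriving that $uv_u$ is a bad pair via Claim~\ref{claim good pair double neighbor} (from $d(hu)=d(hv_u)=1$), both arguments take a $u'$ lying outside an $O(\ell^2)$-sized excluded set, form the non-edge $\{u,v_u,u'\}$, and rule out every possible link middle. Your sets $A_2$, $N_v$, $E_u$ are a slightly unpacked version of the paper's single excluded set $N[L\cap N[\{u,v\}]]\setminus\{h\}$, and the opening observation that $uh$ is a good pair (hence $d(uv_u)\geq 2$) is correct but not actually used downstream.
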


\begin{proof} 
The only vertices in $M$ that are adjacent to  $1$-flat vertices outside of $\mathbf{R}'_2$ are $x$ and $y$. For $h\in \{x,y\}$, 
let  $\mathcal{L}(h)$ be the family of edges $\{h,u,v\}$ such that $u,v\in L\setminus \mathbf{R}'_2$, $u$ is a
$1$-flat vertex of degree at most $8$,   and $d(hv)=1$. We will show that $|\mathcal{L}(h)|\leq 8\ell^2$, which
 will imply that $|R_3|\leq 16{\ell^2}$. 
%$G$ contains at most $16\ell^2$ such edges.

 Suppose to the contrary that $|\mathcal{L}(h)|>8\ell^2$ and let $e=\{h,u,v\}\in\mathcal{L}$. Let $A_1$ be the set of the vertices in $L\cap N[\{u,v\}]$, and $A_2=N[A_1]\setminus\{h\}$. Since $u,v\in L$, $|A_1|<4\ell$ and $|A_2|<8\ell^2$. Since $d(hu)=d(hv)=1$, each vertex adjacent to $h$ can be in at most one edge in $\mathcal{L}(h)$ in the role of $u$. Hence $A_2$ contains less than $8\ell^2$ vertices in the role of $u$ in $\mathcal{L}(h)$. Thus there is an edge $e'=\{h,u',v'\}\in \mathcal{L}(h)$ such that $u'$ is $1$-flat and is not in $A_2$.

Consider the non-edge $\{u,v,u'\}$. By Claim~\ref{claim good pair double neighbor}, since $d(uh)=d(vh)=1$, $uv$ is a bad pair, so there is a $u',u$-link avoiding $v$ or a $u',v$-link avoiding $u$. This link cannot contain $h$ since the only edge containing $h$ and one of $u$ or $v$ contains both. So since $u'$ is $1$-flat, this link must use a low edge containing $u'$, and some other edge connecting this to one of $u$ or $v$. But then $u'\in A_2$, a contradiction. 
Thus, $|R_3|\leq 16{\ell^2}$.
%Now,~\eqref{bfR3} follows from~\eqref{bfR2}.
%$h$ must be in at most $8\ell^2$ such edges, and consequently, there are at most $16\ell^2$ all together in $G$.
\end{proof}

	Let $R_4$ be the set of  vertices $u\in L_8\setminus \mathbf{R}'_3$  such that there exists an
	 $h\in M$  with $d(hu)=d(u)$.
Let $\mathbf{R}_4=R_4\cup \mathbf{R}_3' $, and  let 
	$\mathbf{R}'_4= \mathbf{R}_4\cup (L\cap N(L_8\cap \mathbf{R}_4)) $.

\begin{lemma}\label{lemma few i vertices that are i-neighbors of h}
 $|R_4|\leq 54\frac{n}{\ell}$. %Therefore, $|R'_4|\leq 918\frac{n}{\ell}$ and
  % \begin{equation}\label{bfR4}
  %  |\mathbf{R}_4|\leq   |\mathbf{R}_3|+|R'_4| \leq 500{\ell^2}+ 340\epsilon^{1/2}n +1000\frac{n}{\ell}.
  % \end{equation}
%	There are at most $54\frac{n}{\ell}$ vertices $u\in V(G)$ such that $d(u)\leq 8$ and $d(hu)=d(u)$ for some $h\in M$.
\end{lemma}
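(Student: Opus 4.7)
The plan is to partition $R_4$ by witness. For each $h \in M$ let $R_4(h) := \{u \in L_8 \setminus \mathbf{R}'_3 : d(hu) = d(u)\}$, so that $R_4 = \bigcup_{h \in M} R_4(h)$. Since $|M| \leq 3|E(G)|/\ell \leq 6n/\ell$, to conclude $|R_4| \leq 54n/\ell$ it will suffice to show $|R_4(h)| \leq 9$ for every $h \in M$. One observation I will use throughout is that every $u \in R_4(h)$ has $d(u) \geq 1$: if $d(u) = 0$ then $u$ has no neighbor in $H \cup \{x,y\}$, so $u \in S \setminus N(\{x,y\})$, and since $u \in L_8$ this places $u$ in $\mathbf{R}_1 \subseteq \mathbf{R}'_3$, contradicting $u \in L_8 \setminus \mathbf{R}'_3$.

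The heart of the proof is the following claim: for any two distinct $u_1, u_2 \in R_4(h)$, the triple $\{h, u_1, u_2\}$ lies in $E(G)$. Suppose not. By saturation, $G + \{h, u_1, u_2\}$ contains a copy $\mathcal{T}$ of $C_3^{(3)}$ that uses the new edge; on this edge, two of $\{h, u_1, u_2\}$ are core (degree-$2$) vertices of $\mathcal{T}$ and one is non-core (degree-$1$). The crucial structural input is that, by the definition of $R_4(h)$, \emph{every} $G$-edge incident to $u_1$ (and likewise to $u_2$) contains $h$. A case analysis on which of $\{h, u_1, u_2\}$ is non-core then proceeds uniformly: the second $\mathcal{T}$-edge through some core vertex chosen from $\{u_1, u_2\}$ must belong to $G$ (as it is not the newly added edge), and therefore must contain $h$. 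This forces $h$ to lie in too many edges of $\mathcal{T}$ — three of them if $h$ is itself core, two if $h$ is non-core — contradicting the structure of $C_3^{(3)}$. Hence $\{h, u_1, u_2\} \in E(G)$.

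Granting the claim, fix any $u_1 \in R_4(h)$. Then $u_1$ lies in the edge $\{h, u_1, u_j\}$ for each of the $|R_4(h)| - 1$ other members of $R_4(h)$, so $d(u_1) \geq |R_4(h)| - 1$; since $u_1 \in L_8$ we obtain $|R_4(h)| \leq d(u_1) + 1 \leq 9$. Summing over $h$, $|R_4| \leq \sum_{h \in M} |R_4(h)| \leq 9|M| \leq 54n/\ell$. The main obstacle I expect is the case analysis inside the key claim: although it is essentially a bookkeeping argument over the six vertices of $\mathcal{T}$, one must carefully leverage the fact that \emph{both} $u_1$ and $u_2$ have all of their $G$-edges through $h$, and the three sub-cases (each choice of non-core vertex in $\{h, u_1, u_2\}$) look slightly different in terms of which vertex is overloaded.
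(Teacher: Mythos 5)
Your proposal is correct and follows essentially the same route as the paper: partition $R_4$ by the witness $h\in M$, show that any two members of $R_4(h)$ together with $h$ form an edge of $G$ (the paper derives the contradiction by observing that the would-be new edge meets every $G$-edge through $u_1$ or $u_2$ in two vertices, while you track which vertex of $\{h,u_1,u_2\}$ is non-core — the same obstruction, phrased differently), and then conclude $|R_4(h)|\leq 9$ from the degree cap $d(u)\leq 8$, giving $|R_4|\leq 9|M|\leq 54n/\ell$.
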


\begin{proof}
    Fix a vertex $h\in M$. Note that if $u$ and $u'$ are both vertices such that $d(hu)=d(u)$ and $d(hu')=d(u')$, then $\{h,u,u'\}\in E(G)$ since if not, adding $\{h,u,u'\}$ cannot create a $C_3^{(3)}$ in $G$ as it intersects every edge containing $u$ and $u'$ in at least two vertices. We claim that $h$ has at most $9$ neighbors $u$ with $d(u)\leq 8$ such that $d(hu)=d(u)$. Indeed, if $h$ had $10$ such neighbors, say $\{u_1,u_2,\dots,u_{10}\}$, then $u_{10}$ must be in $9$ edges, namely $\{h,u_1,u_{10}\}, \{h,u_2,u_{10}\}, \dots, \{h,u_9,u_{10}\}$, contradicting the fact that $d(u)\leq 8$.
    
    Since every vertex $h\in M$ has at most $9$ such neighbors, the total number of vertices $u$ such that $d(u)\leq 8$ and $d(hu)=d(u)$ for some $h\in M$ is at most
    \[
    9|M|\leq 9\cdot\frac{6n}{\ell}=54\frac{n}{\ell}.
    \]
\end{proof}

It is perhaps worth noting that all $0$-vertices and $1$-vertices end up in $\mathbf{R}_4'$ since they are either contained in $R_1$ or in $R_4$.

	\begin{lemma}\label{coneighbors}
 For each $h\in M$ and each  neighbor $u\in L\setminus \mathbf{R}'_4$  of $h$ with 
 $d(u)\leq 8$ and $d(hu)\geq 2$, one of the following holds:

\begin{enumerate}[label=(\alph*)]
\item  there is an edge $\{h,u,w\}$ such that   $d(w)\geq 4$ or $w$ is not $1$-flat and
$d(w)\geq 3$, or

\item there are two edges $\{h,u,w_1\}$ and $\{h,u,w_2\}$ such that neither of $w_1$ and $w_2$ is a $1$-flat $2$-vertex.
\end{enumerate}
\end{lemma}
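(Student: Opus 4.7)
Suppose for contradiction that neither (a) nor (b) holds. Then every edge $\{h,u,w\}$ of $G$ has $w$ either a $2$-vertex or a $1$-flat $3$-vertex, and of the $d(hu)\geq 2$ such edges at most one has $w$ failing to be a $1$-flat $2$-vertex. Pick a $1$-flat $2$-vertex third vertex $w_1$, whose unique other edge is $f_1=\{w_1,a_1,b_1\}$ with $a_1,b_1\in L$, and let $w_2$ be a second third vertex. Since $u\notin R_4$, we have $d(u)\geq d(hu)+1\geq 3$, so $u$ has an edge $e'$ not containing $h$; this extra edge is needed in the last case.

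The plan is to probe saturation by adding the non-edge $\{h,w_1,w_2\}$ (this is not in $E(G)$ because $e_1=\{h,u,w_1\}$ is the unique edge through $\{h,w_1\}$). The resulting $C_3^{(3)}$ in $G+\{h,w_1,w_2\}$ corresponds to a two-edge loose path in $G$ of one of three shapes: an $h,w_1$-link avoiding $w_2$, an $h,w_2$-link avoiding $w_1$, or a $w_1,w_2$-link avoiding $h$. For the $h,w_1$-link, the center cannot be $u$ (otherwise both link-edges would contain the pair $\{h,u\}$, violating the single-vertex overlap required by a loose path), so the center must be $a_1$ or $b_1$. This hands us an edge $g=\{h,a_1,z\}$ of $G$ (or the $b_1$ analogue), and the triple $e_1,g,f_1$ then pairwise shares singletons $\{h\},\{a_1\},\{w_1\}$, forming a $C_3^{(3)}$ in $G$ itself and contradicting $C_3^{(3)}$-freeness --- unless $z$ coincides with $u$ or $b_1$. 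The possibility $z=b_1$ is immediately ruled out by the loose-path requirement on the link; the possibility $z=u$ forces $g$ to be one of the edges through $\{h,u\}$, so $a_1$ equals some $w_i$, and then Claim~\ref{Claim 2 deg 2} applied to $u$ together with Claim~\ref{claim degree d d neighbor} applied to the $2$-vertex $w_1$ closes the case. The $h,w_2$-link is handled symmetrically, with a small adaptation if $w_2$ is a $1$-flat $3$-vertex (its two other edges are still $L$-edges, so the same argument goes through).

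For the $w_1,w_2$-link case the center cannot be $h$ or $u$ (the former is forbidden; the latter fails because $e_1,e_2$ share $\{h,u\}$), so it must lie in $(f_1\setminus\{w_1\})\cap(f_2\setminus\{w_2\})$ where $f_2$ is the analogous $L$-edge of $w_2$. Call the common vertex $c$; we also have $f_1\neq f_2$, as $f_1=f_2$ would put two $2$-vertices $w_1,w_2$ of $N(u)$ inside a single edge not containing $u$, contradicting Claim~\ref{Claim 2 deg 2}. I would then probe the second non-edge $\{c,w_1,w_2\}$; a parallel link analysis shows that the only way it can create a $C_3^{(3)}$ is for $h$ or $u$ to be adjacent to $c$ via an edge not containing $w_1$ or $w_2$. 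Combining such an edge with $f_1$ and either $e_1$ or $e'$ then produces an explicit $C_3^{(3)}$ in $G$, the final contradiction.

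The main obstacle I expect is the bookkeeping in this last step: checking that the witnessing $h$- or $u$-edge through $c$ does not share extra vertices with $e_1,e_2,f_1,f_2,e'$ that would spoil the pairwise-singleton-intersection property of the intended $C_3^{(3)}$. Each such coincidence pins $c$ to a very specific adjacency pattern relative to $\{h,u,w_1,w_2\}$; in each such pattern either the degree restriction from the negation of (a) on a relevant third vertex, the condition $d(hu)<d(u)$ coming from $u\notin R_4$, or Claim~\ref{claim degree d d neighbor} applied to a low-degree pair produces an immediate contradiction.
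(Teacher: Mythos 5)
There is a genuine gap: your argument tacitly assumes $d(uw_1)=1$, i.e.\ that $u\notin f_1$, but nothing in the hypotheses rules out $u$ being a double neighbor of $w_1$. If $u\in f_1$ (so one of $a_1,b_1$ equals $u$, which is exactly the case where the center of the $h,w_1$-link is $u$ and the link edges are $f_1$ and some $\{h,u,z\}$), then $e_1\cap f_1=\{u,w_1\}$ is two vertices and the triple $e_1,g,f_1$ is not a $C_3^{(3)}$; the contradiction you are aiming for evaporates. Your parenthetical ``otherwise both link-edges would contain the pair $\{h,u\}$'' only excludes the possibility that the second link edge is $e_1$; it does not exclude the possibility that the second link edge is $f_1$ with $u\in f_1$, which is perfectly consistent with the loose-path requirement because $h\notin f_1$. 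Note that $d(uw_1)=2$ is compatible with all the standing hypotheses (by Claim~\ref{claim degree d d neighbor} it just forces $d(u)\ge 4$ and makes $w_1$ the unique degree-$2$ double neighbor of $u$), so this is a live case, not a vacuous one. The same unaddressed possibility infects the $h,w_2$-link case and the $w_1,w_2$-link case, where you again implicitly use $e_1\cap f_1=\{w_1\}$. In the paper's proof this is precisely the split between its Case~2 (where $u$ sits in the link edge containing $h$) and its Cases~1 and~3 (where $u$ sits in the link edge containing $w_1$, possibly as its center); the latter require a genuinely different argument, including a second probe on $\{w_1,u,w_2\}$.

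A secondary point: the $w_1,w_2$-link branch is left as a plan (``I would then probe $\{c,w_1,w_2\}$\ldots the main obstacle I expect is the bookkeeping''), so even setting aside the issue above the proof is not complete. Where your argument does go through (the $d(uw_1)=1$ sub-case), the route is legitimately different from the paper's: you probe saturation at $\{h,w_1,w_2\}$ and case-split on the core pair, while the paper starts from the known fact that $w_1h$ is a good pair (via $\mathbf{R}'_2$) and case-splits on which link edge contains $u$. Both would need to be completed in the currently-missing cases, and in those cases the paper's version is substantially more involved than the portion you have written.
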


\begin{proof} 
Suppose that for some  $u\in L_8\setminus \mathbf{R}'_4$   with $d(hu)\geq 2$  neither of (a) and (b) holds.
 This means that  there are  vertices $w_1,w_2$ such that for $i\in [2]$,  $e_i=\{h,u,w_i\}\in E(G)$ and at least one of them, say
$w_1$, is a $1$-flat $2$-vertex. Since $u\notin \mathbf{R}'_2$, $w_1\notin \mathbf{R}_2$, and hence
 $w_1h$ is a good pair.
Let edges $f_1$ and $f_2$ form a $w_1,h$-link with $w_1\in f_1$. In order to avoid a triangle with edges $e_1,f_1$ and $f_2$ in $G$,  $f_1$ or $f_2$ contains $u$.

{\bf Case 1:} $u\notin f_2$. Then $u\in f_1$, so that $f_1$ has the form $\{w_1,u,z\}$ and 
$f_2$ has the form $\{h,z,z'\}$ for some $z,z'$. If $w_2\notin \{z,z'\}$, then $G$ contains triangle with edges $f_1,f_2$ and
$\{h,z,z'\}$, a contradiction. Otherwise, the co-degree of $w_2h$ is at least $2$. Since  $u\notin \mathbf{R}'_4$, $w_2\notin \mathbf{R}_4$, and hence $w_2$
cannot be a double neighbor of $h$ of degree $2$.
 Thus Part (a) of the claim of the lemma holds.

{\bf Case 2:} $u\notin f_1$. Then $u\in f_2$, so that $f_2$ has the form $\{h,u,z\}$ and 
$f_1$ has the form $\{w_1,z,z'\}$ for some $z,z'$. By symmetry, we may assume $z=w_2$.
If $d(w_2)\geq 4$ or $w_2$ is a not $1$-flat $3$-vertex, then 
Part (a) of the lemma holds. 
 If $d(w_2)=2$, then the non-edge $f=\{h,w_1,w_2\}$ intersects each of the edges containing $w_1$ or $w_2$ in two vertices.
 Thus neither of $w_1$ or $w_2$ could be a core vertex in a triangle in $G+f$. This
 yields $d(w_2)\geq 3$. So, if (a) does not hold, then
\begin{equation}\label{c2}
 \mbox{\em $w_2$ is a $1$-flat $3$-vertex.   }   
\end{equation}
 Let $e_3=\{w_1,u,w_2\}$. Since we know both edges containing $w_1$, $e_3\notin E(G)$. So $G'=G+e_3$
 contains a triangle, say with edges $g_1,g_2$ and $e_3$. Since each of the edges containing $w_1$ has two common vertices with $e_3$, $g_1$ and $g_2$ form a $u,w_2$-link. We may assume that $u\in g_1$. 
 
 Since $G$ has no triangle with edges
 $g_1,g_2$ and $e_2$, $h\in g_1\cup g_2$. Since by~\eqref{c2} the co-degree of $hw_2$ is $1$,
 $g_1$ has the form $\{h,u,w_3\}$ and 
$g_2$ has the form $\{w_2,w_3,t\}$ for some $t$. If  $w_3$ is not a $1$-flat $2$-vertex, then Part (b) of the
 the lemma holds. So suppose $w_3$ is  a $1$-flat $2$-vertex.Then we know all edges containing at least one of $w_1,w_2$ and $w_3$.
 In particular, edge $e_4=\{w_1,w_2,w_3\}$ is not in $G$, and hence $G_1=G+e_4$ must contain a triangle. However, the edges containing at least one of $w_1,w_2$ and $w_3$ and sharing less than two vertices with $e_4$ are only
 $e_1,e_2$ and $g_1$. Any two of these edges share $u$ and $v$, a contradiction.

{\bf Case 3:} $u\in f_1\cap f_2$. We may assume that $f_1=\{w_1,u,z\}$ and $f_2=e_2$. In particular, $z\neq w_2$.
Consider $e_3$ and $G'$ as in Case 2. Again, there are edges $g_1$ and $g_2$ forming a $u,w_2$-link. 
Since $u\notin \mathbf{R}'_4$, $w_2\notin \mathbf{R}_4$; thus $w_2$ is 
 $1$-flat. Then $v\in g_1-g_2$, so we may assume $g_1=\{u,h,w_3\}$ and $g_2=\{w_2,w_3,t\}$ for some $t\notin \{h,u\}$. 
 So, we have Case 2 with $w_3$ in the role of $w_1$.
\end{proof} 

\subsection{Edges of type \texorpdfstring{$(M,L,2)$}{2}}
	
	We have two goals in this section. First, we want to prove a structural result which tells us essentially that in almost all $(M,L,2)$ edges, either such an edge contains a good pair involving the vertex in $M$, or the two low vertices in the edge are connected via a tight path of length $2$. Second, we will use this structural result to show that there are only few $(M,3,2)$ or $(M,2,2)$ edges in $G$.

	Let $R_5$ be the set of  vertices $u\in L_8\setminus \mathbf{R}'_4$  such that there exists an
	edge $\{h,u,t\}$ with the following properties
	\begin{itemize}
	\item $h\in M$ and $d(t)=2$,
		    \item $hu$ and $ht$ are bad pairs, and
		    \item there are no edges $\{u,a,b\}$ and $\{t,a,b\}$ for some $a,b\in V(G)\setminus \{h,u,t\}$.
		\end{itemize}
Let $\mathbf{R}_5=R_5\cup \mathbf{R}_4' $, and  let 
	$\mathbf{R}'_5= \mathbf{R}_5\cup (L\cap N(L_8\cap \mathbf{R}_5)) $.

	\begin{lemma}\label{Daras}
	$|R_5|\leq 2000\frac{n}{\ell}$. 
	\end{lemma}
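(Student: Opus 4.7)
My plan is a per-$h$ analysis: for each $h \in M$, I bound the number of $u \in R_5$ admitting a witness triple $(h, u, t)$ by a constant independent of $n$, and then sum using $|M| \leq 6n/\ell$ to obtain $|R_5| \leq 2000\,n/\ell$.

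The first step is to extract structural information. Fix $u \in R_5$ with witness edge $\{h, u, t\}$, and let $e_t = \{t, p, q\}$ be the second edge of the $2$-vertex $t$. Since $u \notin \mathbf{R}'_4$ and $t$ is a low neighbor of $u$ of degree at most $8$, we have $t \notin \mathbf{R}_4$, hence $t \notin R_2 \cup R_3 \cup R_4$. From $t \notin R_2$ together with $d(t)=2$ and $ht$ being a bad pair, I conclude $N(t) \setminus \{h\} \not\subseteq L$, so WLOG $p \in M$; from $t \notin R_4$ I obtain $d(ht) = d(pt) = 1$. Similarly, $u \notin R_2$ combined with $hu$ bad forces a second $M$-neighbor $h' \neq h$ of $u$. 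Finally, the ``no tight link'' clause in the definition of $R_5$ translates (since $t$'s only edges are $\{h,u,t\}$ and $\{t,p,q\}$) to $\{u,p,q\} \notin E(G)$.

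The key step is then saturation on the non-edge $\{u,p,q\}$: $G + \{u,p,q\}$ must contain a loose triangle $T$ whose two other edges lie in $G$. I would case-analyse on which pair of $\{u,p,q\}$ serves as the two cores of $T$ (three cases: $(u,p)$, $(u,q)$, $(p,q)$), using the known structure of $t$'s edges, $d(ht)=d(pt)=1$, and the bad pair conditions $hu, ht$. In each case, either a pre-existing $C_3^{(3)}$ in $G$ is forced (a contradiction), or $u$ is restricted to the $2$-step neighborhood of the pair $\{h,p\}$ via a link whose center $w$ is neither $h$ nor $t$. To finish, I would partition $R_5(h) := \{u \in R_5 \text{ with witness of the form } (h,\cdot)\}$ by the second $M$-neighbor $p \in M$ of the witness $t$: for each such $p$, the geometric restriction from the saturation step bounds the number of admissible $(t,u)$ by a constant, and summing over $p$ and then over $h \in M$ gives the desired bound.

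The hardest part is the case analysis in the saturation step: each of the three core-pair cases branches further on the location of the third core $w$, which a priori could coincide with $h$, $h'$, $t$, $x$, $y$, or a fresh vertex. For each sub-case I must verify that the resulting triangle either forces an already-existing $C_3^{(3)}$ in $G$ (via the bad-pair and codegree constraints from Step 1) or fits one of a bounded number of combinatorial types. Preventing potentially unbounded high-degree neighbors---in particular $x$ and $y$ from Lemma~\ref{lemma x and y in H}---from serving as link centers for uncontrolled many $u$'s is the delicate point, and I expect this to be handled by combining the bad-pair hypotheses with Claim~\ref{claim good pair double neighbor} and Claim~\ref{claim degree d d neighbor}.
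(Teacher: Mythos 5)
Your structural extraction in the first step is correct and matches facts the paper also uses (e.g.\ $d(ht)=1$ and the translation of the ``no common link'' clause), but the route you take afterwards is genuinely different from the paper's, and it has two problems.

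First, the final counting does not close. You propose to partition $R_5(h)$ by the second $M$-neighbor $p$ of the witness $t$, bound the contribution of each $(h,p)$ by a constant, then sum ``over $p$ and then over $h\in M$.'' Since $p\in M$ ranges over $M$ independently of $h$, this double sum gives at most $O(|M|^2)=O(n^2/\ell^2)$, which is far larger than the target $O(n/\ell)$ (recall $\ell\ll\log n$, so $n/\ell\gg 1$). To obtain $O(n/\ell)$ you need a constant bound \emph{per $h$}, not per $(h,p)$, and nothing in the sketch shows that only constantly many $p$'s occur for a fixed $h$; a priori the number of distinct $p$'s is comparable to $|R_5(h)|$ itself, making the argument circular.

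Second, the saturation step on $\{u,p,q\}$ is not obviously productive. You flag yourself that the link centers could be high-degree vertices such as $x,y$ and that this is ``the delicate point,'' but no mechanism is offered to prevent a single high-degree center from serving unboundedly many $u$'s. The paper avoids this issue entirely: it never touches the non-edge $\{u,p,q\}$ nor the vertex $p$. Instead it assumes a fixed $h$ has at least $315$ witness edges, thins them to $35$ edges meeting pairwise only in $h$, uses the non-edges $\{h,t_i,t_j\}$ (with $ht_i,ht_j$ bad) to force the second edges $e_i'$ of the $t_i$'s to pairwise intersect, pigeonholes to find $18$ of them sharing a vertex $a_1$, and then uses the non-edges $\{h,u_1,t_j\}$ (with $hu_1,ht_j$ bad) together with the $R_5$ clause to show $u_1$ is adjacent to $17$ distinct $b_j$'s, contradicting $d(u_1)\leq 8$. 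This yields the per-$h$ constant $314$ directly. I would redirect your effort toward a per-$h$ contradiction of this flavor rather than trying to localize $u$ via $p$.
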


	\begin{proof}
	    Fix $h\in M$. We claim that $h$ is in at most $314$ edges $\{h,u,t\}$ that satisfy the conditions in the  definition of $R_5$. To see this, assume to the contrary that $h$ is in at least $315$ such edges. Note that each such edge containing $h$ can intersect at most $8$ other edges in two vertices, possibly $7$ other such edges containing $hu$ and one containing $ht$. Thus, we can find a collection of $35=315/9$ such edges $\{h,u,t\}$ that only intersect in the vertex $h$.
	    
	Call these edges $e_1,e_2,\dots,e_{35}$, and let $e_i=\{h,u_i,t_i\}$, where $d(t_i)=2$. Furthermore, let $e_i'=\{t_i,a_i,b_i\}$ be the second edge containing $t_i$. Consider the non-edge $\{h,t_i,t_j\}$ for some $1\leq i,j\leq 35$, $i\neq j$. Since $ht_i$ and $ht_j$ are bad pairs, there is a $t_i,t_j$-link  that avoids $h$. Thus, this link  consists of the edges $e_i'$ and $e_j'$.
	
	Now, fix $t_1$, and let $U_a$ and $U_b$ denote the sets of vertices $t_j$ with $2\leq j\leq 35$ such that $e_i'\cap e_j'=\{a_1\}$ and $e_i'\cap e_j'=\{b_1\}$ respectively. One of $|U_a|$ or $|U_b|$ must have size at least $18$, so assume without loss of generality $|U_a|\geq 18$, and by reordering if necessary, we can assume that $a_i=a_j$ for $1\leq i,j\leq 18$.
	
	Now consider the non-edge $\{h,u_1,t_j\}$ for each $2\leq j\leq 18$. By assumption, $hu_1$ and $ht_j$ are bad pairs, so there is a  $u_1,t_j$-link that avoids $h$. This link must use the edge $\{t_j,a_j,b_j\}$. Note that the second edge of this link cannot be of the form $\{u_1,a_j,z\}$ since if $z=b_1$, then recalling that $a_j=a_1$, this gives us that $\{u_1,a_1,b_1\}$ and $\{t_1,a_1,b_1\}$ are edges, contradicting the initial assumption of the lemma, and if $z\neq b_1$, then the edges $\{h,u_1,t_1\}$, $\{u_1,a_j,z\}$ and $\{t_1,a_1,b_1\}$ form a $C_3^{(3)}$ in $G$. Thus, there must be an edge $\{u_1,b_j,z\}$ where $z\neq a_j$. So, $u_1$ is adjacent to $b_j$ for all $1\leq j\leq 18$.
	
	Now, since $e_i'\cap e_j'=\{a_1\}$ for all $2\leq i,j\leq 18$ with $i\neq j$, we have that $b_i\neq b_j$ for all $2\leq i,j\leq 18$, $i\neq j$. Thus, $|N(u_1)|\geq |\{b_2,b_3,\dots,b_{18}\}|=17$, which contradicts the fact that $d(u_1)\leq 8$ (and consequently, $|N(u_1)|\leq 16$.
	
	This establishes that indeed, every vertex $h\in M$ is in at most $314$ such edges. Thus, %the total number of such edges is at most
	\[|R_5|\leq 
	314\cdot |M|\leq 314\frac{6n}{\ell}<2000\frac{n}{\ell}.
	\]
	\end{proof}

We now use the above lemma to prove that there are very few $(M,2,2)$ and $(M,3,2)$ edges.

\begin{lemma}\label{lemma h22 h23 bad pairs}
	Let $h\in M$ and let $e_1=\{h,v_1,v_2\}$ be a $(h,2,2)$ or $(h,2,3)$ edge, where $v_1,v_2\notin \mathbf{R}'_5 $.
 Then $hv_1$ and $hv_2$ are bad pairs.
\end{lemma}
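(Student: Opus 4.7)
The plan is by contradiction: suppose one of $hv_1,hv_2$ is a good pair. A short case analysis based on Claim~\ref{claim good pair double neighbor} (the third vertex of $e_1$ must be a double neighbor of one endpoint of the good pair), Claim~\ref{claim degree d d neighbor} (forcing $d(u)\ge d(v)+2$ when $d(uv)=d(v)$), and the hypothesis $v_1,v_2\notin\mathbf{R}_5'\supseteq R_4$ (so no endpoint can be in $R_4$) immediately dispatches every sub-case except one: $d(v_1)=2$, $d(v_2)=3$, $hv_1$ good, and $v_2$ a double neighbor of $h$. Indeed, in every other sub-case, either the ``double neighbor of the low endpoint'' branch gives $d(v')\ge 4$ via Claim~\ref{claim degree d d neighbor} (impossible since $d(v')\le 3$), or the ``double neighbor of $h$'' branch gives $d(hv')=d(v')\le 2$, contradicting $v'\notin R_4$.

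In the remaining sub-case, $v_2\notin R_4$ forces $d(hv_2)=2$ and $d(v_2)=3$. Denote the two edges through $h,v_2$ by $e_1$ and $e_2=\{h,v_2,w\}$, and the third edge of $v_2$ by $e_3=\{v_2,c,d\}$ with $c,d\ne h$. Since $d(v_1)=2$, any $hv_1$-link must use $v_1$'s only other edge $e_1'$; analyzing where the center of the link lies and excluding the already-handled case $v_2\in e_1'$ forces $e_1'=\{v_1,w,x\}$ for some $x$. A symmetric application of Claim~\ref{claim good pair double neighbor} now shows $hv_2$ must also be bad: otherwise $v_1$ would be a double neighbor of $h$ or $v_2$, both contradicted by $v_1\notin R_4$ or by Claim~\ref{claim degree d d neighbor} combined with $d(v_2)=3$.

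To finish, exploit $hv_2$ being bad. Since $e_3$ is $v_2$'s only non-$h$ edge, every edge through $\{h,c\}$ or $\{h,d\}$ must share another vertex with $e_3$, and combined with $v_2$'s only $h$-edges being $e_1,e_2$, this forces either $c=w$, $d=w$, or neither. In the sub-case $c=w$ (the $d=w$ case is symmetric), examine the three edges $e_1,e_3,e_1'=\{v_1,c,x\}$: if $x\notin\{h,d\}$, they pairwise intersect in the single vertices $v_2,c,v_1$ and form a $C_3^{(3)}$ in $G$, a contradiction. If $x=h$, then $v_1$'s two edges both contain $h$, so $d(hv_1)=d(v_1)=2$, contradicting $v_1\notin R_4$. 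If $x=d$, then every edge through $v_1$ or $v_2$ shares at least two vertices with the non-edge $\{v_1,v_2,c\}$, so $G+\{v_1,v_2,c\}$ cannot contain any $C_3^{(3)}$, contradicting saturation. The remaining sub-case $c\ne w$ and $d\ne w$ is handled by invoking Lemma~\ref{coneighbors} applied to $v_2$ (which forces $d(w)\ge 3$ or $w$ to be non-$1$-flat) and then deriving an analogous saturation contradiction using the non-edge $\{v_1,v_2,c\}$ (or $\{v_1,v_2,d\}$).

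The main obstacle is the final case analysis: verifying that the candidate triples are genuine loose $3$-cycles requires careful tracking of distinctness among $h,v_1,v_2,w,x,c,d$, and the residual case $c,d\ne w$ appears to genuinely require the additional structural leverage provided by Lemma~\ref{coneighbors}.
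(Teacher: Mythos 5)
Your initial reduction and derivation of the forced structure $e_2=\{h,v_2,w\}$, $e_1'=\{v_1,w,x\}$, $d(v_1)=2$, $d(v_2)=3$, $d(hv_2)=2$ are correct and match the paper. Your handling of the $c=w$ (and symmetric $d=w$) sub-case is also correct, though slightly more case-heavy than necessary: the paper disposes of both of these at once by noting that if $d(v_2w)=2$ then the non-edge $\{v_1,v_2,w\}$ intersects every edge through $v_1$ or $v_2$ in two vertices, so adding it cannot create a triangle.

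The genuine gap is in the residual case $c,d\neq w$, which is in fact the main case. Your proposed resolution — invoke Lemma~\ref{coneighbors} and derive ``an analogous saturation contradiction using the non-edge $\{v_1,v_2,c\}$ (or $\{v_1,v_2,d\}$)'' — does not work. The reason is that $e_1'=\{v_1,w,x\}$ intersects $\{v_1,v_2,c\}$ in only the single vertex $v_1$ whenever $x\notin\{c\}$ (which is exactly the hard sub-case); consequently $G+\{v_1,v_2,c\}$ may well already contain a triangle (e.g.\ via $e_1'$ and $e_2$ through the center $w$), so no contradiction to saturation arises. Lemma~\ref{coneighbors} tells you something about the degree or flatness of $w$, but that information does not plug the hole.

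What is actually needed here — and what the paper does — is to use the non-edge $\{v_1,v_2,w\}$, which \emph{does} intersect both edges through $v_1$ in two vertices. Since $d(v_2w)=1$ in this case, $e_3=\{v_2,c,d\}$ is the unique edge through $v_2$ meeting $\{v_1,v_2,w\}$ in one vertex, so the triangle in $G+\{v_1,v_2,w\}$ must have core vertices $v_2,w$ with link edges $e_3$ and some $f_2\ni w$. If $h\notin f_2$, then $e_2,e_3,f_2$ form a $C_3^{(3)}$ in $G$. So $h\in f_2$, i.e.\ $f_2=\{h,w,a\}$ with $a\in\{c,d\}$; but then $e_1,e_3,f_2$ form a $C_3^{(3)}$ in $G$. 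This chain of deductions — the $v_2,w$-link forced through $h$, then colliding with $e_1$ — is the missing ingredient, and it requires no appeal to Lemma~\ref{coneighbors} at all. Your instinct that the final case needs extra leverage is understandable, but the leverage should come from choosing the right non-edge ($\{v_1,v_2,w\}$ rather than $\{v_1,v_2,c\}$), not from the coneighbor lemma.
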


\begin{proof}
	By Claim~\ref{claim degree d d neighbor}, $v_1$ and $v_2$ cannot be double neighbors since one of them is degree $2$, and the other is degree at most $3$. 
	
	Assume  that $hv_1$ is a good pair. To avoid having a $C_3^{(3)}$ in $G$, any  $h,v_1$-link must contain $v_2$. Since $v_1$ is not a double neighbor  of $v_2$, this implies that we have edges $\{h,v_2,w\}$ and $\{v_1,w,z\}$ for some vertices $w,z\in V(G)\setminus \{h,v_1,v_2\}$. Since  $v_2$ is a double neighbor  of $h$ and is not in $\mathbf{R}_4$, we have $d(v_2)=3$, and consequently $d(v_1)=2$.
	
	Consider the non-edge $\{v_1,v_2,w\}$. If $d(v_2w)=2$, then this non-edge intersects every edge containing $v_1$ and $v_2$ in two vertices, so $G+\{v_1,v_2,w\}$ cannot contain a $C_3^{(3)}$. Thus, $d(v_2w)=1$. Furthermore, since $\{v_1,v_2,w\}$ intersects every edge containing $v_1$ in two vertices, there exists a $v_2,w$-link that avoids $v_1$. To avoid a $C_3^{(3)}$ in $G$, $h$ must be in this link. Since $v_2$ is not in $R_4$,
	 $d(hv_2)<d(v_2)=3$, so this $v_2,w$-link must contain edges of the form $\{h,w,a\}$ and $\{v_2,a,b\}$.  But  this creates a $C_3^{(3)}$ in $G$ with $\{h,v_1,v_2\}$, a contradiction. %Thus, $hv_1$ and $hv_2$ are  bad pairs.
\end{proof}

\begin{lemma}\label{lemma h22 h23 house}
	Let $h\in M$ and let $\{h,v_1,v_2\}$ be a $(h,2,2)$ or a $(h,3,2)$ edge of $G$, where $v_1,v_2\notin \mathbf{R}'_5 $. Let $\{v_1,a,b\}$ and $\{v_2,a,b\}$ for some $a,b\in V(G)\setminus\{h,v_1,v_2\}$. Then at most one of $e_2$ or $e_3$ is in $E(G)$.
\end{lemma}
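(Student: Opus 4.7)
The plan is to argue by contradiction. Assume both $e_2 := \{v_1, a, b\}$ and $e_3 := \{v_2, a, b\}$ lie in $E(G)$, and without loss of generality take $d(v_2) = 2$, so $v_2$'s only edges are $e_1 := \{h, v_1, v_2\}$ and $e_3$. By Lemma \ref{lemma h22 h23 bad pairs}, both $hv_1$ and $hv_2$ are bad pairs, which will sharply restrict the edges of $G$ through $h$.

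The principal step is to apply saturation to the non-edge $\{v_1, v_2, a\}$. This triple is genuinely a non-edge because $d(v_2) = 2$ and neither of $v_2$'s two edges equals it; hence $G + \{v_1, v_2, a\}$ contains a copy of $C_3^{(3)}$ through the new edge. Because $\{v_1, v_2, a\}$ meets each edge through $v_2$ in two vertices, $v_2$ cannot be a core vertex of this copy, so the two core vertices must be $v_1$ and $a$. In the $(h, 2, 2)$ case both edges through $v_1$ also share two vertices with $\{v_1, v_2, a\}$, so $v_1$ cannot be a core either, which already yields a contradiction. In the $(h, 3, 2)$ case with $d(v_1) = 3$, $v_1$ must have a third edge $f = \{v_1, c, d\}$ that plays the role of the $v_1$-side of the required link; running the symmetric argument on $\{v_1, v_2, b\}$ then forces $c, d \notin \{v_2, a, b\}$.

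The $a$-side of the link must then be an edge of the form $\{c, a, z\}$ or $\{d, a, z\}$, with the shared centre being $c$ or $d$ respectively. The next key claim is that $z = b$: if $z \neq b$, then the three edges $f$, $e_2$, and this link $a$-edge already form a $C_3^{(3)}$ in $G$ itself, with pairwise intersections $\{v_1\}, \{a\}, \{c\}$ (or $\{d\}$) directly verified from the disjointness constraints on $c, d$. Since $G$ is $C_3^{(3)}$-free this is impossible, forcing $z = b$, so one of $\{a, b, c\}$ or $\{a, b, d\}$ belongs to $E(G)$; the symmetric treatment of the non-edge $\{v_1, v_2, b\}$ yields the same conclusion.

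The final contradiction comes from considering an additional non-edge, most naturally $\{v_1, v_2, c\}$, $\{v_1, v_2, d\}$, or one of $\{h, a, c\}, \{h, b, d\}$, together with the very tight restrictions on $h$'s edges that follow from $hv_1$ and $hv_2$ being bad pairs (any $\{h, a, w\} \in E(G)$ must satisfy $w \in \{v_1, v_2, b\}$, and analogously at $b, c, d$, and one further uses $v_1, v_2 \notin \mathbf{R}'_4$ to control codegrees). In each resulting subcase either no valid link can be constructed for the chosen cores — directly violating saturation — or the forced link edge combines with $f$, $e_2$, and a previously established edge such as $\{a, b, c\}$ to exhibit an explicit $C_3^{(3)}$ in $G$. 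The hard part is the branching over which of $c, d$ serves as the link centre for each non-edge in play, together with degenerate subcases such as $c = h$ or $d = h$; in each branch one must carefully verify the loose-cycle conditions (pairwise singleton intersections and no vertex in all three edges) when writing down the $C_3^{(3)}$ that produces the contradiction.
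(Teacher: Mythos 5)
Your handling of the $(2,2)$ case coincides with the paper's, and your analysis of the non-edge $\{v_1,v_2,a\}$ in the $(3,2)$ case (forcing $f=\{v_1,c,d\}$ to be the $v_1$-side link edge, ruling out $c,d\in\{v_2,a,b\}$, and then showing $z=b$ by exhibiting the triangle $f,e_2,g_2$ otherwise) is correct and in fact extracts more structure than the paper's corresponding step, which only uses that non-edge to rule out $d(v_2 a)\geq 2$. However, there are two substantive gaps.

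First, the degenerate case $h\in f$, which you mention only as something to handle later, is not handled, and it matters: if $c=h$, the ``bad pair'' information on $hv_1$ no longer lets you conclude $h\notin N(\{a,b,c,d\})$, and your argument would produce the edge $\{a,b,h\}$ rather than a useful low edge. The paper removes this case \emph{before} introducing $c,d$ at all, by showing $d(h,\mathrm{degree\text{-}3\ vertex})=1$ via the separate non-edge $\{h,\mathrm{degree\text{-}2\ vertex},z\}$ where $z$ is the purported second neighbor of $h$ in the degree-3 vertex's link; your sketch has no analogue of this step.

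Second, and more seriously, the final contradiction is only gestured at, and the specific non-edges you propose do not deliver it. If you work out $\{v_1,v_2,c\}$ or $\{v_1,v_2,d\}$ by the same method, the cores are forced to be $v_1$ and $c$ (resp.\ $d$), the $v_1$-side edge is forced to be $e_2$, and the $c$-side edge $g_2$ is forced to avoid the triangle $f,e_2,g_2$ only by landing on $\{a,c,d\}$ or $\{b,c,d\}$; so these non-edges merely manufacture additional edges inside $\{a,b,c,d\}$ and one cannot write down a $C_3^{(3)}$ from them together with $e_1,e_2,e_3,f,\{a,b,c\}$. Similarly $\{h,a,c\}$ and $\{h,b,d\}$ are harmless: in $G+\{h,a,c\}$ the link $e_2,f$ already connects $a$ and $c$, so saturation is satisfied without forcing anything. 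The non-edge the paper actually uses, in your labelling, is $\{v_2,c,d\}$: since $f=\{v_1,c,d\}$ and $d(v_1c)=d(v_1d)=1$, Claim~\ref{claim good pair double neighbor} shows $cd$ is a bad pair, so $v_2$ must be a core, its link edge can only be $e_3$, and then for any admissible $g_2=\{c,a,z\}$ or $\{c,b,z\}$ the edges $f,e_2,g_2$ form a $C_3^{(3)}$ in $G$. That single move closes the proof and makes your earlier extraction of $\{a,b,c\}$ or $\{a,b,d\}$ unnecessary. Without it, your proposal does not reach a contradiction.
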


\begin{proof} 
	Suppose $G$ contains all the edges $\{h,v_1,v_2\}$, $\{v_1,a,b\}$ and $\{v_2,a,b\}$ for some $a,b\in V(G)\setminus\{h,v_1,v_2\}$. If $d(v_1)=d(v_2)=2$, then  the edge $\{v_1,v_2,a\}$ is not an edge of $G$, but it intersects every edge containing either $v_1$ or $v_2$ in two vertices. So adding this edge to $G$ does not create a $C_3^{(3)}$. Thus, we may assume that $\{h,v_1,v_2\}$ is a $(h,3,2)$ edge, say with $d(v_1)=2$ and $d(v_2)=3$. If $d(v_2a)=2$, then the non-edge $e'=\{v_1,v_2,a\}$ intersects every edge containing $v_1$ and $v_2$ in two vertices, so $G+e'$ cannot contain a $C_3^{(3)}$, a contradiction. Thus $d(av_2)=1$. Similarly, $d(bv_2)=1$.
	
	Suppose now that $d(hv_2)=2$, say $\{h,v_2,z\}\in E(G)$, and note that $z\not\in \{v_1,h,a,b\}$. Consider the non-edge $\{h,v_1,z\}$, and let $T$ be a $C_3^{(3)}$ in $G+\{h,v_1,z\}$. If $h$ and $z$ are core vertices in $T$, then any $h,z$-link
	 must contain $v_1$. But every edge that contains $v_1$ and one of $h$ or $z$ intersects $\{h,v_1,z\}$ in two vertices, so $h$ and $z$ cannot both be core vertices. Thus $v_1$ is a core vertex of $T$. So, $\{v_1,a,b\}$ is one of the edges of $T$. By Lemma~\ref{lemma h22 h23 bad pairs}, $hv_1$ is a bad pair, so $z$ must be the second core vertex of $T$. Thus, there exists some edge containing $z$ and exactly one of $a$ or $b$, say without loss of generality $\{z,a,w\}\in E(G)$. Note that $w\not\in \{h,v_1,v_2,b\}$, and thus $\{h,v_1,v_2\}$, $\{v_1,a,b\}$ and $\{h,a,w\}$ form a $C_3^{(3)}$ in $G$, a contradiction. Thus, $d(hv_2)=1$.
	
	Let $u_1,u_2\in V(G)$ be such that $\{v_2,u_1,u_2\}\in E(G)$, and note that $u_1,u_2\not\in \{h,v_1,a,b\}$. Consider the non-edge $\{v_1,u_1,u_2\}$. By Claim~\ref{claim good pair double neighbor}, $u_1u_2$ is a bad pair since $d(u_1v_2)=d(u_2v_2)=1$, and thus there is a $v_1,u_1$-link avoiding $u_2$ or a $v_1,u_2$-link avoiding $u_1$. Assume the former.
	 If this $v_1,u_1$-link uses the edge $\{h,v_1,v_2\}$, then it must also use an edge $\{h,u_1,w\}$ for some $w\not\in \{v_2,u_2\}$ since the only  edge containing $\{v_2,u_1\}$ is $\{v_2,u_1,u_2\}$. But then $\{h,v_1,v_2\}$, $\{h,u_1,w\}$ and $\{v_2,u_1,u_2\}$ form a $C_3^{(3)}$ in $G$, a contradiction. If instead, the link uses the edge $\{v_1,a,b\}$, then it must also use an edge containing exactly one of $a$ or $b$ and $u_1$, but not $u_2$, say  $\{a,u_1,w\}$ for some $w\not\in \{b,v_2,u_2\}$. Again we reach a contradiction since $\{v_2,a,b\}$, $\{v_2,u_1,u_2\}$ and $\{a,u_1,w\}$ form a $C_3^{(3)}$ in $G$. Thus, in all cases we arrive at a contradiction.
	 %, and so the edges $\{v_1,a,b\}$ and $\{v_2,a,b\}$ cannot both be in $G$.
\end{proof}

Lemma~\ref{lemma h22 h23 house} together with the definition of $R_5$ yield the following.
\begin{corollary}
    \label{remM22}
For every $(h,2,2)$ or $(h,3,2)$ edge $\{h,v_1,v_2\}$ where $h\in M$, vertices $v_1$ and $v_2$ are in $\mathbf{R}'_5 $.
\end{corollary}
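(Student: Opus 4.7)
The plan is to prove this directly by contradiction, by observing that the definition of $R_5$ has been set up precisely so that the two preceding lemmas verify every one of its clauses. Suppose for contradiction that $v_1, v_2 \notin \mathbf{R}'_5$, and (without loss of generality) that $d(v_2) = 2$; this is automatic in the $(h,3,2)$ case, and follows from symmetry in the $(h,2,2)$ case. Since $\mathbf{R}'_4 \subseteq \mathbf{R}_5 \subseteq \mathbf{R}'_5$ and $d(v_1) \leq 3 \leq 8$, I would first record that $v_1 \in L_8 \setminus \mathbf{R}'_4$, which is the ambient set used in the definition of $R_5$.

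Next I would invoke Lemma \ref{lemma h22 h23 bad pairs} to conclude that both $hv_1$ and $hv_2$ are bad pairs, and Lemma \ref{lemma h22 h23 house} to conclude that no pair $a,b \in V(G)\setminus\{h,v_1,v_2\}$ satisfies $\{v_1,a,b\},\{v_2,a,b\} \in E(G)$. Taking $u = v_1$ and $t = v_2$, all three bulleted conditions defining $R_5$ are now met by the edge $\{h,v_1,v_2\}$: $h \in M$ with $d(t) = 2$, both $hu$ and $ht$ are bad pairs, and no common pair of vertices completes the forbidden co-extension. Therefore $v_1 \in R_5 \subseteq \mathbf{R}_5 \subseteq \mathbf{R}'_5$, contradicting the assumption $v_1 \notin \mathbf{R}'_5$.

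This contradiction shows that at least one of $v_1, v_2$ lies in $\mathbf{R}'_5$. To upgrade to the full statement that \emph{both} lie in $\mathbf{R}'_5$, I would exploit the closure step built into $\mathbf{R}'_5 = \mathbf{R}_5 \cup (L \cap N(L_8 \cap \mathbf{R}_5))$: the contradiction argument actually places whichever of $v_1,v_2$ we chose for the role of $u$ into $R_5 \subseteq \mathbf{R}_5$, and since that vertex has degree at most $3 \leq 8$, it lies in $L_8 \cap \mathbf{R}_5$; its partner in the edge is a low vertex adjacent to it, hence lies in $L \cap N(L_8 \cap \mathbf{R}_5) \subseteq \mathbf{R}'_5$. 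In the symmetric $(h,2,2)$ case the roles of $v_1$ and $v_2$ can be swapped, and in the asymmetric $(h,3,2)$ case this one-sided closure already covers the degree-$3$ vertex, giving both endpoints in $\mathbf{R}'_5$.

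The entire argument is essentially bookkeeping, so I do not expect any serious obstacle. The only care needed is in tracking the chain of containments $\mathbf{R}'_4 \subseteq \mathbf{R}_5 \subseteq \mathbf{R}'_5$ to verify that membership in the appropriate ``ambient'' set holds when the $R_5$ definition is invoked, and in noting that the closure of $\mathbf{R}'_5$ by low neighbors of $L_8 \cap \mathbf{R}_5$ is exactly what promotes ``one endpoint in $R_5$'' to ``both endpoints in $\mathbf{R}'_5$.''
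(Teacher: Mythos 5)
Your first two paragraphs capture exactly what the paper intends: under the assumption $v_1, v_2 \notin \mathbf{R}'_5$, Lemmas~\ref{lemma h22 h23 bad pairs} and~\ref{lemma h22 h23 house} verify the three clauses in the definition of $R_5$ with $u=v_1$, $t=v_2$, so $v_1 \in R_5 \subseteq \mathbf{R}'_5$, a contradiction. That part is fine.

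The third paragraph, however, contains a genuine logical error. When you write ``the contradiction argument actually places whichever of $v_1,v_2$ we chose for the role of $u$ into $R_5$,'' you are carrying an intermediate conclusion outside the refuted branch. The claim $v_1\in R_5$ was derived \emph{under} the assumption that both $v_1,v_2\notin\mathbf{R}'_5$; once that assumption is shown to be false, the intermediate claim is not available unconditionally. All a proof by contradiction yields here is the negation of the assumption, namely that at least one of $v_1,v_2$ lies in $\mathbf{R}'_5$. In particular, your argument does not rule out the scenario $v_1\notin\mathbf{R}'_5$ while $v_2\in\mathbf{R}'_5\setminus\mathbf{R}_5$, in which case neither Lemma~\ref{lemma h22 h23 bad pairs} nor Lemma~\ref{lemma h22 h23 house} can be cited directly (their stated hypothesis requires both endpoints outside $\mathbf{R}'_5$), and the closure step has nothing to grab onto because $v_2\notin L_8\cap\mathbf{R}_5$.

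The gap can be closed, but it takes a bit more than what you wrote. Suppose only $v_1\notin\mathbf{R}'_5=\mathbf{R}_5\cup(L\cap N(L_8\cap \mathbf{R}_5))$. Then the low vertex $v_1$ has no neighbor of degree at most $8$ in $\mathbf{R}_5$, and since $d(v_2)\le 3\le 8$ this forces $v_2\notin\mathbf{R}_5$, hence $v_2\notin\mathbf{R}_4$ (and $v_1\notin\mathbf{R}_4$ as well, since $\mathbf{R}_4\subseteq\mathbf{R}'_5$). One then checks that the proofs of Lemmas~\ref{lemma h22 h23 bad pairs} and~\ref{lemma h22 h23 house} invoke the hypothesis ``$v_1,v_2\notin\mathbf{R}'_5$'' only through $v_1,v_2\notin\mathbf{R}_4$ (via Claim~\ref{claim degree d d neighbor} and the definition of $R_4$), so both conclusions hold in this weaker setting; then $v_1\in R_5\subseteq\mathbf{R}'_5$, a contradiction. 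The symmetric argument handles the assumption $v_2\notin\mathbf{R}'_5$. This unwinding of the lemmas' actual hypotheses is the missing piece; note that the paper's one-line justification glosses over it as well.
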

	
	\section{Lower Bound: \texorpdfstring{$1$}{2}-flat \texorpdfstring{$3$}{2}-vertices}\label{section 3 vertex}
	
		Let $R_6$ be the set of $2$-flat $3$-vertices  $u\in L\setminus \mathbf{R}'_5$  such that there exists an
	edge $\{h,u,v\}$ where $h\in M$ and $v$ is a $1$-flat $3$-vertex. Let $\mathbf{R}_6=R_6\cup \mathbf{R}_5' $, and  let 
	$\mathbf{R}'_6= \mathbf{R}_6\cup (L\cap N(L_8\cap \mathbf{R}_6)) $.

	\begin{lemma}\label{lemma few h33 edges with 1flat and 2flat}
	$|R_6|\leq 32{\ell}^2$. 
	\end{lemma}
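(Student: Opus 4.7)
My plan is to mirror the proof of Lemma~\ref{no 1-1flat}, together with a case split to handle one remaining obstruction. First, I would reduce to the case $h\in\{x,y\}$. Because $v$ is a $1$-flat $3$-vertex, its unique $M$-neighbor is $h$ and $v\in L_8$; if $v$ lay in $\mathbf{R}_5\supseteq \mathbf{R}_1$, then the low neighbor $u$ would lie in $L\cap N(L_8\cap \mathbf{R}_5)\subseteq \mathbf{R}'_5$, contradicting $u\notin\mathbf{R}'_5$. Hence $v\notin\mathbf{R}_1$, and since $v$ has only one $M$-neighbor it is not in $Q$; Lemma~\ref{lemma x and y in H} then gives $v\in N(\{x,y\})$, forcing $h\in\{x,y\}$. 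Writing $U(h)$ for the set of $u\in R_6$ whose witness uses this $h$, it suffices to prove $|U(h)|\leq 16\ell^2$ for each $h\in\{x,y\}$, as then $|R_6|\leq 32\ell^2$.

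Next, I would fix $h\in\{x,y\}$, assume $|U(h)|>16\ell^2$, and pick any witness $e_1=\{h,u_1,v_1\}$. Setting $A_1:=L\cap N[\{u_1,v_1\}]$ and $A_2:=N[A_1]\setminus\{h\}$, the bound $|A_1|=O(1)$ (since $u_1,v_1$ are $3$-vertices) gives $|A_2|=O(\ell)$ (since all vertices of $A_1$ are low). Because each $1$-flat $3$-vertex has a unique $M$-edge, each $u\in U(h)$ is uniquely paired with its partner $v$; removing the $u$'s in $A_2$ together with the $u$'s whose partner lies in $A_2$ excludes only $O(\ell)$ elements, so a second witness $e_2=\{h,u_2,v_2\}$ with $u_2,v_2\notin A_2$ exists, and consequently no edge of $u_2$ or of $v_2$ meets $A_1$.

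I would then examine the non-edge $\{v_1,v_2,u_1\}$ (which is not in $E(G)$, since $v_1$'s only $h$-edge is $\{h,u_1,v_1\}$ and $v_2\neq h$) and the $C_3^{(3)}$ that saturation guarantees in $G+\{v_1,v_2,u_1\}$. The key case is when the core pair is $\{v_1,v_2\}$, and it directly parallels the final step of Lemma~\ref{no 1-1flat}: the equalities $d(hv_1)=d(hv_2)=1$ together with the requirement that the link avoid $u_1$ forbid $h$ from appearing in the link at all, so both link edges are low; their center would then have to lie simultaneously in a low edge of $v_1$ (contained in $A_1$) and in a low edge of $v_2$ (disjoint from $A_1$), which is impossible.

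I expect the main obstacle to be the cases where $u_1$ lies in the core pair: there the link can legally use either a second $h$-edge at $u_1$ (when $d(hu_1)=2$) or a low-edge neighbor shared by $u_1$ and $v_1$. To handle these I plan to partition $U(h)=U_1(h)\sqcup U_2(h)$ according to whether $d(hu)\in\{1,2\}$ and bound each part by $8\ell^2$. On $U_1(h)$, the uniqueness $d(hu)=1$ rules $h$ out of any link exactly as above, and the shared-neighbor route is killed by the fact that $u_2$ and $v_2$ avoid $A_1$ entirely. On $U_2(h)$, Lemma~\ref{coneighbors} applied with $d(hu)=2$ forces the second $h$-edge $\{h,u,e\}$ to contain a vertex $e$ of a restricted type, since the partner $v$ is a $1$-flat $3$-vertex and thus fails both alternatives of conclusion~(a) as well as the ``$1$-flat $2$-vertex'' exclusion in~(b); rerunning the neighborhood-plus-non-edge argument with $\{h,u,e\}$ in place of $\{h,u,v\}$ then produces the same kind of contradiction. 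Summing the two bounds over $h\in\{x,y\}$ yields $|R_6|\leq 32\ell^2$.
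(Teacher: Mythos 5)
Your reduction to $h\in\{x,y\}$ is correct and matches the paper, and the case where the core pair of the $C_3^{(3)}$ in $G+\{v_1,v_2,u_1\}$ is $\{v_1,v_2\}$ is handled cleanly by the same distance argument as in Lemma~\ref{no 1-1flat}: both link edges are low, so the center lies in $A_1$, while $v_2\notin A_2$ makes that impossible. But that is the easy case, and the remaining cases are where your proof has a genuine gap. First, a small point you did not notice: since $v\notin\mathbf{R}_3$ (because $u\notin\mathbf{R}'_3\subseteq\mathbf{R}'_5$), the edge $\{h,u,v\}$ being $(M,L,L)$ with $d(hv)=1$ would put $v\in R_3$; hence $d(hu)=1$ is impossible, and your $U_1(h)$ is empty. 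So the only live case is $d(hu_1)=2$, i.e., there is a second edge $e_0=\{h,u_1,w\}$.

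The real problem is the core-pair case $\{v_1,u_1\}$ (and symmetrically $\{v_2,u_1\}$). A $v_1,u_1$-link avoiding $v_2$ must use a low edge $f_1\ni v_1$ (since $v_1$'s only non-low edge is $e_1$), and the second edge $f_2\ni u_1$ may legitimately be $e_0=\{h,u_1,w\}$ with center $c=w$, provided $w\in N(v_1)$. Nothing in your setup forbids $w\in N(v_1)$: $w$ is a low vertex adjacent to $u_1$, so $w\in A_1$, and $v_1\in A_1$ as well, so the restriction on $u_2,v_2$ tells you nothing about the pair $(v_1,w)$. The resulting configuration $e_1,e_0,f_1=\{v_1,w,z\}$ is internally consistent (the only pair of these three edges sharing two vertices is $e_1,e_0$, which is allowed), and it supplies the required triangle in $G+\{v_1,v_2,u_1\}$ with no contradiction. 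Your appeal to Lemma~\ref{coneighbors} only constrains the \emph{type} of $w$ (it cannot be, say, a $1$-flat $2$-vertex); it does not preclude $w\in N(v_1)$, which is what you would need. And ``rerunning the neighborhood-plus-non-edge argument with $\{h,u,w\}$'' is not a well-defined substitution: $w$ need not be a $1$-flat $3$-vertex, so $\{h,u,w\}$ is not a witness edge of the $R_6$ form, and there is no second endpoint to play the role of $v_2$ in a new non-edge.

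This is exactly the difficulty the paper's proof is built to overcome, and its structure is fundamentally different from yours. The paper first observes $d(hu)=2$, names $a$ the third vertex of the second $h,u$-edge, and then splits on whether $hu$ is a bad or good pair. In the bad-pair subcase it adds $\{h,u,u'\}$ for two vertices with $hu$, $hu'$ bad, so the core pair is forced to be $\{u,u'\}$, giving an $8\ell$ bound. In the good-pair subcase — which is the analogue of your unresolved case — it extracts a six-vertex ``configuration'' $\{u,v,a,b,s,s'\}$ with several forced edges, proves that $\{h,s,s'\}\in E(G)$ and is the \emph{unique} edge joining $h$ to $\{s,s'\}$, and only then obtains the contradiction by adding the non-edge $\{s_u,s'_u,u'\}$ for a suitable far-away $u'$. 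None of this structure is present in your sketch, and I do not see how to avoid it by tweaking the choice of non-edge.
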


	\begin{proof}
		For $h\in M$, let $\mathcal{F}(h)$ denote the set of $2$-flat $3$-vertices  $u\in R_6$ contained in some edge $\{h,u,v\}$
		where $v$ is a $1$-flat $3$-vertex. Since $v$ is a $1$-flat $3$-vertex and $u\notin \mathbf{R}'_1$, $v\notin \mathbf{R}_1$. So, $h\in \{x,y\}$. Assume  that $|\mathcal{F}(h)|\geq 16\ell^2+1$.
		
		We first describe a certain structure that all but $8\ell$ vertices in $\mathcal{F}(h)$ are contained in. Let $e=\{h,u,v\}$ be an edge of $G$ with $u\in\mathcal{F}(h)$ and $v$ a $1$-flat $3$-vertex. Since $u\notin \mathbf{R}'_3$, $v\notin \mathbf{R}_3$.
		Hence 
		 $d(\{u,h\})\geq 2$,  so by the case, $d(\{u,h\})=2$. Let $a$ be a vertex such that $\{h,u,a\}\in E(G)$. Since $u$ is $2$-flat, note that $a$ is low.

	Since $|N(u)\setminus \{h\}|\leq 4$ and every vertex in $N(u)\setminus \{h\}$ is low (and thus has at most $2\ell$ neighbors),
	%	Consider the possibility that $hu$ is a bad pair. Note that
	%	\[
		$|N(N(u)\setminus\{h\})|\leq 8\ell.$
	%	\]
	Thus, if there are at least $8\ell+1$ vertices in $\mathcal{F}(h)$ that are in a bad pair with $h$, then we can find some $u'\in \mathcal{F}(h)\setminus N(N(u)\setminus\{h\})$ such that $hu'$ is also a bad pair. Consider the non-edge $\{h,u,u'\}$. Since $hu$ and $hu'$ are both bad pairs, there must be a  $u,u'$-link that avoids $h$. But since $u'\not\in N(N(u)\setminus\{h\})$, there is no such link,  a contradiction. Thus, there are at most $8\ell$ vertices in $\mathcal{F}(h)$ that are in a bad pair with $h$.
		
		Let $\mathcal{F}_{good}(h)$ denote the set of vertices in $\mathcal{F}(h)$ that are in a good pair with $h$. Then,
		\[
		|\mathcal{F}_{good}(h)|\geq |\mathcal{F}(h)|-8\ell\geq 8\ell^2+1.
		\]
		Assume now that $u\in\mathcal{F}_{good}(h)$, with $\{h,u,v\}$ and $\{h,u,a\}$ edges of $G$ with $v$ a $1$-flat $3$-vertex. Since $hu$ is a good pair, there exist edges $\{u,w_1,w_2\}$ and $\{w_2,w_3,h\}$ forming a  $u,h$-link. Note that to avoid a $C_3^{(3)}$ in $G$, we must have $v,a\in \{w_1,w_2,w_3\}$. Since $d(\{h,v\})=1$, we have  $v=w_1$. 
		
		If $a=w_2$, then consider the non-edge $\{h,v,a\}$. Note that any link between two of the vertices in $\{h,v,a\}$ that does not contain the third  cannot contain $u$ since every edge containing $u$ contains two vertices in $\{h,v,a\}$. Thus, if there were  such a link, say a $z_1,z_2$-link with $z_1,z_2\in \{h,v,a\}$, then there would be a $C_3^{(3)}$ in $G$ consisting of this link and the edge $\{u,z_1,z_2\}$,  a contradiction. Thus,  $a\neq w_2$, so $a=w_3$. For ease of notation, let us define $b:=w_2$. 
		
		Let $\{v,s,s'\}$ denote the third edge of $G$ containing $v$. We claim that $s,s'\not\in \{u,a,b\}$. Indeed, $u\not\in \{s,s'\}$ by Claim~\ref{claim degree d d neighbor}. If $a\in \{s,s'\}$, then the non-edge $\{u,v,a\}$ intersects every edge containing $u$ and $v$ in two vertices, and thus adding it to $G$ does not create a $C_3^{(3)}$. If $b\in \{s,s'\}$, then since $u,a\not\in \{s,s'\}$, the edges $\{h,u,v\}$, $\{v,s,s'\}$ and $\{h,a,b\}$ form a $C_3^{(3)}$ in $G$, a contradiction. Thus, $s,s'\not\in \{u,a,b\}$ as claimed. Note that this also implies that $uv$ is a bad pair since any  $u,v$-link would necessarily use the edges $\{h,u,a\}$ and $\{v,s,s'\}$, but $\{h,a\}\cap\{s,s'\}=\emptyset$.
		
		At this point, we define the \emph{configuration} of $u\in \mathcal{F}_{good}(h)$ to be the collection of vertices $\{u,v,a,b,z,z'\}$. We wish to speak of the configurations associated with multiple vertices from $\mathcal{F}_{good}(h)$ simultaneously, so given $u\in \mathcal{F}_{good}(h)$, we will let $v_u,a_u,b_u,s_u$ and $s'_u$ denote the vertices in the configuration of $u$ playing the roles of $v,a,b,z$ and $s'$ as described above. Note that while there is a unique choice of $v_u,a_u$ and $b_u$, the vertices $s_u$ and $s'_u$ are interchangeable, so we may arbitrarily choose which vertex in $N(v_u)\setminus\{h,u,b\}$ is $s_u$ and which is $s'_u$. We will label the configuration of $u$ as $\mathcal{C}_u$. We have proven the following about $\mathcal{C}_u$ for any $u\in\mathcal{F}_{good}(h)$:
		\begin{itemize}
			\item The edges $\{h,u,v_u\}, \{h,u,a_u\}, \{h,a_u,b_u\}, \{u,v_u,b_u\}$ and $\{v_u,s_u,s'_u\}$ are all present in $G$,
			\item The vertices $h,u,v_u,a_u,b_u,s_u$ and $s'_u$ are all distinct, and
			\item The pair $uv_u$ is a bad pair.
		\end{itemize}
		
		Now, given a vertex $w\in L$ and a vertex $u\in \mathcal{F}_{good}(h)$, note that $w$ is adjacent in $G-h$ to a vertex in $\mathcal{C}_u$ if and only if either $u$ or $v_u$ are in $N[N(w)\cap L]$. Since a vertex can play the role of $u$ or $v_u$ in at most one configuration, this implies that $w$ is adjacent to at most $|N[N(w)\cap L]|\leq 4\ell^2$ configurations.
		
		We claim that for every $u\in\mathcal{F}_{good}(h)$, $\{h,s_u,s'_u\}\in E(G)$, and no other edges containing $h$ and one of $s_u$ or $s'_u$ are in $E(G)$. Indeed, first consider the case where $s_u\not\in N(h)$. Then $s_u$ is adjacent to at most $4\ell^2$ configurations, so there exists a $u'$ such that $s_u$ is not adjacent to $\mathcal{C}_{u'}$. Consider the non-edge $\{s_u,u',v_{u'}\}$. Since $u'v_{u'}$ is a bad pair, there must be a loose  path of length $2$ from a vertex in $\{u',v_{u'}\}$ to $s_u$, but since $s_u$ is not adjacent to any vertex in $\mathcal{C}_{u'}\cup \{h\}=N(\{u',v_{u'}\}$, this is a contradiction. Thus, $s_u\in N(h)$. Now consider the possibility that we have an edge $\{s_u,h,z\}$ for some $z\neq s'_u$. Then clearly $z\not\in\{u,v_u\}$, and so $\{h,s_u,z\}$, $\{v_u,s_u,s'_u\}$ and $\{h,u,v_u\}$ form a $C_3^{(3)}$ in $G$, a contradiction. Thus, the only edge containing $h$ and $s_u$ is $\{h,s_u,s'_u\}$. By the symmetry between $s_u$ and $s_u'$, this also is the only edge containing $h$ and $s'_u$ as claimed.
		
		Now, given any $u\in\mathcal{F}_{good}(h)$, there are at most $8\ell^2<|\mathcal{F}_{good}(h)|$ configurations adjacent to either $s_u$ or $s'_u$ in $G-h$, so there exists some $u'\in \mathcal{F}_{good}(h)$ such that neither $s_u$ or $s_u'$ are adjacent to $\mathcal{C}_{u'}$. Consider the non-edge $\{s_u,s_u',u'\}$, and a triangle $T$ in $G+\{s_u,s_u',u'\}$. By the choice of $u'$, $u'$ cannot be a core vertex in $T$, and thus $s_us_u'$ is a good pair, but no edge of $G$ contains $h$ and exactly one of these vertices, so the  $s_u,s_u'$-link, along with the edge $\{s_u,s_u',h\}$ gives a $C_3^{(3)}$ in $G$, a contradiction.
		
		Thus for each $h\in \{x,y\}$, $|\mathcal{F}(h)|\leq 16\ell^2$. Therefore, $|R_6|\leq 32{\ell}^2$.
	\end{proof}
	
	We will call a vertex $v\in L$ \emph{supported} if $G$ contains a $(M,M,v)$ edge, and \emph{unsupported} otherwise. Let $R_7$ be the set of unsupported $3$-flat $3$-vertices  $u\in L\setminus \mathbf{R}'_6$ adjacent to at least two $1$-flat $3$-vertices. Let $\mathbf{R}_7=R_7\cup \mathbf{R}_6' $, and let 
	$\mathbf{R}'_7= \mathbf{R}_7\cup (L\cap N(L_8\cap \mathbf{R}_7))$.

	\begin{lemma}\label{lemma 3flat 3vertices are adjacent to only 1 1flat 3vertex}
			$|R_7|\leq 2n/\ell$.
	\end{lemma}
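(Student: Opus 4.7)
The plan is to first pin down the rigid local structure of any $u\in R_7$ using the exclusions $\mathbf{R}_1',\ldots,\mathbf{R}_6'$, and then route the count through $|M|$ rather than through $d(x)+d(y)$. Since $u$ is $3$-flat and unsupported, each of $u$'s three edges is an $(M,L,L)$ edge $e_i=\{h_i,u,w_i\}$ with $h_i\in M$ and $w_i\in L$, and by hypothesis two of the $w_i$'s, say $w_1,w_2$, are $1$-flat $3$-vertices. Since $u\notin\mathbf{R}_6'$ and $w_j\in L_8$, we inherit $w_j\notin\mathbf{R}_1'$, and being $1$-flat with a unique $M$-incidence, $w_j\notin Q$, so $w_j\in N(\{x,y\})$ and $h_j\in\{x,y\}$. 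Next, $w_j\notin R_3$ gives $d(uh_j)\ge 2$, and $h_1\ne h_2$ would force at least four $M$-incidences at $u$, contradicting $3$-flatness. Hence $h_1=h_2=:h\in\{x,y\}$; moreover $h_3\ne h$ (else $d(uh)=d(u)=3$ puts $u\in R_4\subseteq \mathbf{R}_6'$), so $d(uh)=2$ and $d(uh_3)=1$.

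Write $R_7=R_7^x\cup R_7^y$ according to which of $\{x,y\}$ plays the role of $h$; it suffices to show $|R_7^h|\le n/\ell$. The baseline bipartite count uses that each $u\in R_7^h$ has two $1$-flat $3$-vertex partners $w_1^u,w_2^u$ canonically attached to it via $\{h,u,w_j^u\}$, and since $d(hw_j^u)=1$ by $1$-flatness the pairs $\{w_1^u,w_2^u\}$ are disjoint across $R_7^h$, giving $2|R_7^h|\le|T_h|$ for $T_h$ the set of $1$-flat $3$-vertex neighbors of $h$. This is only $\le d(h)/2$ and far too weak on its own.

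To sharpen, I would exploit the non-edge $\{h,u,h_3\}$, which is genuinely a non-edge since both $h$-edges of $u$ end in $L$ whereas $h_3\in M$. The $C_3^{(3)}$ that saturation forces in $G+\{h,u,h_3\}$ cannot have $u$ as a core vertex (every edge through $u$ meets $\{h,u,h_3\}$ in two vertices), so the completion must consist of $\{h,u,h_3\}$ together with edges $\{h,v,a\},\{h_3,v,b\}$ meeting at some vertex $v=v(u)\in N(h)\cap N(h_3)$. I would then argue that the assignment $u\mapsto h_3$ is near-injective: if two distinct $u,u'\in R_7^h$ shared the same $h_3$, then comparing the two such triangles together with the disjoint partner edges $\{h,u,w_j^u\},\{h,u',w_j^{u'}\}$, and using $u,u'\notin\mathbf{R}_5'$ to keep the auxiliary vertices $v,a,b$ well-behaved, would produce an honest $C_3^{(3)}$ in $G$, contradicting $C_3^{(3)}$-saturation. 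Combined with $|M|\le 6n/\ell$ this yields $|R_7^h|\le n/\ell$ up to lower-order corrections, and summing over $h\in\{x,y\}$ gives the lemma. The hard part is precisely this final injectivity verification: ensuring the triangle comparison actually produces a forbidden $C_3^{(3)}$ in $G$ requires carefully tracking how the auxiliary vertices $v,a,b,w_j^u,w_3$ interact across the two candidate configurations, and is where the accumulated exclusions from the earlier $\mathbf{R}_i'$ garbage sets become essential.
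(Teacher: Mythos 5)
Your first paragraph (the structural pinning: $h_1 = h_2 = h \in \{x,y\}$, $h_3 \neq h$, $d(uh) = 2$, $d(uh_3) = 1$) is correct and matches the paper closely. The problem lies in the third paragraph, and it is fatal.

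Your proposed mechanism is near-injectivity of the map $u \mapsto h_3(u)$, but the paper's proof establishes essentially the opposite. The paper considers the non-edge $\{x,u,u'\}$ for $u,u' \in X$ (the set of $u \in R_7$ with $h = x$). Since $ux$ and $u'x$ are both bad pairs, the resulting triangle must have core vertices $u,u'$, so a $u,u'$-link must exist in $G - x$. The only edge through $u$ avoiding $x$ is $e_3 = \{u,v_3,h_3\}$, so the link center is $v_3$ or $h_3$; since $v_3$ is low this forces, for all but $O(\ell)$ choices of $u'$, the center to be $h_3$ — which means $h_3(u') = h_3(u)$. So if $|X| > n/\ell$, there is a single $h_3$ shared by $n/\ell - O(\ell)$ elements of $X$. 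The map $u \mapsto h_3(u)$ is therefore near-constant on a large set, not near-injective, and the entire remaining work of the paper's proof (the $X_1$ analysis, the edges $g_1, g_2, e_4$ through $e_8$, equations (44)--(45), and Cases 1 and 2) is devoted to extracting a contradiction from this highly degenerate configuration. That structural work is precisely what is missing from your proposal, and it cannot be replaced by an injectivity argument.

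Separately, the non-edge you propose to exploit, $\{h,u,h_3\}$, is the wrong one. Because $u$ cannot be a core vertex in the resulting triangle, the triangle consists of the new edge together with an $h,h_3$-link in $G$. But such a link exists whenever $hh_3$ is a good pair, irrespective of $u$, and its center $v$ need have no relation to $u$'s neighborhood. The only nontrivial consequence you can extract is the (easy) fact that $hh_3$ is good; this does not distinguish between different $u$'s sharing the same $h_3$. The paper's non-edge $\{x,u,u'\}$ is the one that produces usable structure, because it forces a $u,u'$-link that must pass through both $u$'s and $u'$'s restricted neighborhoods. Finally, even granting near-injectivity, routing through $|M| \leq 6n/\ell$ gives a bound of order $12n/\ell$ rather than $2n/\ell$; this is not fatal for the paper's downstream use (which only needs $O(n/\ell)$), but your "up to lower-order corrections" phrasing glosses over a constant-factor discrepancy that the argument, even if repaired, would not close.
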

	
\begin{proof}	
 Suppose $|R_7|> 2n/\ell$. Let
 $u\in R_7$ and $v_1$ and $ v_2$ be $1$-flat $3$-neighbors of $u$. Since $u$ is unsupported, and is 
	contained in edges $e_1,e_2$ and $e_3$, where
	$e_i=\{u,v_i,h_i\}$,  $h_1,h_2,h_3\in M$ and $v_1$ and $v_2$ are $1$-flat $3$-vertices (possibly, some $h_i$s and/or $v_i$s coincide). In this case the set $U(u)=\{u,v_1,v_2,v_3\}$ will be called the {\em $u$-set}.
	
Since $v_1,v_2\notin \mathbf{R}_2$, $h_1,h_2\in \{x,y\}$ and the pairs $h_1v_1$ and $h_2v_2$ are good.
 Since $h_1v_1$ is a good pair, the codegree of either $v_1u$ or $h_1u$ is at least $2$. On the other hand, since
	$v_1$ is $1$-flat and $u$  is $3$-flat and unsupported, the codegree of  $v_1u$ is $1$. Thus  the codegree of  $h_1u$ is at least $2$.
	Similarly,  the codegree of  $h_2u$ is at least $2$. It follows that $h_1=h_2$.
	If also $h_3=h_1$, then $d(u)=d(h_1u)$, contradicting the fact that $u\notin \mathbf{R}_4$.

	Let $X$ be the set of $u\in R_7$ such that $h_1=x$.
	Since $|R_7|> 2n/\ell$, by  the symmetry between $x$ and $y$ we may assume that $|X|>n/\ell$.
	
	Let $u\in X$. Since
	$v_1$ and $v_2$ are $1$-flat, $v_3\notin \{v_1,v_2\}$. This implies that $ux$ is a bad pair. Since for every two distinct
	$u,u'\in \mathcal{Q}_2(x)$ we can try to add the edge $\{x,u,u'\}$ and the pairs $ux$ and $u'x$ are bad, the pairs
	$uu'$ are good in $G-x$. Since $v_1,v_2,v_3$ are low, the number of $u'\in X$ such that
	$h_3$ is adjacent to $u'$ is at least 
	$$|X|-3(2\ell)>n/\ell-6\ell.$$
	Denote the set of such $u'$ (including our $u$) by $X_1$.
	
	If there is an edge $f=\{x,h_3,w\}$ containing both $x$ and $h_3$, then there is a $u\in X_1$ such that $w$ is not in the $u$-set, and so $G$ has a triangle formed by $e_1,e_3$ and $f$, a contradiction. Thus $h_3\notin N(x)$. 
	
	Now, if for some $u\in X_1$ there is an edge $g=\{x,v_3,w\}$ containing $h_1$ and $v_3$, then again
	there is a $u'\in X_1$ such that $w$ is not in the $u'$-set, and so $G$ has a triangle formed by $e'_1,e'_3$ and $g$, a contradiction. Thus $v_3\notin N(x)$ for each $u\in X_1$.
	
	Consider again a $u\in X_1$. Since $\{x,v_1\}$ is a good pair, there are edges $g_1$ and $g_2$ forming a 
	$v_1,x$-link. In view of $e_1$, $u\in g_1\cup g_2$. Let $v_1\in g_1$. 
	Since the codegree of $v_1u$ is $1$, $u\in g_2-g_1$. It follows that
	$g_2=e_2$ and $v_2\in g_1$. So, we may assume $g_1=\{v_1,v_2,w\}$.
	
	We claim that
	\begin{equation}\label{44}
		w\notin N(x).
	\end{equation}
	Indeed, suppose $G$ has edge $e_4=\{x,w,w_1\}$. Since $v_1$ and $v_2$ are $1$-flat, $w_1\notin \{v_1,v_2\}$. Since codegree of $ux$ is two, $w_1\neq u$. But then $G$ has a triangle formed by $e_4,e_1$ and $g_1$, a contradiction. 
	This proves~\eqref{44}.
	
	Let $G'$ be obtained from $G$ by adding edge $e_5=\{x,v_1,v_3\}$. Then $G'$ must have a triangle $T$ formed by $e_5$ and some edges $f_1$ and $f_2$. If the core vertices of $T$ are $x$ and $v_1$, then
	in view of $e_1$, $u\in f_1\cup f_2$. Let $v_1\in f_1$. Since the codegree of $\{v_1,u\}$ is $1$, $u\in f_2-f_1$. 
	But both edges in $G$ containing $\{x,u\}$ have two common vertices with $e_5$, a contradiction. 
	If the core vertices of $T$ are $x$ and $v_2$, then we get a similar contradiction. Hence
	the core vertices of $T$ are $v_1$ and $v_2$. Let $v_1\in f_1$. 
	In view of $e_4$, $w\in f_1\cup f_2$. By the symmetry between $v_1$ and $v_2$, we may assume $w\in f_1$, say 
	$f_1=\{v_1,w,w_1\}$.
	
	We claim that
	\begin{equation}\label{45}
		w_1\notin N(x).
	\end{equation}
	Indeed, suppose $G$ has edge $e_6=\{x,w_1,w_2\}$. By~\eqref{44}, 
	$w_2\neq w$. Since we know all edges incident to $v_1$ and $u$,
	$w_2\notin \{v_1,u\}$. 
	But then $G$ has a triangle formed by $e_6,e_1$ and $f_1$, a contradiction. 
	This proves~\eqref{45}.
	
	By the definition of $f_1$ and $f_2$, $f_1\cap f_2$ is either $\{w\}$ or $\{w_1\}$.
	
	{\bf Case 1:} $f_1\cap f_2=\{w\}$, say $f_2=\{w,v_2,w_3\}$. Then we know all edges incident to $v_2$.
	Since $|X_1|>n/\ell -6\ell$, $v_1$ is not adjacent to $h_3$, and $w$ and $w_1$ are low vertices not adjacent to $x$ (by~\eqref{44}
	and~\eqref{45}), there exists $u'\in X_1$ such that $v_1'$ is at distance at least $3$ from $\{w,w_1\}$.
	
	Let $G'$ be obtained from $G$ by adding edge $e_7=\{w,w_1,v'_1\}$. Then $G'$ must have a triangle $T$ formed by $e_7$ and some edges $j_1$ and $j_2$. By the choice of $v'_1$, the core vertices of $T$ in $e_7$ are $w$ and $w_1$. Let $j_1$ contain $w_1$. In view of $f_1$, $v_1\in j_1\cup j_2$. We know all edges containing $v_1$, in particular, we see that the codegree of
	$\{w_1,v_1\}$ is $1$. Also,
	the only candidate for  $j_2$ is $g_1$. It follows that $j_2=g_1$ and $j_1\cap j_2=\{v_2\}$. But we also know all edges containing $v_2$, and none of them contains $w_1$, a contradiction.
	
	{\bf Case 2:} $f_1\cap f_2=\{w_1\}$, say $f_2=\{w_1,v_2,w_3\}$. Again,  we know all edges incident to $v_2$.
	Let $G'$ be obtained from $G$ by adding edge $e_8=\{v_1,w_1,v_2\}$. Then $G'$ must have a triangle $T$ formed by $e_8$ and some edges $j_1$ and $j_2$. 
	
	Suppose first that the core vertices of $T$ in $e_8$ are $v_1$ and $v_2$. Note that each of the edges $g_1,f_1$ and $f_2$ shares two vertices with $e_8$. Thus the only candidates for $j_1$ and $j_2$ are $e_1$ and $e_2$, but they have two common vertices, a contradiction.
	
	Suppose now that the core vertices of $T$ in $e_8$ are $v_1$ and $w_1$. Suppose that $v_1\in j_1$. Again, the only candidate for $j_1$ is $e_1$. On the other hand, in view of $f_1$, $w\in j_1\cup j_2$. Hence $w\in j_2-j_1$. But the third vertex of $j_2$ cannot be $x$ (by~\eqref{44}) and cannot be $u$ because we know all neighbors of $u$ and $w$ is not in this list.
	
	Finally, suppose now that the core vertices of $T$ in $e_8$ are $v_2$ and $w_1$. Suppose that $v_2\in j_1$. Now
	the only candidate for $j_1$ is $e_2$. Similarly to the previous paragraph,  in view of $f_2$, $w_3\in j_1\cup j_2$, and
	hence $w_3\in j_2-j_1$. Again,  the third vertex of $j_2$ cannot be $x$ (now by~\eqref{45}) and cannot be $u$ because we know all neighbors of $u$, and $w_1$ is not in this list. This finishes the proof of the lemma.
	\end{proof}

	\section{Lower Bound: Vertices of degree \texorpdfstring{$2$}{2} }\label{section 2 vertex}
	
	The goal of this section is to provide the results necessary to show that almost all $2$-vertices end up with charge at least $4$. Note that $3^+$-flat $2$-vertices are automatically supported, and thus will receive charge at least $4$, and $2$-flat $2$-vertices that are not supported should receive charge $1$  from each of their non-low neighbors, again leaving them satisfied. Thus, our main obstacle is $1$-flat $2$-vertices.
	
	As a reminder to the reader, we expect a $1$-flat $2$-vertex $t$ to get charge $1$  from its non-low neighbor, and 
	charge $1/2$  from two of its low neighbors, in particular, one low neighbor in each edge containing $t$ (possibly the same vertex twice if $t$ has a double neighbor ). One result which will be helpful is that there are almost no edges containing two $2$-vertices. Indeed, from
	Corollary~\ref{remM22} we already know each $(M,2,2)$ edge has two vertices in $\mathbf{R}'_5 $. Now we will show there are almost no $(L,2,2)$ edges.
	
			\subsection{Edges of type \texorpdfstring{$(L,2,2)$}{2}}
	
%	First we show that there are few edges of type $(L,2,2)$. 
For $j\geq 2$, we will say that a vertex $u$ with $d(u)\leq j$ is a {\em $j$-far neighbor of a vertex $v$} if $u\in N(v)$, and
	
	\begin{enumerate} 
		\item for each edge $\{u,u',u''\}$ containing $u$ either $v\in \{u',u''\}$ or $\{u',u''\}\cap N(v)=\emptyset$; and\label{Property jfar neighborhood of v}
		\item for each edge $e\in E(G-v)$ containing $u$, the degree in $G$ of each vertex of $e$ is at most $j$\label{Property jfar degree}
	\end{enumerate}
	
Let $R_8$ be the set of $\ell$-far neighbors $u\in L\setminus \mathbf{R}'_7$     of vertices in $H$. Let $\mathbf{R}_8=R_8\cup \mathbf{R}_7' $, and  let 
	$\mathbf{R}'_8= \mathbf{R}_8\cup (L\cap N(L_8\cap \mathbf{R}_8)) $.

	\begin{claim}\label{claim far}
		For each $j\geq 2$ every $v\in V(G)$ has at most $2j^2$ $j$-far neighbors. Consequently,
		\[
		|R_8|\leq 2\ell^2|H|.
		\]
	\end{claim}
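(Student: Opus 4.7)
The plan is to fix $v\in V(G)$, let $u_1,\dots,u_m$ denote the $j$-far neighbors of $v$, and bound $m\leq 2j^2$ by analyzing what happens when we try to add each triple $\{v,u_i,u_k\}$ to $G$. The first step is the structural claim that for distinct $u_i,u_k$ with $\{v,u_i,u_k\}\notin E(G)$, the $C_3^{(3)}$ forced in $G+\{v,u_i,u_k\}$ must have $u_i$ and $u_k$ as the two core vertices lying in the new edge. Indeed, if instead the core pair were $\{v,u_i\}$, a $v,u_i$-link would consist of edges $\{v,c,a\}$ and $\{u_i,c,b\}$, placing $c\in N(v)$; but the second edge avoids $v$, so Property~(1) of the $j$-far condition on $u_i$ forces $\{c,b\}\cap N(v)=\emptyset$, a contradiction. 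The case $\{v,u_k\}$ is ruled out symmetrically, and then applying Property~(1) to both $u_i$ and $u_k$ on the surviving $u_i,u_k$-link $\{u_i,c,a\},\{u_k,c,b\}$ yields $c,a,b\notin N(v)$, while Property~(2) gives that $c,a,b$ all have degree at most $j$.

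Next, fix $u_1$ and count the remaining $j$-far neighbors of $v$ in two cases. In Case~(i), $\{v,u_1,u_k\}\in E(G)$, so $u_k$ is determined by one of the at most $d(u_1)\leq j$ edges at $u_1$ containing $v$. In Case~(ii), $\{v,u_1,u_k\}\notin E(G)$, and the structural step produces a $u_1,u_k$-link sitting entirely in $G-v$. Bound the candidate $u_k$'s by enumerating triples $(e_1,c,e_2)$, where $e_1$ is the edge of the link at $u_1$ (at most $j$ choices), $c\in e_1\setminus\{u_1\}$ is the center (at most $2$ choices), and $e_2\neq e_1$ is the other edge of the link at $c$ (at most $d(c)-1\leq j-1$ choices, using Property~(2)). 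Property~(1) applied to any candidate $u_k\in e_2$ forces the third vertex of $e_2$ outside $N(v)$; since $c\notin N(v)$ as well, $e_2$ contains at most one vertex of $N(v)$, so each triple determines at most one candidate $u_k$. Combining the two cases gives
\[
m\leq 1+j+2j(j-1)=2j^2-j+1\leq 2j^2.
\]

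The ``consequently'' part is immediate: every $u\in R_8$ is an $\ell$-far neighbor of some $h\in H$, so applying the first assertion with $j=\ell$ and summing over $h\in H$ yields $|R_8|\leq 2\ell^2|H|$. The main obstacle is the structural step, where Property~(1) must be invoked three times (to rule out each of the two asymmetric core configurations and to place the link center $c$ outside $N(v)$), and Property~(2) is needed to bound the degrees along the link so that the enumeration of triples $(e_1,c,e_2)$ closes. Once that structural information is in hand, the counting is a routine enumeration.
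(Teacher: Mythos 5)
Your proof is correct and takes essentially the same approach as the paper's: both exploit Property~(1) to rule out core pairs containing $v$ (forcing any $u_1,u_k$-link into $G-v$ with every link vertex outside $N(v)$) and Property~(2) to bound degrees along that link. The paper packages the count as a contradiction via pigeonhole (finding a $j$-far neighbor at distance $\geq 3$ in $G-v$ from a fixed one), while you enumerate link candidates $(e_1,c,e_2)$ directly; both arguments rest on the same structural observations and yield the $2j^2$ bound.
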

	
	\begin{proof} 
		Suppose, $v$ has at least $1+2j^2$ $j$-far neighbors, and $u$ is one of them. Let $u$ be in $k$ edges of the form $\{u,v,a\}$, where $a$ is also a $j$-far neighbor of $v$. Note that by Property \eqref{Property jfar neighborhood of v} in the definition of $j$-far, $u$ is only adjacent to other $j$-far neighbors of $v$ through edges that contain $v$. Since $d(u)\leq j$, $u$ has at most $2(j-k)$ neighbors in $G-v$. By Property \eqref{Property jfar neighborhood of v}, all these neighbors are not $j$-far neighbors of $v$, and again by Property \ref{Property jfar degree}, each of them is a neighbor of at most $j-1$ other $j$-far neighbors of $v$. There are at least
		\[
		(1+2j^2)-1-2(j-k)(j-1)=2j+2k j-2k>k
		\]
		$j$-far neighbors of $v$ at distance at least $3$ from $u$ in $G-v$, and since there are at most $k$ $j$-far neighbors of $v$ adjacent to $u$ in $G$, there exists at least one $j$-far neighbor of $v$ that is not adjacent to $u$ in $G$, and is distance at least $3$ from $u$ in $G-v$. Let $w$ be one such $j$-far neighbor of $v$.
		
		Let $e=\{v,u,w\}$ and $G+e$. Assume that $e$ together with edges $e'$ and $e''$ form a triangle $\mathcal{T}$ in $G+e$. If the core vertices of $\mathcal{T}$ in $e$   are $u$ and $w$, then $u$ and $w$ are at distance at most $2$ in $G-e$, a contradiction to the choice of $w$.
		
		So by the symmetry between $u$ and $w$, assume without loss of generality that the core vertices of $\mathcal{T}$ in $e$ are $u$ and $v$, and the edge $e'$ of $\mathcal{T}$ contains $u$. But then one of the vertices in $e'\setminus\{u\}$ is in $N(v)$, a contradiction to Property \eqref{Property jfar neighborhood of v} in the definition of $j$-far.
	\end{proof}
	
	\begin{lemma}\label{lemma no (L,2,2) edges}
	All	vertices of every $(L,2,2)$ edge are  in $\mathbf{R}'_8$.
	\end{lemma}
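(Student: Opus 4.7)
The plan is to assume for contradiction that some vertex of an $(L,2,2)$ edge $e=\{u,v_1,v_2\}$ lies outside $\mathbf{R}'_8$. Since $v_1$ and $v_2$ have degree $2$ (so $v_1,v_2\in L_8$), if either one lies in $\mathbf{R}_8$, then by the closure operation $L\cap N(L_8\cap\mathbf{R}_8)$ all three vertices land in $\mathbf{R}'_8$. Hence it suffices to derive a contradiction from $v_1,v_2\notin\mathbf{R}_8$, and in particular from $v_1,v_2\notin\mathbf{R}'_7\cup R_8$.

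First, I would note that $v_i\notin\mathbf{R}_1\subseteq\mathbf{R}'_7$ together with Lemma~\ref{lemma x and y in H} places $v_i$ in $Q\cup N(\{x,y\})$, so $v_i$ has a high neighbor; since $e$ has no high vertex, every high neighbor of $v_i$ lies in its second edge $f_i$. Next, I would examine why $v_1\notin R_8$ prevents $v_1$ from being an $\ell$-far neighbor of any high neighbor $h$. Condition~(2) of the $\ell$-far definition is automatic since all three vertices of $e$ have degree at most $\ell$, and condition~(1) for $f_1$ is trivial because $h\in f_1$; so condition~(1) must fail on $e$, i.e., $u\in N(h)$ or $v_2\in N(h)$. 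Applying Claim~\ref{Claim 2 deg 2} with the vertex $h$ and the edge $e$ (noting $h\notin e$ and $v_1,v_2$ are both degree-$2$ vertices of $e$) rules out $v_2\in N(h)$. Hence $u\in N(h)$ for every high neighbor $h$ of $v_1$, and symmetrically $u\in N(h')$ and $v_1\notin N(h')$ for every high neighbor $h'$ of $v_2$; in particular, $h\ne h'$.

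To produce a contradiction from this setup, I would add the non-edge $\{h,v_1,v_2\}$ (which is a non-edge since $v_2\notin N(h)$) and analyze the cores of the resulting $C_3^{(3)}$. Every configuration except cores $\{v_2,h\}$ with leaf $v_1$ fails, because any other configuration requires a $v_1$-edge (other than the added one) avoiding both $h$ and $v_2$, whereas $v_1$'s two edges $e$ and $f_1$ respectively contain $v_2$ and $h$. The surviving case produces an edge $\{h,a,w\}\in E(G)$ with $a$ lying in the second edge $f_2=\{v_2,a,b\}$ of $v_2$, and with $w\notin\{u,v_1,v_2,b\}$ (the latter exclusions to prevent a pre-existing $C_3^{(3)}$). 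Combining this with the symmetric analysis for $v_2$ — which forces $a$ to be the unique high neighbor of $v_2$, so $\{h,a\}=\{x,y\}$ — and with $v_i\notin R_3$ forcing $d(ht),d(ab)\ge 2$, the local structure around $e$ is completely pinned down.

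The main obstacle will be the last step: closing the argument by adding one more carefully chosen non-edge (such as $\{u,v_1,a\}$ or $\{v_1,v_2,t\}$) and ruling out every possible $C_3^{(3)}$-completion of $G$ together with this non-edge. This requires juggling Claim~\ref{Claim 2 deg 2}, Claim~\ref{claim degree d d neighbor}, Claim~\ref{claim good pair double neighbor}, the triangle-freeness of $G$, and the exclusions coming from $v_1,v_2\notin\mathbf{R}'_7$ (so that $v_1,v_2$ avoid $R_3,R_4,R_5,R_6,R_7$) to eliminate all coincidences among the auxiliary vertices $u,t,b,z,z',w$ that could rescue a completion. Each such coincidence is blocked by one of the earlier structural claims, so the argument closes, but the bookkeeping is delicate.
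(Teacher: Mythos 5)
Your opening parallels the paper closely: reducing to $v_1,v_2\notin\mathbf{R}_8$, extracting high neighbors $h$ and $h'$ for $v_1,v_2$ from $v_i\notin\mathbf{R}_1$, applying the failure of the $\ell$-far condition, and invoking Claim~\ref{Claim 2 deg 2} to conclude $u\in N(h)\cap N(h')$ while $v_2\notin N(h)$, $v_1\notin N(h')$. The paper does exactly this (it additionally pins down that the edge through $h$ and $u$ must contain the third vertex of $f_1$, which you do not). You then diverge by adding the non-edge $\{h,v_1,v_2\}$ rather than the paper's $\{h_1,z_1,a_2\}$; your observation that $v_1$ cannot be a core vertex is correct.

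What follows, however, is not a proof. You never address the case in which the $h,v_2$-link passes through the low vertex of $f_2$ instead of the high one, nor the possibility $d(uv_2)=2$ (i.e.\ $u\in f_2$), which the paper treats as a separate subcase. The assertion that the ``symmetric analysis for $v_2$'' forces $\{h,a\}=\{x,y\}$ is not justified: $v_i\notin R_1$ places $v_i$ in $Q\cup N(\{x,y\})$, but that does not identify $v_i$'s particular high neighbor with $x$ or $y$, and $v_i\notin R_2$ only yields information once a bad pair with the unique non-low neighbor has been established, which you have not done. The line ``with $v_i\notin R_3$ forcing $d(ht),d(ab)\ge2$'' uses symbols $t,z,z'$ that are never defined in your argument, and does not correspond to a statement extractable from the $R_3$ definition in this configuration. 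Most significantly, your last paragraph is a promissory note: the entire content of the lemma is in ``ruling out every possible $C_3^{(3)}$-completion of $G$ together with this non-edge,'' and you explicitly defer it (``the bookkeeping is delicate,'' ``each such coincidence is blocked''). That case analysis is precisely what the paper carries out after adding $\{h_1,z_1,a_2\}$ and using Claim~\ref{claim good pair double neighbor} to see $h_1z_1$ is a bad pair; your proposal leaves it undone, so the lemma is not proved.
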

	
	\begin{proof}
		Let $e=\{a_1,a_2,b\}$, where $d(a_1)=d(a_2)=2$, and $d(b)< \ell$. Since not all vertices of $e$ are in $\mathbf{R}'_8$,
		 neither of $a_1$ and $a_2$ is in $\mathbf{R}_8$.
		Then $a_1$ and $a_2$ are not  in $\mathbf{R}'_1$ and hence are $1$-flat.
		 So, there are vertices $h_1,h_2\in H$ such that $h_1\in N(a_1)$ and $h_2\in N(a_2)$. By Claim~\ref{Claim 2 deg 2}, $h_1\not \in N(a_2)$, and  $h_2\not\in N(a_1)$. By Claim~\ref{claim degree d d neighbor}, $b$ cannot be a double neighbor  with both $a_1$ and $a_2$, so assume that $d(\{b,a_1\})=1$. 
		
		Since some vertex of $e$ is not in $\mathbf{R}'_8$,  $a_1$ cannot be a $\ell$-far neighbor of $h_1$, so $b\in N(h_1)$. Let $e_1=\{a_1,h_1,z_1\}$ be the other edge containing $a_1$ in $G$, and let $e_1'$ be an edge that contains $h_1$ and $b$. Since $a_2\not\in N(h_1)$, $a_2\not\in e_1'$, so for $e$, $e_1$, and $e_1'$ to not be a triangle in $G$, we must have that $z_1\in e_1'$. Let $e_2=\{a_2,h_2,z_2\}$ be the second edge containing $a_2$, and note that $z_2$ may be equal to $b$.
		
		Consider the edge $f=\{h_1,z_1,a_2\}$, and note that $f\not\in E(G)$ since $h_1$ and $a_2$ are not adjacent. Then $G+f$ contains a $C_3^{(3)}$, say ${T}$. Note that $h_1z_1$ is a bad pair by Claim \ref{claim good pair double neighbor} since $a_1$ is not a double neighbor  of either vertex. Thus, $a_2$ is one of the core vertices of ${T}$. This implies that either $e$ or $e_2$ is an edge of ${T}$. If $e$ is such edge, then $b$ must be in an edge with one of $h_1$ or $z_1$, but not both; this is a contradiction though as this edge, along with $e$ and $e_1$ would give us a $C_3^{(3)}$ in $G$. If $e_2$ is an edge of ${T}$, then we must have an edge $g$ in $G$ containing exactly one vertex from $\{h_1,z_1\}$ and exactly one vertex from $\{h_2,z_2\}$. Furthermore, $a_1\not\in g$ since $d(a_1)=2$ and so $g\neq e_1,e$. Consider two cases based on if $b=z_2$ or not.
		
		\textbf{Case 1:}  $b=z_2$. In this case, if $g$ contains $b$ and a vertex from $\{h_1,z_1\}$, this gives us a triangle in $G$ with edges $e,e_1$ and $g$. If $g$ contains $h_2$ and a  vertex from $\{h_1,z_1\}$, then this also gives us a triangle in $G$, this time with edges $e_1',e_2$ and $g$. Thus, we reach a contradiction.
		
		\textbf{Case 2:}  $z_2\neq b$. Note that since $a_2$ is not a $\ell$-far neighbor of $h_2$, $b\in N(h_2)$, we have some edge $e_2'$ containing both $b$ and $h_2$. Since in this case $a_2$ does not have any double neighbors, and $a_1\not\in e_2$ or $e_2'$, for the edges $e, e_2$ and $e_2'$ to not form a triangle in $G$, we must have that $z_2\in e_2'$. Now, since $|g\cap \{h_1,z_1\}|=|g\cap |\{h_2,z_2\}|=1$, the only possible way that $g, e_1'$ and $e_2'$ do not form a triangle is if $b\in g$, but in this case, $g,e_1$ and $e$ give us a triangle, and so we reach a contradiction.
	\end{proof}

	\subsection{Helpful and Half-Helpful Vertices}
	
	We now provide definitions which will help us deal with $2$-vertices.
	
	Given vertices $h\in M$ and $u\in L$, we will call the pair $hu$ \emph{rich} if $d(hu)\geq 3$, and we will call a vertex $u\in L$ \emph{rich} if $u$ is in any rich pair. We will call a $(M,L,4)$ edge $\{h,v,u\}$ with $h\in M$ and $d(u)=4$ \emph{exceptional} if
	\begin{itemize}
	    \item $d(hv)=2$ and $d(v)\geq 4$, and
	    \item $hu$ is a rich pair and $\mathrm{flat}(u)=2$.
	\end{itemize}
	We will call $v$ the \emph{exception} in the exceptional edge $\{h,v,u\}$.
	
	We now classify all $(M,L,L)$ edges of $G$ into three mutually exclusive types: needy edges, rich edges and reasonable edges.
	
	\subsubsection*{Needy Edges:} Let $e=\{h,u,v\}$ be a $(M,L,L)$ edge with $h\in M$. We will say $e$ is a \emph{needy edge with recipient $v$} if $v$ is either a $2$-vertex or a $1$-flat $3$-vertex, and $u$ is not.
	
	\subsubsection*{Rich Edges:} Let $e=\{h,u,v\}$ be a $(M,L,L)$ edge with $h\in M$ that is not needy. We will say $e$ is a \emph{rich edge with recipient $v$} if 
	\begin{enumerate}
	    \item $\{h,u,v\}$ is not exceptional,
	    \item $d(v)\leq 7$, $v$ is unsupported, and $hv$ is not rich, and
	    \item at least one of the following occurs:\begin{itemize}
	        \item $hu$ is a rich pair,
	        \item $d(u)\geq 3$ and $u$ is supported, or
	        \item $d(u)\geq 8$.
	    \end{itemize}
	\end{enumerate}
	
	\subsubsection*{Reasonable Edges:} We will call all $(M,L,L)$ edges that are not needy or rich, \emph{reasonable}.
	
	\subsubsection*{Helpful and Half-Helpful Vertices:}

	We now can turn our attention to the low vertices which will give charge to $1$-flat $2$-vertices. Given a low vertex $v$ let $\mathrm{rich}(v)$ denote the number of rich edges in which $v$ is the recipient, let $\mathrm{reas}(v)$ denote the number of reasonable edges containing $v$, and let $\mathrm{supp}(v)$ denote the number of $(M,M,v)$ edges, and finally let $\mathrm{flat}(v)$ be the number of edges containing $v$ and a $1$-flat $2$-vertex.
	
	Then, a low vertex $v$ of degree at least $3$ is \emph{helpful} if
	\begin{equation}\label{inequality helpful definition}
	    	d(v)+2\mathrm{supp}(v)+\mathrm{rich}(v)+\frac{1}{2}\mathrm{reas}(v)\geq \frac{1}2\mathrm{flat}(v)+4.
\end{equation}
	The left-hand side of~\eqref{inequality helpful definition} will be exactly the amount of charge $v$ ends up with before $v$ gives any charge away. Then, the right-hand side is the amount of charge we want $v$ to be able to give away, plus the amount of charge we want $v$ to keep (namely $4$). So, helpful vertices are exactly those low vertices that can give charge $1/2$  to each of their $1$-flat $2$-neighbors (counted with multiplicity), and still have charge at least $4$.
	
	Since (almost) every edge contains at most one $1$-flat $2$-vertex, it would be nice if we could simply show that every edge containing a $1$-flat $2$-vertex also contains a helpful vertex, however it is not clear if this is the case. Instead, we will do a second round of ``helping''. We call a low vertex $v$ a \emph{$k$-donor} if $v$ is in exactly $k$ edges with $1$-flat $2$-vertices which are either type $(M,v,2)$ or are type $(L,v,2)$, where the other low vertex is not helpful. Let $\mathrm{donor}(v)$ denote the value of $k$ for which $v$ is a $k$-donor.
	
	A low vertex $v$ of degree at least $3$ is \emph{half-helpful} if
	\begin{equation}\label{inequality halfhelpful definition}
		d(v)+2\mathrm{supp}(v)+\mathrm{rich}(v)+\frac{1}2\mathrm{reas}(v)\geq \frac{1}2\mathrm{donor}(v)+4.
	\end{equation}
	
	Again, the left-hand side of the above inequality is exactly the charge that $v$ has before it gives any away, and the right-hand side is the amount of charge we want $v$ to give away, along with the charge we want $v$ to keep. Note that since $\mathrm{donor}(v)\leq \mathrm{flat}(v)$, every helpful vertex is also half-helpful. 
	
	We now will show that almost every edge containing a $1$-flat $2$-vertex contains either a helpful or half-helpful vertex. In particular, we will show the following:
	\begin{itemize}
		\item Almost every $(V,L,2)$ edge containing a $1$-flat $2$-vertex contains a vertex of degree at least $4$, and
		\item Almost every vertex of degree at least $4$ is helpful or half-helpful.
	\end{itemize}
	As we have already shown that there are almost no $(M,2,2)$ or $(M,3,2)$ edges, to prove the first bullet point above, we need to only deal with $(L,L,2)$ edges.
	
	\begin{lemma}\label{lemma low edge containing 1flat 2vtx contains 4+vertex}
	Let $e=\{a,b,t\}$ be an $(L,L,2)$ edge with  a $1$-flat $2$-vertex $t$. If $\max\{d(a),d(b)\}\leq 3$, then
	$\{a,b,t\}\subseteq \mathbf{R}'_8$.
	\end{lemma}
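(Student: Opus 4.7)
Assume for contradiction that $\{a,b,t\}\not\subseteq\mathbf{R}'_8$. Because the three vertices are pairwise adjacent via $e$ and each has degree at most $3\le 8$, if any one of them lay in $\mathbf{R}_8$ then the other two would belong to $L\cap N(L_8\cap\mathbf{R}_8)\subseteq\mathbf{R}'_8$; hence none of $a,b,t$ is in $\mathbf{R}_8$, and therefore none lies in any of the sets $R_1,\ldots,R_8$ or in $\mathbf{R}'_i$ for $i\le 7$. In particular $d(a),d(b)\ge 2$, and if $\min\{d(a),d(b)\}=2$ then $e$ is an $(L,2,2)$ edge and Lemma~\ref{lemma no (L,2,2) edges} already places $\{a,b,t\}$ in $\mathbf{R}'_8$, a contradiction; so we may assume $d(a)=d(b)=3$.

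Extract the local structure around $t$. Since $t$ is $1$-flat, its unique $M$-neighbour $h$ lies in a single edge $f=\{h,t,z\}$, and Claim~\ref{claim degree d d neighbor} forces $z\notin\{a,b\}$ (else $d(at)=d(t)=2$ or $d(bt)=d(t)=2$ would give $d(a)\ge 4$ or $d(b)\ge 4$). Since $h$ is the only $M$-neighbour of $t$, $t\notin Q$; combining this with $t\notin R_1$ and $t\in L_8$ gives $t\in N(\{x,y\})$, and since the only possible neighbour of $t$ in $H$ is $h$ itself, we obtain $h\in\{x,y\}\cap H$. Next, $t\notin R_8$ together with $t\in N(h)$ means $t$ is not an $\ell$-far neighbour of $h$; Property~\ref{Property jfar degree} of $\ell$-farness holds on the unique edge $e$ of $t$ in $G-h$ (since $a,b$ are low), so Property~\ref{Property jfar neighborhood of v} fails and $\{a,b\}\cap N(h)\neq\emptyset$. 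Without loss of generality $a\in N(h)$ via an edge $e_1=\{a,h,w\}$. Finally, Claim~\ref{claim good pair double neighbor} applied to $e$, combined with $z\notin\{a,b\}$, shows that $ab$ is a bad pair.

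The core of the proof is to add two carefully chosen non-edges and exploit saturation. The triple $\{h,a,t\}$ is not in $E(G)$ (since $z\neq a$), and both the $\{h,t\}$-link avoiding $a$ and the $\{a,t\}$-link avoiding $h$ are blocked: the edges of $t$ are $e$, which contains $a$, and $f$, which contains $h$, so neither candidate link has a valid second edge in $G$. Saturation therefore forces a $\{h,a\}$-link avoiding $t$, which in turn forces $a$ to possess a third edge $e_2=\{a,u_1,u_2\}$ with $h,t\notin\{u_1,u_2\}$ and some $u_i\in N(h)$. A parallel analysis with the non-edge $\{h,b,t\}$ (also not in $E(G)$ since $z\neq b$) supplies analogous structure on the $b$-side. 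One then splits on the co-degree $d(ab)\in\{1,2\}$.

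In each case, $C_3^{(3)}$-freeness of $G$ forces coincidences among the free vertices: for instance, demanding that $\{e,e_1,f\}$ not form a loose triangle in $G$ requires $w\in\{b,z\}$, and when $d(ab)=1$ this forces $w=z$, so $e_1=\{a,h,z\}$, with a symmetric identification on the $b$-side. Iterating such forced coincidences, one either exhibits an explicit loose triangle among the edges $e,e_1,e_2,f_1,f_2,f$ already present in $G$ (contradicting $C_3^{(3)}$-freeness), or locates a new non-edge such as $\{h,a,b\}$, $\{a,b,z\}$, or a suitable triple of link centres whose addition to $G$ cannot be extended to any loose triangle, contradicting saturation. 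The principal obstacle is managing this terminal case analysis, which branches on whether $b\in N(h)$, on the identities of the link centres relative to $z$ and $w$, and on whether $a$ and $b$ are double neighbours; each branch requires a tailor-made witness non-edge to produce the final contradiction.
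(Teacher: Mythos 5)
Your setup diverges from the paper's at a key point, and your proof is ultimately incomplete — you acknowledge this yourself in the final sentence. Let me identify the gap precisely.

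Your initial reduction is sound and takes a slightly different route than the paper's: you use the $\ell$-farness condition (via $t \notin R_8$) to extract $\{a,b\} \cap N(h) \neq \emptyset$ and hence an edge $e_1 = \{a,h,w\}$, whereas the paper instead uses the fact that $t \notin \mathbf{R}'_2$ makes $ht$ a good pair, whose link must pass through the third vertex $z$ of the high edge $f = \{h,z,t\}$, yielding the sharper conclusion that $\{h,z,a\}$ is an edge (with $z$ already pinned down). Your conclusion is weaker because $w$ is unknown, and this forces extra casework later.

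The real gap is in the terminal analysis. You propose to add the two non-edges $\{h,a,t\}$ and $\{h,b,t\}$ and extract an $h,a$-link and an $h,b$-link avoiding $t$, then "iterate forced coincidences." This is not a proof; it is an outline of one, and the branching you describe (on $b \in N(h)$, on identities of link centres relative to $z$ and $w$, on whether $a,b$ are double neighbours) has not been shown to terminate in a contradiction in every branch. The paper avoids this explosion by choosing the \emph{second} non-edge to be $\{z,t,a\}$ rather than $\{h,b,t\}$ (in your notation — the paper calls $z$ by the name $u$). Because both non-edges sit on the same vertex $a$ and both blocked pairs reduce to links that must contain the known edge $\{h,z,a\}$, the paper extracts a rigid two-edge structure $\{h,z,s\}$ and $\{s,s',a\}$ with $\{s,s'\} \cap \{h,z,a,t\} = \emptyset$, and then the case split is only on whether $s = b$ or $s \neq b$ — a genuinely finite and tractable dichotomy. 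Your symmetric $a$/$b$ scheme lacks this rigidity, and without carrying out the branches you cannot conclude. As written, this is an incomplete proof with the essential difficulty — the one you flag as "the principal obstacle" — left unresolved.
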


	\begin{proof} Suppose $\max\{d(a),d(b)\}\leq 3$ and
	$\{a,b,t\} \not\subseteq \mathbf{R}'_8$.  Then no  vertex of $e$ is in $\mathbf{R}_8$.
		 By Lemma~\ref{lemma no (L,2,2) edges},
		  $d(a)=d(b)=3$. Since $t$ is not in $\mathbf{R}_1$, it is contained in 
	a high edge $\{h,u,t\}$, say $h\in H$. Since $t$ is not in $\mathbf{R}'_2$,  $ht$ is a good pair. To avoid a $C_3^{(3)}$ in $G$, each   $h,t$-link must contain $u$, say, $\{h,u,a\}$ is an edge of $G$. Consider the non-edges $\{h,t,a\}$ and $\{u,t,a\}$. Since these triples intersect every edge containing $t$ in two vertices, there is an $h,a$-link and a  $u,a$-link avoiding the vertex $t$. 
		
		Assume  that there exist distinct vertices $z,z'\in \{h,u\}$ such that there is a  $z,a$-link that has no edge containing both $z$ and $z'$. Thus there exist vertices $w_1$ and $w_2$ such that $\{z,w_1,w_2\}$ and $\{w_1,a,z'\}$ are edges in $G$ (note that $z'$ must be in this second edge to avoid a triangle with $\{z,z',a\}=\{h,u,a\}$). However, then $\{z,w_1,w_2\}$, $\{w_1,a,z'\}$ and $\{z,z',t\}$ form a $C_3^{(3)}$ in $G$,  a contradiction.
		
		Thus, $\{h,u,s\}$ and $\{s,s',a\}$ are edges for some $s,s'$ such  that $\{s,s'\}\cap \{h,u,a,t\}=\emptyset$. We will consider cases based on whether $b\in \{s,s'\}$ or not. 
		
		\textbf{Case 1:} $s=b$. Under this assumption, $s'\not\in\{a,b,u,t,h\}$ and $d(ab)\geq 2$. By Claim~\ref{claim degree d d neighbor}, we must have $d(ab)=2$, and furthermore, $d(at)=d(bt)=1$. Thus by Claim~\ref{claim good pair double neighbor}, $ab$ is a bad pair. Consider the non-edge $\{a,b,h\}$. There must be an $h,a$-link or an $h,b$-link in $G+\{a,b,h\}$. However every edge containing $a$ or $b$ intersects $\{a,b,h\}$ in two vertices, so no such links exist. This proves the case.
		
		\textbf{Case 2:} $s\neq b$. Note that we may have $b=s'$. Consider the non-edge $\{s,a,t\}$. The only edges containing $a$ or $t$ that do not intersect $\{s,a,t\}$ in two vertices are $\{a,u,h\}$ and $\{t,u,h\}$. Thus, $at$  cannot be the pair of core vertices in the $C_3^{(3)}$ in $G+\{s,a,t\}$, so $s$ must, and there has to be an edge in $G$ of the form $\{s,x,y\}$ for some $x\in\{u,h\}$, $y\not\in\{u,h,a,t\}$. If $y\neq s'$, then $\{s,x,y\}$, $\{a,u,h\}$ and $\{s,s',a\}$ form a $C_3^{(3)}$ in $G$, so we must have $y=s'$. Furthermore, if $b=s'$, then $\{s,s',x\}$, $\{h,u,t\}$ and $\{t,a,b\}$ form a $C_3^{(3)}$ in $G$, so $b\neq s'$. Furthermore, note that there is no edge in $G$ containing exactly one vertex from $\{s,s'\}$ and exactly one from $\{h,u\}$ since any such edge could not contain $a$, and thus would create a $C_3^{(3)}$ in $G$ with $\{s,s',a\}$ and $\{a,h,u\}$. Similarly, there is no edge containing $b$ and exactly one vertex from $\{s,s'\}$.
		
		Since $a$ has no double neighbors,  by Claim~\ref{claim good pair double neighbor}, $ss'$ is a bad pair. Consider the non-edge $\{s,s',t\}$. There must be a  $t,q$-link for some $q\in \{s,s'\}$. This link cannot use  edge $\{t,a,b\}$ since no edge involving $a$ can complete the path, and there is no edge in $G$ containing $b$ and exactly one vertex from $\{s,s'\}$. Similarly, the link cannot use the edge $\{t,h,u\}$ since there is no edge containing exactly one vertex from $\{h,u\}$ and one from $\{s,s'\}$. This contradiction completes the proof.
	\end{proof}
	
	Now we focus on showing that almost all low vertices of degree at least $4$ are half-helpful. Vertices of degree $6$ or more and supported vertices are relatively easy to deal with, but first we need a helpful lemma.
	
	\begin{lemma}\label{lemma 1flat 2 vertex implies rich}
		Let $\{h,u,t\}$ be a $(H,L,2)$ edge where $h\in H$ and $t$ is a $1$-flat $2$-vertex. If neither $u$ nor $t$ is  supported
		or belongs to $\mathbf{R}'_8$, then $d(hu)\geq 3$.
	\end{lemma}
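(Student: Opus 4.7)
The plan is to prove the lemma by contradiction: assume $d(hu)\leq 2$ and either exhibit a $C_3^{(3)}$ already in $G$ or produce a non-edge whose addition cannot create a $C_3^{(3)}$, contradicting saturation.

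The case $d(hu)=1$ is immediate from the garbage set $R_3$. Under this assumption, $\{h,u,t\}$ is an $(M,L,L)$ edge with $h\in M$ and $d(hu)=1$, while $t$ is a $1$-flat $2$-vertex with $d(t)=2\leq 8$ and $t\notin \mathbf{R}'_2$. The definition of $R_3$ then places $t\in R_3\subseteq \mathbf{R}'_8$, contradicting $t\notin \mathbf{R}'_8$.

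For $d(hu)=2$, let $\{h,u,w\}$ be the second $hu$-edge; since $u$ is unsupported, $w\in L$. Also $d(u)\geq 3$, for otherwise $d(hu)=d(u)=2$ puts $u\in R_4\subseteq \mathbf{R}'_8$. Since $t$ is $1$-flat, its second edge $\{a,b,t\}$ has $a,b\in L$. Using $t\notin R_8$, I would argue that $t$ is not an $\ell$-far neighbor of $h$: Property (ii) of the $\ell$-far definition is automatic because every vertex in $N(t)\setminus\{h\}$ has degree below $\ell$, so Property (i) must fail, forcing $\{a,b\}\cap N(h)\neq\emptyset$. Say $a\in N(h)$ with $\{h,a,c\}\in E(G)$. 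A case split on $c$ disposes of most possibilities: if $c\notin\{u,w,b,t\}$, then $\{h,u,t\}$, $\{a,b,t\}$, $\{h,a,c\}$ pairwise meet in the three distinct singletons $\{t\},\{a\},\{h\}$, forming a $C_3^{(3)}$ in $G$; $c=t$ is impossible because $d(ht)=1$; $c=w$ yields the same forbidden $C_3^{(3)}$ with $\{h,a,w\}$; and $c=u$ would give a third $hu$-edge $\{h,u,a\}$ unless $a=w$. This leaves two residual configurations: (i) $a=w$, i.e.\ the triangle-like structure $\{h,u,t\},\{h,u,w\},\{w,b,t\}$, and (ii) $\{h,a,b\}\in E(G)$.

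In each residual configuration I would finish by introducing a carefully chosen non-edge. For (ii), the natural candidate is $\{h,t,b\}$: the $ht$-link and $tb$-link routes are blocked because the only edge through $t$ not containing $h$ is $\{a,b,t\}$ which itself contains $b$, while a $hb$-link avoiding $t$ would require a further edge $\{b,x,y\}$ with $t\notin\{x,y\}$, which together with the enumerated edges yields a $C_3^{(3)}$ already in $G$ (via $\{h,a,b\}$ or $\{h,u,w\}$), contradicting $G$ being $C_3^{(3)}$-free. For (i), after adding, say, $\{u,t,w\}$, the $ut$-link and $tw$-link options are immediately killed by the edges of $t$, and the $uw$-link route is constrained by $u$'s edges $\{h,u,t\},\{h,u,w\}$ (and the fact that $t\notin R_3$ and $u\notin R_4$) to force a $C_3^{(3)}$ in $G$. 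The subcase $u\in\{a,b\}$ must be handled separately because it forces $u$ to be a double neighbor of $t$; Claim \ref{claim degree d d neighbor} then gives $d(u)\geq 4$ and restricts $u$'s degree-$2$ double neighborhood, after which adding $\{h,t,b\}$ and leveraging the $R_4$ constraint and the unsupportedness of $u,t$ blocks every potential link.

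The main obstacle is that no single non-edge closes the $d(hu)=2$ case uniformly: each added non-edge leaves a handful of candidate $C_3^{(3)}$-completions that must be excluded by combining $d(ht)=1$, the unsupportedness of $u$ and $t$, the flatness/degree constraints inherited from $u,t\notin \mathbf{R}'_8$ (via $R_3,R_4,R_5,R_8$), and repeated applications of Claim \ref{claim good pair double neighbor}. The bookkeeping gets heaviest when $b\in N(h)$ as well, since the case analysis on $c$ can then be rerun with $a,b$ swapped, and when the residual structure forces additional edges on $b$ or $w$; these propagations are where the proof becomes technically heavy but remain controlled by the same local triangle-forcing arguments.
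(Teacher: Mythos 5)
Your opening moves are correct and take a different route than the paper: you deduce $d(hu)\geq 2$ directly from $t\notin R_3$ (the paper derives the same fact by following an $h,t$-link, using $t\notin\mathbf{R}'_2$ to obtain a good pair), and then you use $t\notin R_8$ and the $\ell$-far mechanism to force $\{a,b\}\cap N(h)\neq\emptyset$. The $R_8$ derivation is sound: since $a,b\in L$, Property (2) of $\ell$-far is automatic, so Property (1) must fail.

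The difficulty is in the residual cases, and the sketch does not close them. Your case (ii) (where $\{h,a,b\}\in E(G)$) illustrates the problem. After adding $\{h,t,b\}$ you correctly exclude $ht$ and $tb$ as core pairs, so an $h,b$-link $\{h,p,q\},\{q,r,b\}$ avoiding $t$ must exist, and you claim this forces a $C_3^{(3)}$ in $G$ "via $\{h,a,b\}$ or $\{h,u,w\}$". But the triple $\{h,p,q\},\{q,r,b\},\{h,a,b\}$ is a loose triangle only if $p,q,r$ all avoid $a$. If $r=a$ (i.e., $\{q,a,b\}\in E(G)$), then $\{q,r,b\}\cap\{h,a,b\}=\{a,b\}$ has two vertices and the cycle does not close; the alternatives $\{h,u,t\}$ and $\{h,u,w\}$ fail similarly because none of $q,a,b$ can equal $h,u$, or $t$. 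So there is a genuine live subcase which your argument does not resolve. Moreover, this case is not vacuous: together with the (implicitly available) $ht$-good-pair structure one can find a configuration with $b=w$ and $\{h,a,b\},\{q,a,b\}\in E(G)$ that is not immediately contradictory.

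The missing ingredient is precisely what the paper exploits. From $t\notin\mathbf{R}'_2$ one gets that $ht$ is a good pair, and because the low edge $\{a,b,t\}$ is the only edge of $t$ not through $h$, the second edge of any $h,t$-link must contain $u$ and must therefore be an $\{h,u,\cdot\}$-edge. This immediately pins the link down as $\{h,u,w\},\{w,z,t\}$ (forcing $w\in\{a,b\}$, so your two residual cases collapse into one), and then the one further non-edge $\{h,w,t\}$ — using that $hw$ is a good pair whose link must also pass through $u$ — yields either the third $hu$-edge or a $C_3^{(3)}$ in $G$. You have the hypothesis to invoke this (since $\mathbf{R}'_2\subseteq\mathbf{R}'_8$), but your proposal never uses the good pair $ht$, which is why the bookkeeping balloons and leaves holes. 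I would recommend folding that observation in; with it, the residual case analysis simplifies substantially and the gap in case (ii) disappears.
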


	\begin{proof}
		Since $t$ is $1$-flat and not in $\mathbf{R}'_2$, $ht$ is a good pair. We will consider two cases based on if $d(ut)=1$ or not.
		
		\textbf{Case 1:} $d(ut)=1$. For $ht$ to be a good pair and to avoid a triangle with $\{h,u,t\}$, there must be edges $\{h,u,w\}$ and $\{w,z,t\}$ for some $w,z\not\in \{h,u,t\}$. 
		
		Consider the non-edge $\{h,w,t\}$. Since $\{h,w,t\}$ intersects every edge containing $t$ in two vertices, $hw$ must be a good pair, and furthermore, this $h,w$-link must avoid $t$. Furthermore, this link must use $u$ to avoid a $C_3^{(3)}$ in $G$ with $\{h,u,w\}$. If one of the edges of the link is $\{h,u,v\}$ for some $v\not\in \{w,t\}$, then we are done since $d(hu)\geq 3$, so let us assume otherwise. Then the loose path must use edges of the form $\{w,u,v\}$ and $\{v,v_1,h\}$ for some $v,v_1\not\in \{h,w,u,t\}$, but then $\{h,u,t\}$, $\{u,w,v\}$ and $\{v,v_1,h\}$ form a $C_3^{(3)}$ in $G$, a contradiction.
		
		\textbf{Case 2:} $d(ut)\geq 2$, say $\{u,v,t\}$ is an edge of $G$ with $v\neq h$. 
		Since $t$ is  not in $\mathbf{R}'_3$,  the codegree of $hu$ must be at least $2$, say $\{h,u,z\}$ is an edge. 
		
		First, if $z=v$, then the non-edge $\{h,t,v\}$ intersects every edge of $t$ in at least two vertices, so $hv$ is a good pair. Any $h,v$-link must contain $u$ to avoid a $C_3^{(3)}$ in $G$ with $\{h,u,v\}$, but if one of the edges in this link contains both $h$ and $u$, then $d(hu)\geq 3$, satisfying the hypothesis of the lemma, so we may assume that one of the edges in this path is $\{u,v,a\}$ and another is $\{h,a,b\}$ for some $a,b\not\in \{h,u,v,t\}$. However, this creates a $C_3^{(3)}$ in $G$ with edges $\{h,u,t\}$, $\{u,v,a\}$ and $\{h,a,b\}$. Thus, we may assume $z\neq v$.
		
		Now consider the non-edge $\{u,t,z\}$. Since it intersects both edges containing $t$ in two vertices, $uz$ must be a good pair, and there must be a  $u,z$-link that avoids $t$. Furthermore, this link must contain $h$ to avoid a $C_3^{(3)}$ in $G$ with the edge $\{h,u,z\}$. If this link contains an edge containing both $h$ and $u$, this would satisfy the claim of the lemma, so the link consists of edges of the form $\{h,z,a\}$ and $\{u,a,b\}$ for some $a,b\not\in \{h,u,t\}$. But this gives a $C_3^{(3)}$ in $G$ with edges $\{h,u,t\}$, $\{u,a,b\}$ and $\{h,z,a\}$, a contradiction.
	\end{proof}
	
	\begin{lemma}\label{claim supported or high degree implies helpful}
		Let $v\in L\setminus \mathbf{R}'_8$ and $d(v)\geq 3$. If $v$ is supported or $d(v)\geq 6$, then $v$ is helpful.
	\end{lemma}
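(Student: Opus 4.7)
The inequality to verify is
\[ d(v) + 2\mathrm{supp}(v) + \mathrm{rich}(v) + \tfrac{1}{2}\mathrm{reas}(v) \geq \tfrac{1}{2}\mathrm{flat}(v) + 4. \]
My plan hinges on one very simple counting fact: a $1$-flat $2$-vertex has exactly one $M$-neighbor, so it cannot sit in a $(M,M,L)$-edge. Consequently every $(M,M,v)$-edge at $v$ is disjoint from the edges counted by $\mathrm{flat}(v)$, and I get the clean bound $\mathrm{flat}(v) \leq d(v) - \mathrm{supp}(v)$.

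For the supported case I would plug the counting fact in and discard the non-negative terms $\mathrm{rich}(v)+\tfrac{1}{2}\mathrm{reas}(v)$. This yields
\[
\text{LHS} - \text{RHS} \geq d(v) + 2\mathrm{supp}(v) - \tfrac{1}{2}(d(v) - \mathrm{supp}(v)) - 4
= \tfrac{1}{2}(d(v) - 8) + \tfrac{5}{2}\mathrm{supp}(v),
\]
and the hypotheses $d(v) \geq 3$ and $\mathrm{supp}(v) \geq 1$ give the right-hand side $\geq -\tfrac{5}{2} + \tfrac{5}{2} = 0$.

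For the unsupported case with $d(v) \geq 6$, I have $\mathrm{supp}(v) = 0$, and each of the $d(v)$ edges at $v$ is of type $(L,L,v)$ or $(M,L,v)$. My plan is an edge-by-edge tally showing every edge contributes at least $\tfrac{1}{2}$ to $\text{LHS} - \tfrac{1}{2}\mathrm{flat}(v)$: an $(L,L,v)$-edge contributes $1$ to $d(v)$ minus at most $\tfrac{1}{2}$ to $\mathrm{flat}(v)/2$; an $(M,L,v)$-edge that is needy with a $1$-flat $2$-vertex recipient contributes $1 - \tfrac{1}{2} = \tfrac{1}{2}$; and every other $(M,L,v)$-edge (needy with non-$1$-flat-$2$-vertex recipient, rich, or reasonable) contributes at least $1$. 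Summing over edges, $d(v) \geq 8$ already forces $\text{LHS} - \tfrac{1}{2}\mathrm{flat}(v) \geq 4$.

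The remaining delicate subcase is $d(v) \in \{6,7\}$. Here the plan is to show that at least two edges at $v$ contribute strictly more than $\tfrac{1}{2}$, i.e.\ do not contain a $1$-flat $2$-vertex. The key tool is Lemma~\ref{lemma 1flat 2 vertex implies rich}: any $(H,L,2)$-edge $\{h,v,t\}$ with $t$ a $1$-flat $2$-vertex (and our standing assumption $v \notin \mathbf{R}'_8$, $v$ unsupported) forces $d(hv) \geq 3$, i.e.\ $hv$ is a rich pair. Combining with Claim~\ref{claim degree d d neighbor} and $v \notin \mathbf{R}'_4$ gives $d(hv) < d(v)$, so there are at least two extra edges $\{h,v,w\}$ with $w \neq t$; each such extra edge contributes at least $1$, closing the gap. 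I expect the main obstacle to be ruling out the pathological scenario in which the only $1$-flat $2$-vertex neighbors of $v$ are in edges with $M$-neighbors that lie in $M \setminus H$ (so Lemma~\ref{lemma 1flat 2 vertex implies rich} does not apply directly). To handle this, I would use that such $M \setminus H$ vertices have bounded degree (less than $n/\ell^2$) together with the $\mathbf{R}'_2$ and $\mathbf{R}'_3$ garbage-set structure: $v \notin \mathbf{R}'_8 \supseteq \mathbf{R}'_3$ and the fact that $t \notin \mathbf{R}_2$ force $ht$ to be a good pair, and this gives back a rich-pair conclusion analogous to Lemma~\ref{lemma 1flat 2 vertex implies rich} or directly contradicts the fact that only a single flat $(M,L,v)$-edge can sit on such an $h$.
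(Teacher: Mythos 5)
Your treatment of the supported case and the reduction to the unsupported $d(v)\in\{6,7\}$ case is fine and closely tracks the paper (which first observes $\mathrm{rich}(v)+\mathrm{reas}(v)\ge 1$ suffices for $d(v)\ge 6$, then reduces to $\mathrm{rich}(v)=\mathrm{reas}(v)=\mathrm{supp}(v)=0$, for which $d(v)\ge 8$ is trivial). However, the $d(v)\in\{6,7\}$ argument has a genuine gap at the decisive step.

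After invoking Lemma~\ref{lemma 1flat 2 vertex implies rich} to get $d(hv)\ge 3$, you assert that the ``extra'' edges $\{h,v,w\}$ with $w\ne t$ each contribute at least $1$. There is nothing preventing $w$ from also being a $1$-flat $2$-vertex, in which case $\{h,v,w\}$ is again a needy edge with recipient $w$ and contributes only $1/2$. This is not a hypothetical corner case: the paper's own proof explicitly arrives at the configuration where $e_2=\{h,v,w_2\}$ and $e_3=\{h,v,w_3\}$ have $w_2,w_3$ both $1$-flat $2$-vertices, and this is precisely what makes $d(v)=6$ hard. The tool that controls how many $1$-flat $2$-vertices can appear among the third vertices of a rich pair is Lemma~\ref{coneighbors}, which you never invoke. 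Moreover, even Lemma~\ref{coneighbors} alone only gets $\mathrm{flat}(v)\le d(v)-1$ in its alternative (a), which is still insufficient at $d(v)=6$; the paper then has to show $hv$ is a bad pair (via the fact that any $h,v$-link would need to pass through $w_2$ or $w_3$, which collides with Claim~\ref{Claim 2 deg 2}), use $v\notin\mathbf{R}'_2$ to produce a second non-low neighbor $h'$, and apply Lemma~\ref{coneighbors} again to $h'$ to reach a contradiction. The paper also runs a separate Case 2 for $d(hv)=1$ with its own non-trivial chain through $\mathbf{R}_3,\mathbf{R}_4$, bad pairs, and Lemma~\ref{coneighbors}. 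Your final paragraph anticipates a different obstacle (that $h$ might lie in $M\setminus H$); that one is comparatively minor, since a $1$-flat $2$-vertex $t\notin\mathbf{R}_1$ has its unique non-low neighbor in $N(\{x,y\})\cap M$, and the paper's derivation places it in $H$. The real missing ingredient is the possibility that the extra codegree-$3$ edges are themselves loaded with $1$-flat $2$-vertices, and the machinery needed to rule that out.
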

	
	\begin{proof} If $v$ is supported, then $\mathrm{supp}(v)\geq 1$ and $\mathrm{flat}(v)\leq d(v)-1$. So, since $d(v)\geq 3$,
\[
d(v)+2\mathrm{supp}(v)- \frac{1}2\mathrm{flat}(v)-4\geq \frac{1}2(d(v)+1)-2\geq 0,
\]
	which yields~\eqref{inequality helpful definition}.
	
	Suppose now that some vertex $v\in L\setminus \mathbf{R}'_8$ with $d(v)\geq 6$ is not helpful. By above, $\mathrm{supp}(v)=0.$ Similarly, if $v$ is contained in a reasonable edge or is a recipient in a rich edge, then this edge does not contain a $1$-flat $3^-$-vertex, and hence
$$	d(v)+\mathrm{rich}(v)+\frac{1}2\mathrm{reas}(v)- \frac{1}2\mathrm{flat}(v)-4\geq d(v)+\frac{1}2-\frac{1}2(d(v)-1)-4=\frac{1}2d(v)-3\geq 0,$$		which again yields~\eqref{inequality helpful definition}. Thus
\begin{equation}\label{r-r}
    \mathrm{rich}(v)+\mathrm{reas}(v)=0.
\end{equation}
Since $v\notin  \mathbf{R}'_1$,  there are a vertex $h\in H$ and a vertex $w\in L$	such that
$\{h,v,w\}$ is an edge in $G$. 
%By Lemma~\ref{lemma rich vertices are helpful}, $\{h,v\}$ is not rich.
%So, by~\eqref{r-r},  $w$ is a $2$-vertex or a $1$-flat $3$-vertex.

{\bf Case 1:} $d(hv)\geq 2$. By Lemma~\ref{coneighbors},
either $\{h,v\}$ is contained in an edge $e_1=\{h,v,w_1\}$ where $w_1$ is neither a $2$-vertex nor a $1$-flat $3$-vertex, or $\mathrm{flat}(v)\leq d(v)-2$, in which case
$$	d(v)- \frac{1}2\mathrm{flat}(v)-4\geq \frac{1}2 d(v)+1-4\geq 0,$$
and hence~\eqref{inequality helpful definition} holds. Thus, assume the former. Since $d(v)>3$,  $e_1$  is either reasonable or rich.
By~\eqref{r-r}, we conclude that $vh$ is a rich pair. Since $d(w_1)\geq 3$,   $\mathrm{flat}(v)\leq d(v)-1$. Thus, if $v$ is not helpful, then
\begin{equation}\label{d6all1}
    \mbox{\em $d(v)=6$ and each edge containing $v$ apart from $e_1$ contains a $1$-flat $2$-vertex.}
\end{equation}
 In particular, since $vh$ is a rich pair, we have edges
$e_2=\{h,v,w_2\}$ and $e_3=\{h,v,w_3\}$ where $w_2$ and $w_3$ are $1$-flat $2$-vertices.

Then the pair $vh$ cannot be good: the first edge of any $h,v$-link must contain at least one of $w_2$ and $w_3$.
Since $v\notin  \mathbf{R}'_2$,
$v$ is adjacent to another vertex in $M$, say $G$ has an edge $e_4=\{h',v,w'\}$ where
$h'\in M$. By~\eqref{d6all1}, $w'$ must be a $1$-flat $2$-vertex, thus $h'\in H$. Then by Lemma~\ref{lemma 1flat 2 vertex implies rich},
$d(uw')\geq 3$, so by Lemma~\ref{coneighbors} some edge containing $\{h',u\}$ does not contain $1$-flat $2$-vertices. This contradicts~\eqref{d6all1}.

{\bf Case 2:} $d(hv)=1$. Since $v\notin  \mathbf{R}'_3$,  $w\notin  \mathbf{R}_3$.
Then  $w$ is not $1$-flat. Thus, $w$ is a $2$-flat $2$-vertex.
Since $w\notin  \mathbf{R}_4$, the co-degree of $hw$ is 1. If
$d(vw)=2$, say $e_2=\{v,w,h_1\}\in E(G)$, then $\mathrm{flat}(v)\leq d(v)-2$
 and we are done. So $d(vw)=1$, and hence $hv$ is a bad pair. 
 
 Since $v\notin  \mathbf{R}'_2$,
  $v$ is adjacent to a vertex $h'\in M$ distinct from $h$, say $e_2=\{v,h',w'\}\in E(G)$. If $w'$ is not a $1$-flat $2$-vertex, then
 $\mathrm{flat}(v)\leq d(v)-2$
 and we are done again. Suppose $w'$ is  a $1$-flat $2$-vertex. Since $v\notin  \mathbf{R}'_2$, $d(h'v)\geq 2$.
 Then by Lemma~\ref{coneighbors}, some edge containing $\{h',v\}$ does not contain $1$-flat $2$-vertices, and we again get
 $\mathrm{flat}(v)\leq d(v)-2$.
\end{proof}

Let $R_9$ be the set of $1$-flat $2$-vertices $u\in L\setminus \mathbf{R}'_8$ that
are double neighbors with an unsupported  $5$-vertex not in $\mathbf{R}'_8$. Let $\mathbf{R}_9=R_9\cup \mathbf{R}_8' $, and  let 
	$\mathbf{R}'_9= \mathbf{R}_9\cup (L\cap N(L_8\cap \mathbf{R}_9)) $.

\begin{lemma}\label{lemma 1flat 2vtx double neighbor with 5vertex}  
 $|R_9|\leq 60\ell^3$. % non-special $1$-flat $2$-vertices are double neighbors with an unsupported non-special $5$-vertex.
\end{lemma}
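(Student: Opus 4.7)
The plan is to associate to each $u \in R_9$ a \emph{witness} vertex $v$—an unsupported $5$-vertex with $v \notin \mathbf{R}'_8$ and $d(uv) \geq 2$—and then bound the number of witnesses as well as the number of $u$'s per witness. Since $d(u) = 2$, both edges of $u$ contain $v$. Since $u$ is $1$-flat and $u \notin \mathbf{R}_1$, one edge of $u$ is $e_1 = \{u, v, h\}$ with $h \in \{x, y\}$, and the other is $e_2 = \{u, v, w\}$ with $w \in L$.

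The first step is to show that each witness $v$ accounts for at most two elements of $R_9$. If two distinct $u, u' \in R_9$ share a witness $v$, all four of their edges contain $v$; moreover they cannot share an edge, since any shared edge would be $\{u, u', v\}$, an $(L,2,2)$-edge, forcing $u, u' \in \mathbf{R}'_8$ by Lemma~\ref{lemma no (L,2,2) edges}. Hence these four edges are distinct and fill four of $v$'s five slots, leaving at most two $u$'s per witness. The second step is to apply Lemma~\ref{lemma 1flat 2 vertex implies rich} to the $(H, L, 2)$-edge $e_1$: with $v$ unsupported, $u$ a $1$-flat $2$-vertex, and $u, v \notin \mathbf{R}'_8$, we conclude $d(hv) \geq 3$. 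So $v$ has three edges through $h$: $e_1$, $\{h, v, z_1\}$, and $\{h, v, z_2\}$, with $z_1, z_2 \in L$ (since $v$ is unsupported); the remaining edges of $v$ are $e_2 = \{u, v, w\}$ and a fifth edge $f = \{v, a, b\}$ with $a, b \in L$.

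The main step is to bound, for each $h \in \{x, y\}$, the number of possible witnesses $v$ by $O(\ell^3)$. The plan is to analyze the $C_3^{(3)}$ forced in $G + \{u, h, z_i\}$ for $i = 1, 2$ (this is a non-edge because $u$'s only edges are $e_1$ and $e_2$). Through a case analysis on which pair of vertices in $\{u, h, z_i\}$ are core vertices of the forced triangle, and on the identity of the third core vertex, one uses the fully determined neighborhood of $u$ and the facts that $v$ is unsupported and $v \notin \mathbf{R}'_4$ to eliminate most sub-cases and localize each $z_i$ within a neighborhood of $\{w, a, b\}$, all of which are low. Because $z_1$ lies in a set of size $O(\ell^2)$ (controlled by the low-degree vertices $w, a, b$), and because $v$ is one of the at most $O(\ell)$ neighbors of $z_1$, we obtain $O(\ell^3)$ witnesses per $h$; combining with the factor $2$ from step one and the two choices of $h$ gives $|R_9| \leq 60\ell^3$.

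The hardest part will be executing this case analysis cleanly. The arguments follow the pattern of those in Lemmas~\ref{lemma few h33 edges with 1flat and 2flat} and~\ref{lemma 3flat 3vertices are adjacent to only 1 1flat 3vertex}: in each sub-case, the added edge $\{u, h, z_i\}$ together with specific edges already present at $v$ either yields a $C_3^{(3)}$ already in $G$ (contradiction, eliminating that sub-case) or pins down $z_i$ within a small neighborhood of $\{w, a, b\}$. Because the known edges through $u$ and $v$ are heavily constrained (only two at $u$ and only five at $v$, three of which contain $h$), each sub-case should terminate quickly, though enumerating all combinations is delicate.
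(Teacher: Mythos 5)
Your initial setup is sound: both edges of the $2$-vertex $u$ pass through the witness $5$-vertex $v$, the factor-of-two bound per witness via Lemma~\ref{lemma no (L,2,2) edges} is correct, and invoking Lemma~\ref{lemma 1flat 2 vertex implies rich} to conclude $d(hv)\geq 3$ is exactly the right first move (the paper does the same, though it counts edges $\{h,u,t\}\in\mathcal{F}(h)$ directly rather than witnesses). However, the main step has a genuine logical gap.

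The counting argument you describe is circular. You want to bound the number of witnesses $v$ for a fixed $h\in\{x,y\}$. Your plan is to show that $z_1$ lies in a set of size $O(\ell^2)$ ``controlled by $w,a,b$'' and that $v$ is one of $O(\ell)$ neighbors of $z_1$. But $w$, $a$, $b$ are themselves defined in terms of $v$ (they are vertices in the low edges of $v$), so the ``small set'' containing $z_1$ varies with the very object you are trying to count. Knowing that $z_1\in N[N[\{w,a,b\}]]$ and $v\in N(z_1)$ is a constraint \emph{within} a single configuration, not across configurations, and therefore does not give a bound of $O(\ell^3)$ on the number of configurations. (Moreover, $z_1$ is a neighbor of $v$ and $w,a,b\subseteq N(v)$, so ``$z_1$ close to $\{w,a,b\}$'' is nearly automatic and content-free.) What the paper actually does is a pigeonhole/far-apart argument: assuming $\geq 10\ell^3$ configurations (after casing on whether $z\in\{v_1,v_2\}$, $z\in N(h)$, etc.), it fixes one configuration and locates another whose vertices lie outside the $O(\ell^2)$-size second neighborhood of the low vertices of the first; adding an appropriate non-edge between the two then cannot create a $C_3^{(3)}$, which is the contradiction. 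That ``two far-apart configurations'' step is indispensable and is absent from your plan.

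Two smaller issues: you tacitly assume $d(hv)=3$ so that a fifth, $h$-free edge $f=\{v,a,b\}$ exists, but $d(hv)=4$ is also possible (only $d(hv)=5$ is excluded via $\mathbf{R}_4$), and the case $d(hv)=4$ changes the structure you are relying on. And the case analysis that is supposed to drive everything is left entirely unexecuted; given that the bound must come from the far-apart pigeonhole rather than from localizing $z_i$, you would in any case need to redesign that analysis to produce, in each case, a configuration-versus-distant-configuration contradiction, following the pattern of the paper's Cases 1.2, 3.2, and 3.3.
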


\begin{proof} Each  $1$-flat $2$-vertex in $ L\setminus \mathbf{R}'_1$ is adjacent to $x$ or $y$.
For $h\in \{x,y\}$, let $\mathcal{F}(h)$ be the set of
%Suppose that a high vertex $h$ belongs to at least $100\ell^3$ 
edges $\{h,u,t\}$ such that $u \in L\setminus \mathbf{R}'_8$ is an unsupported $5$-vertex, $t\in L\setminus \mathbf{R}'_8$ is a  $1$-flat $2$-vertex, and $d(ut)=2$. By Lemma~\ref{lemma 1flat 2 vertex implies rich}, $d(hu)\geq 3$ for each such $e\in \mathcal{F}(h)$. Choose one such edge $e_1=\{h,u,t\}$. Let $e_2=\{z,u,t\}$ be the other edge containing $t$, and $e_3=\{h,u,v_1\}$ and $e_4=\{h,u,v_2\}$ be other edges containing $\{h,u\}$. We  consider cases based on whether $z\in \{v_1,v_2\}$ or not.

\medskip
{\bf Case 1:} For at least $10\ell^3$ edges $e_1=\{h,u,t\}\in \mathcal{F}(h)$, $z\in \{v_1,v_2\}$, say $z=v_1$. Let $G_1$ be obtained from $G$ by adding the non-edge $e_5=\{h,t,v_1\}$. By definition, $G_1$ has a triangle $T$ formed by $e_5$ and, say $g_1$ and $g_2$. Since both edges in $G$ containing $t$ have two common vertices with $e_5$, the core vertices of $T$ in $e_5$ are $h$ and $v_1$, say $h\in g_1$. 

In order for $e_3,g_1$ and $g_2$ not to form a triangle in $G$, we need $u\in g_1\cup g_2$. If $u\notin g_2$, then $u\in g_1\setminus g_2$, and the edges $g_1,g_2$ and $e_2$ form a triangle in $G$, a contradiction. Thus we may assume $g_2=\{h,u,a\}$. 
Similarly, if $u\notin g_1$, then $u\in g_2\setminus g_1$, and the edges $g_1,g_2$ and $e_1$ form a triangle in $G$, a contradiction again. So we may assume $g_1=e_4$, and in particular,  $v_2\neq a$. 
Thus the edges containing $u$ are $e_1,e_2,e_3,e_4,g_2$.

\medskip
{\bf Case 1.1:} For some  $e_1=\{h,u,t\}\in \mathcal{F}(h)$,  $uh$ is a good pair, say edges $f_1$ and $f_2$ form a $u,h$-link. In order to avoid triangles, $f_1\cup f_2$
must contain $t,v_1$ and $v_2$.
Since  we know all edges containing $t$ or $u$, the
only possibility for this is that $e_6=\{h,v_1,v_2\}\in E(G)$. But then, since $a\neq v_2$, 
the edges $e_6,g_2$ and $e_1$ form a triangle in $G$.

\medskip
{\bf Case 1.2:} $uh$ is a bad pair for at least $10\ell^3$ edges $e_1=\{h,u,t\}\in \mathcal{F}(h)$ with $v_1=z$. We can try to add the edge
$\{u,h,u'\}$ for each two such edges $e_1$ and $e'_1=\{h,u',t'\}$. By the case, the core vertices in each obtained triangle should be always $u$ and $u'$.
Since $u$ is unsupported, all vertices $u,t,v_1,v_2$ are low. So the common neighbor of $u$ with most of $u'$ should be $a$, which then must participate in at least $3\ell^3$ configurations for corresponding $\{h,u,t\}$. 

\medskip
{\bf Case 1.2.1:} This $a$ is adjacent to $h$, say $e_7=\{h,a,v_4\}\in E(G)$. Since each $v_1$ is low, there is a configuration in which $v_4\neq v_1$. For this configuration, $G$ will have a triangle with edges $e_7,g_2$ and $e_1$, a contradiction.

\medskip
{\bf Case 1.2.2:} This $a$ is not adjacent to $h$. Let $G_2$ be obtained from $G$ by adding new edge $e_8=\{h,t,v_2\}$. By definition, $G_1$ has a triangle $T$ formed by $e_8$ and, say $f_1$ and $f_2$. 

 If the core vertices of $T$ were $t$ and $h$, then the edge of $T$ containing $t$, say $f_1$ can be only $e_2$ and hence
 $f_1\cap f_2\subseteq \{u,v_1\}$. Since we know all edges containing $u$ and each of $e_1$ and $g_1$ shares two vertices with
 $e_8$, if $f_1\cap f_2= \{u\}$, then the only candidate for $f_2$ is $e_3$, but it shares two vertices with $e_2$, a contradiction.
 Thus in this case $f_1\cap f_2= \{v_1\}$, say $f_2=\{v_1,h,v_5\}$. Since $a$ is not adjacent to $h$, $v_5\neq a$. Hence $e_1,g_2$ and $f_2$ form 
a triangle, a contradiction.

If the core vertices of $T$ were $t$ and $v_2$, then again the edge  $f_1$  containing $t$ can be only $e_2$ and hence
 $f_1\cap f_2\subseteq \{u,v_1\}$. Again, if $f_1\cap f_2= \{u\}$, then the only candidate for $f_2$ is $e_3$, but it shares two vertices with $e_2$, a contradiction. So, again $f_1\cap f_2= \{v_1\}$, say $f_2=\{v_1,v_2,v_5\}$. Since $h\in e_8$ and
 $u\in e_2$, $v_5\notin \{h,u\}$. Then $f_2,e_2$ and $e_4$ form a triangle in $G$.
 
 The last possibility is that the core vertices of $T$ are $h$ and $v_2$. We may assume that $v_2\in f_2$.
 Because of $e_4$, $u\in f_1\cup f_2$. We know all edges containing $u$ and among these edges only $e_4$ contains
 $\{u,v_2\}$. Hence $u\in f_1-f_2$. In this case, the only candidate for $f_1$ is $e_3$, and so $f_1\cap f_2=\{v_1\}$.
Thus as in the previous paragraph we may assume  $f_2=\{v_1,v_2,v_5\}$. Since $h\in e_8$ , $v_5\neq h$. 
And we know all edges containing $u$, so $v_5\neq u$. Again, $f_2,e_2$ and $e_4$ form a triangle in $G$.
This finishes Case 1.

\medskip
{\bf Case 2:}  For at least $10\ell^3$ edges $e_1=\{h,u,t\}\in \mathcal{F}(h)$, $z\neq v_1$ and $z\in N(h)$. Let an edge containing $z$ and $h$ be $e_9=\{z,h,z'\}$. If $z'\notin \{u,v_1\}$, then
edges $e_9,e_2$ and $e_3$ form a triangle. If $z'=u$, then we have Case 1 with $z'$ in the role of $v_1$. So, $z'=v_1$.

Let $G_3$ be obtained from $G$ by adding  edge $e_{10}=\{u,t,v_1\}$. By definition, $G_3$ has a triangle $T$ formed by $e_{10}$ and, say $g_1$ and $g_2$. Since both edges in $G$ containing $v$ have two common vertices with $e_{10}$,
the core vertices of $T$ in $e_{10}$ are $u$ and $v_1$.  We may assume $u\in g_1$.
Because of $e_3$, $h\in g_1\cup g_2$. Suppose $h\in g_1$, say $g_1=\{u,h,v_6\}$. Since $g_1$ shares only one vertex with $e_{10}$, $v_6\notin \{t,v_1\}$. Since Case 1 is already covered, $v_6\neq z$. Then
edges $e_9,e_2$ and $e_{10}$ form a triangle. This contradiction implies $h\in g_2-g_1$. In this case, 
edges $g_1,g_2$ and $e_1$ form a triangle. This finishes Case 2.

\medskip
{\bf Case 3:} For at least $10\ell^3$ edges $e_1=\{h,u,t\}\in \mathcal{F}(h)$, $z\neq v_1$ and $z\notin N(h)$. As in Case 2, let $G_3$ be obtained from $G$ by adding  edge $e_{10}=\{u,t,v_1\}$. By definition, $G_3$ has a triangle $T$ formed by $e_{10}$ and, say $g_1$ and $g_2$. Again, the core vertices of $T$ in $e_{10}$ are $u$ and $v_1$.  We may assume $u\in g_1$.
Again, $h\in g_1\cup g_2$. If $h\notin g_1$, then edges $g_1,g_2$ and $e_1$ form a triangle. So, we may assume that
$g_1=\{u,h,v_2\}$.

By the case, $G$ has no edges $\{u,t,v_1\}$ and  $\{u,t,v_2\}$. Hence $uh$ is a bad pair,
and we have
 at least $10\ell^3$ such edges  containing $h$.    We can try to add the edge
$\{u,h,u'\}$ for each two such edges. Since pairs $uh$ and $uh'$ are bad, the core vertices in each obtained triangle should be always $u$ and $u'$. So most of these vertices $u$ must have a common neighbor, say $w$ (of high degree).
Since $u$ is unsupported and $t$ is $1$-flat, all vertices $u,v_1,v_2,z$ are low and so $w\notin \{u,z,v_1,v_2\}$.
Let the edge containing $u$ and $w$ be $e_{11}=\{u,w,u_1\}$. Now we know all edges containing $u$ apart from the fact that we do not know whether $u_1$ is one of $v_1,v_2$ or $z$.

If  $w$ is adjacent to $h$, then we simply repeat the argument of Subcase 1.2.1. So below we assume 
 \begin{equation}\label{320}
  w\notin N(h).
 \end{equation}

\medskip
{\bf Case 3.1:}  $u_1=z$. Then let $G_4$ be obtained from $G$ by adding   edge  $e_{12}=\{u,t,w\}$. By definition, $G_4$ has a triangle $T$ formed by $e_{12}$ and, say $f_1$ and $f_2$. Again, the core vertices of $T$ in $e_{12}$ are $u$ and $w$.  We may assume $u\in f_1$. Now $z\in f_1\cup f_2$. Since we know all edges containing $u$ and three of these edges have $2$ common vertices with $e_{12}$, the only candidates for $f_1$ are $e_3$ and $e_4$.
Neither of them contains $z$, so  $z\in f_2-f_1$. The common vertex of $f_1$ and $f_2$ cannot be $h$ because $w\notin N(h)$,
so by symmetry, we may assume that
$f_2=\{u,u_1,v_1\}$. Then edges $e_2,f_2$ and $e_3$ form a triangle.

\medskip
{\bf Case 3.2:}  $u_1\neq z$, but there is an edge $e_{13}$ containing $z$ and $w$, say $e_{13}=\{z,w,z_1\}$.
If $z_1\neq u_1$, then edges $e_{13},e_2$ and $e_{11}$ form a triangle, so suppose $z_1= u_1$. Furthermore, if 
$z_1=v_i$ for some $i\in [2]$, then edges $e_{13},e_2$ and $e_{2+i}$ form a triangle. So below we assume
 \begin{equation}\label{32}
 z_1= u_1\notin \{v_1,v_2\}. 
 \end{equation}
 
 As in Case 3.1, consider $G_4$  obtained from $G$ by adding   edge  $e_{12}=\{u,t,w\}$.  Again, $G_4$ has a triangle $T$ formed by $e_{12}$ and, say $f_1$ and $f_2$. Again, the core vertices of $T$ in $e_{12}$ are $u$ and $w$.  We may assume $u\in f_1$. Now $u_1\in f_1\cup f_2$. Since we know all edges containing $u$ and three of these edges have $2$ common vertices with $e_{11}$, the only candidates for $f_1$ are $e_3$ and $e_4$. By~\eqref{32},
 neither of them contains $u_1$, so  $u_2\in f_2-f_1$. The common vertex of $f_1$ and $f_2$ cannot be $h$ because $w\notin N(h)$,
so by symmetry, we may assume that
$f_2=\{u,u_1,v_1\}$.

We claim that 
 \begin{equation}\label{321}
  u_1\notin N(h).
 \end{equation}
 
 Indeed, suppose there is an edge $e_{14}=\{h,u_1,u_2\}\in E(G)$. Since $w\notin N(h)$, $u_2\neq w$. Since we know all neighbors of $t$ and $u$, we know that $u_2\notin \{u,t\}$. Then edges $e_{14},e_1$ and $e_{10}$ form a triangle.
 This contradiction proves~\eqref{321}.

Now, consider $G_5$  obtained from $G$ by adding   edge  $e_{15}=\{t,w,u_1\}$.  Again, $G_5$ has a triangle $T$ formed by $e_{15}$ and, say $j_1$ and $j_2$. Suppose first that $t$ is a core vertex of $T$ and
 $t\in j_1$. Then $j_1\in \{e_1,e_2\}$. So, the vertex  $q\in j_1\cap j_2$ is one of $u,h$ or $z$. Since the only edge containing $u$ and at least one of $w$ and $u_1$ is $e_{11}$, and it has two common vertices with $e_{15}$, $q\neq u$. By~\eqref{320} 
 and~\eqref{321}, $q\neq h$. So, $q=z$. But $j_2$ must have exactly one common vertex with $\{w,u_1\}$. Then
 edges $j_{2},e_1$ and $e_{11}$ form a triangle. Therefore, the core vertices of $T$ are $w$ and $u_1$.
 
 We may assume $w\in j_1$. Since the codegree of $wu_1$ is at least $3$, $j_1\cup j_2=\{z,u,v_1\}$. But $u$ is not in an edge in which one vertex is in $\{w,u_1\}$ and the other is in $\{z,v_1\}$. This contradiction finishes Case 3.2.

\medskip
{\bf Case 3.3:}  $z\notin  N(w) $. Note that $h$ and $w$ are the only high vertices adjacent to $v$ or $u$.
We have at least $3\ell^3$ such configurations containing $h$. Fix one such configuration.
Among the remaining $3\ell^3-1$ similar configurations with the same vertices $h$ and $w$, find one with vertices $v',u',z',v'_1,v'_2$ such that the distance from $z'$ to $\{v,u\}$ is at least $3$ (we can do it because $z'$ is not adjacent to $h$ or $w$). Consider $G_6$  obtained from $G$ by adding   edge  $e_{16}=\{t,u,z'\}$.  Again, $G_6$ has a triangle $T$ formed by $e_{16}$ and, say $m_1$ and $m_2$. By the choice of $z'$,  it cannot be a  core vertex, so $t$ and $u$ must be. But both edges
containing $t$ share two vertices with $e_{16}$. This contradiction shows that there are less than 
 $10\ell^3$ edges $e_1=\{h,u,t\}\in \mathcal{F}(h)$ with  $z\neq v_1$ and $z\notin N(h)$. 
 
 Together with Cases 1 and 2, this implies that $|\mathcal{F}(h)|\leq 30\ell^3$ for $h\in \{x,y\}$.
This proves the lemma.
\end{proof}

\begin{lemma}\label{2flat2}
Let $h\in M$. Then for every edge $\{h,u,t\}\in E(G)$ such that neither of $t$ and $u$ is  supported, $d(t)=2$, $d(u)\leq 8$ and $d(hu)=2$, we have $\{u,t\}\subseteq \mathbf{R}'_9 $.
\end{lemma}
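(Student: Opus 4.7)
We argue by contradiction. Suppose that $u\notin \mathbf{R}'_9$ or $t\notin \mathbf{R}'_9$. Since $u$ and $t$ both lie in $L_8$ and are adjacent, and since $\mathbf{R}'_9\supseteq L\cap N(L_8\cap \mathbf{R}_9)$, if either of them lies in $\mathbf{R}_9$ then its partner in $\{h,u,t\}$ automatically lies in $\mathbf{R}'_9$. So it suffices to derive a contradiction from the stronger assumption that neither $u$ nor $t$ lies in $\mathbf{R}'_8$ and that $t\notin R_9$.

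First I make two easy reductions. Since $u\notin R_4$ and $d(hu)=2$, we get $d(u)\geq 3$; and if $d(u)=3$, then $\{h,u,t\}$ is an $(M,3,2)$ edge, so Corollary~\ref{remM22} puts $u,t\in \mathbf{R}'_5\subseteq \mathbf{R}'_9$, a contradiction. Hence $d(u)\geq 4$. Next I claim $t$ is $2$-flat: if instead $t$ were $1$-flat, then the proof of Lemma~\ref{lemma 1flat 2 vertex implies rich} applies here, because an inspection of that proof shows that it uses only $h\in M$ together with the hypotheses that $u$ and $t$ are unsupported and outside $\mathbf{R}'_8$ (the stronger condition $h\in H$ in its statement is never actually invoked inside the argument). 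The resulting conclusion $d(hu)\geq 3$ then contradicts $d(hu)=2$. So $t$ is $2$-flat; write $t$'s other edge as $\{h^*,t,w\}$ with $h^*\in M$ and $w\in L$ (the latter since $t$ is unsupported). Since $t\notin R_4$ forbids $d(ht)=d(t)=2$, we must have $h^*\neq h$.

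It remains to derive a contradiction from the configuration $\{h,u,t\},\{h,u,t'\},\{h^*,t,w\}$ with $h\neq h^*$ both in $M$ and $u,t',w\in L$. The key input is $u\notin R_5$, applied to the edge $\{h,u,t\}$: this gives three alternatives, namely (i) $hu$ is a good pair, (ii) $ht$ is a good pair, or (iii) there exist $a,b\notin \{h,u,t\}$ with $\{u,a,b\},\{t,a,b\}\in E(G)$, which since $t$'s only other edge is $\{h^*,t,w\}$ forces $\{a,b\}=\{h^*,w\}$, i.e. $\{u,h^*,w\}\in E(G)$. In each subcase I introduce a carefully chosen non-edge from among $\{u,t,w\},\{h,t,w\},\{h,h^*,u\}$ and analyse the forced $C_3^{(3)}$. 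Because each edge of $t$ shares two vertices with these non-edges, $t$ cannot be a core vertex; the resulting loose path, together with the known edges $\{h,u,t\},\{h,u,t'\},\{h^*,t,w\}$ and (in (iii)) $\{u,h^*,w\}$, combined with Lemma~\ref{coneighbors} applied to $(h,u)$ to constrain $t'$ and with Claim~\ref{claim good pair double neighbor} (which in the sub-case $u\neq w$ shows that $hu$ is a bad pair, since $t$ then has no double neighbours), yields either a $C_3^{(3)}$ already present in $G$ or a forced structure that places $u$ into one of $R_3,R_4,R_5$ or forces $d(hu)\geq 3$. Every option contradicts our hypotheses.

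The main obstacle is the ``house'' subcase~(iii). Because $d(u)\geq 4$, Lemma~\ref{lemma h22 h23 house} does not apply directly, so one cannot immediately close out the argument by the non-edge $\{u,t,h^*\}$; instead, one must track the extra edges of $u$ beyond $\{h,u,t\},\{h,u,t'\},\{u,h^*,w\}$ and exploit $u\notin R_8$ together with Lemma~\ref{coneighbors} to pin down enough local structure to force a $C_3^{(3)}$ in $G$.
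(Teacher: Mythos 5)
Your opening moves are sound: the reduction to ``neither $u$ nor $t$ lies in $\mathbf{R}_9$'' is fine, the bound $d(u)\geq 4$ via $R_4$ and Corollary~\ref{remM22} is correct, and the observation that $t$ must be $2$-flat (via Lemma~\ref{lemma 1flat 2 vertex implies rich}, noting that a $1$-flat $t\notin\mathbf{R}_1$ would force $h\in\{x,y\}\subseteq H$ so the hypothesis $h\in H$ is in fact available) is valid. You also correctly extract the trichotomy from $u\notin R_5$ applied to $\{h,u,t\}$: either $hu$ is good, or $ht$ is good, or there is a ``house'' $\{u,a,b\},\{t,a,b\}$ which, since $t$'s only other edge is $\{h^*,t,w\}$, forces $\{a,b\}=\{h^*,w\}$. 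This matches the engine the paper uses.

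The gap is in the case analysis, and you acknowledge it yourself. The paper also reduces to ``both $hu$ and $ht$ bad, plus the house,'' but the key to closing that case out is the non-edge $\{t,t',u\}$, where $t'$ is the third vertex of the \emph{second} $\{h,u\}$-edge $e_2=\{h,u,t'\}$. Your candidate non-edges $\{u,t,w\},\{h,t,w\},\{h,h^*,u\}$ all involve only $t$'s second edge or the two non-low vertices $h,h^*$, and none of them is $\{t,t',u\}$. Adding $\{t,t',u\}$ is what lets one use the structural fact $d(hu)=2$ decisively: both edges of $t$ share two vertices with $\{t,t',u\}$, so the forced triangle has core pair $\{u,t'\}$, the resulting $u,t'$-link must contain $h$ because of $e_2$, and since $d(hu)=2$ it cannot contain $h$ and $u$ in the same edge, whence one obtains a $u,h$-link that closes a $C_3^{(3)}$ with $e_1$. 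None of your three non-edges pins down the link this cleanly: for instance adding $\{h,t,w\}$ only yields an $h,w$-link avoiding $t$, which can legitimately pass through $u$ via $e_2$ and $\{u,h^*,w\}$ without producing a contradiction, and adding $\{h,h^*,u\}$ yields a $u,h^*$-link whose first edge must be some unknown fourth edge of $u$ (here $d(u)\geq 4$ hurts rather than helps). So ``tracking the extra edges of $u$'' as you suggest is exactly where the argument stalls; the paper avoids this entirely by working with $e_2$'s third vertex $t'$ rather than with $t$'s low neighbor $w$. As written, subcase~(iii) is unresolved and I do not see that your listed non-edges can resolve it.

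A smaller point: your claim ``Because each edge of $t$ shares two vertices with these non-edges, $t$ cannot be a core vertex'' is false for the non-edge $\{h,h^*,u\}$, where $\{h^*,t,w\}$ shares only $h^*$ (and of course $t$ is not even in that non-edge). This is harmless to the logic but symptomatic of the sketchiness of the subcase analysis.
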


\begin{proof} Suppose $G$ contains edges $e_1=\{h,u,t\}$ and $e_1=\{h,u,w\}$ such that $u,t\in  L\setminus  \mathbf{R}_9 $,
$d(t)=2$, 
$d(u)\leq 8$, $d(hu)=2$ and
neither of $t$ and $u$ is  supported.

Since  $t\notin  \mathbf{R}_4' $,
\begin{equation}\label{deg1}
    d(ht)=1.
\end{equation}

{\bf Case 1:} There is an edge $e_3=\{t,w,z\}\in E(G)$ containing $\{t,w\}$.  Let $e_4=\{t,h,w\}$. By~\eqref{deg1}, 
$e_4\notin E(G)$. Let $G'=G+e_4$.
Then $G'$ has a triangle  containing $e_4$. Since both $e_1$ and $e_3$ have two common vertices with $e_4$, 
there is an $h,w$-link avoiding $t$ formed by some edges $g_1$ and $g_2$, say $h\in g_1$. In view of $e_2$,
$u\in g_1\cup g_2$, but since $d(hu)=2$, $u\notin g_1$. Thus $u\in g_2-g_1$, and hence $g_1$ and $g_2$ form also 
an $h,u$-link. Then together with $e_1$ they form a triangle in $G$, a contradiction.

{\bf Case 2:} $d(ut)>1.$ Since $d(t)=2$, $d(ut)=2$, and there is an
edge $e_5=\{t,u,u'\}\in E(G)$. By Case 1, $u'\neq w$, and $e_6=\{t,w,u\}$ is not in $E(G)$. 
 Let $G'=G+e_6$.
Then $G'$ has a triangle containing $e_6$. Since both $e_1$ and $e_5$ contain $\{u,t\}$, 
there is a $u,w$-link avoiding $t$, say with edges $g_1$ and $g_2$ where $u\in g_1$. In view of $e_2$,
$h\in g_1\cup g_2$, but since $d(hu)=2$, $h\notin g_1$. Thus $h\in g_2-g_1$, and hence $g_1$ and $g_2$ form also 
a $u,h$-link. Then together with $e_1$ they form a triangle in $G$, a contradiction.

{\bf Case 3:} Cases 1 and 2 do not hold. 
If the pair $uh$ is good, then because of $e_1$, the degree of $ut$ or of $ht$ would be at least $2$. But by~\eqref{deg1} and Case 2, neither holds.
Hence $uh$ is a bad pair.

Suppose now $th$ is good.
 Let $g_1$ and $g_2$ be the two edges of a $t,h$-link in $G$ with $t\in g_1$. In view of $e_1$,
$u\in g_1\cup g_2$, but since Case 2 does not hold, $u\notin g_1$. Thus $ g_2=e_2$, and 
the common vertex of $g_1$ and $g_2$ is $w$. But then  Case 1 holds, a contradiction.

Since both pairs $uh$ and $th$ are bad and $t\notin  \mathbf{R}_5'  $,  by Lemma~\ref{Daras}, there 
is a pair $\{z,z'\}$ of vertices distinct from $h$ such that $e_7=\{t,z,z'\}\in E(G)$ and 
$e_8=\{u,z,z'\}\in E(G)$. By the case,
$w\notin \{z,z'\}$. Consider again $G'=G+e_6$. One of the pairs contained in $e_6$ must be good.

If $ut$ is good, then in view of $e_1$ either $d(ht)\geq 2$ or $d(uh)\geq 3$. The former inequality is Case 2 and the latter is the claim of our lemma. If $uw$ is good and $g_1$ and $g_2$ are the two edges of a $u,w$-link in $G$ with $u\in g_1$, then
in view of $e_2$,
$h\in g_1\cup g_2$, but since $d(hu)=2$, $h\notin g_1$. Thus $h\in g_2\setminus g_1$, and hence $g_1$ and $g_2$ form a $u,h$-link. Then together with $e_1$ they form a triangle in $G$, a contradiction.

The last possibility is that $tw$ is good. Let $g_1$ and $g_2$ be the two edges of a $t,w$-link in $G$ with $t\in g_1$.
Then, since $e_1$ shares two vertices with $e_6$, $g_1=e_7$. By the symmetry between $z$ and $z'$, we may assume that
$g_2=\{w,z,w'\}$. If $w'\neq h$, then $G$ has a triangle with the edges $g_2,e_8$ and $e_1$. Finally, if $w'=h$, then $G$ has a 
triangle with the edges $g_2,e_7$ and $e_1$.
\end{proof}
	
\begin{lemma}\label{lemma degree 4 double neighbors}
	No unsupported  $4$-vertex $u\in L\setminus \mathbf{R}'_9$ is a double neighbor of a $1$-flat  $2$-vertex.
\end{lemma}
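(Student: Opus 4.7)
The plan is to suppose for contradiction that some unsupported $4$-vertex $u\in L\setminus \mathbf{R}'_9$ is a double neighbor of a $1$-flat $2$-vertex $t$, and then produce a $C_3^{(3)}$ in $G$. Both edges of $t$ contain $u$; since $t$ is $1$-flat with unique $M$-neighbor (call it $h$), these edges take the form $e_1=\{h,u,t\}$ and $e_2=\{u,t,w\}$ with $w\in L$. Because $u\in L_8\setminus \mathbf{R}'_9$ and $t$ is a low neighbor of $u$, the nested definition of $\mathbf{R}'_9$ implies $t\notin \mathbf{R}_9\supseteq \mathbf{R}'_8$, so $t$ satisfies every garbage-set conclusion proved so far; in particular $t\notin R_3$, and applying the definition of $R_3$ to $e_1$ forces $d(hu)\ge 2$. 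Since $u$ is unsupported with $d(u)=4$, the two remaining edges of $u$ add no further $M$-vertex, so $d(hu)\in\{2,3\}$.

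If $d(hu)=2$, Lemma~\ref{2flat2} applies directly to $e_1$: both $u$ and $t$ are unsupported, $d(t)=2$, $d(u)=4\le 8$, and $d(hu)=2$. The resulting conclusion $\{u,t\}\subseteq \mathbf{R}'_9$ contradicts $u\notin \mathbf{R}'_9$. For the remaining case $d(hu)=3$, the four edges at $u$ are fully determined: $e_3=\{h,u,v_1\}$ and $e_4=\{h,u,v_2\}$ with $v_1\ne v_2$ in $L$. If the pair $ut$ is good, applying Claim~\ref{claim good pair double neighbor} to $e_2$ and using $d(tw)=1$ forces $w$ to be a double neighbor of $u$, so $w=v_1$ say. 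I would then add the non-edge $\{h,t,v_2\}$ (absent because the two edges of $t$ are known) and trace the $C_3^{(3)}$ it forces in $G+\{h,t,v_2\}$: since $e_1$ and $e_4$ each share two vertices with the new edge, any such triangle must use $e_2$ or $e_3$, and in every resulting subcase the auxiliary edge completing the triangle combines with $e_2$ and $e_4$ to yield a $C_3^{(3)}$ already in $G$. If instead $ut$ is bad, an analogous argument with the non-edges $\{h,t,w\}$ or $\{t,v_1,v_2\}$ closes out: the bad-pair hypothesis severely restricts the possible links that could close the triangle, and Claim~\ref{claim good pair double neighbor} together with Claim~\ref{Claim 2 deg 2} eliminates the remaining obstructions.

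The main obstacle is the case analysis in $d(hu)=3$: although $u$'s four edges are fixed, the auxiliary edges at $h$, $v_1$, $v_2$, and $w$ are not, and the forced $C_3^{(3)}$ in each added non-edge splits into several subcases that must be individually traced back to a genuine $C_3^{(3)}$ in $G$. The bookkeeping is in the spirit of Lemmas~\ref{2flat2} and~\ref{lemma 1flat 2vtx double neighbor with 5vertex}, but substantially lighter because the rich codegree $d(hu)=3$ together with the double-neighbor structure at $t$ already fix most of the incidences.
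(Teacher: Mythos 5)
Your reduction to the two cases $d(hu)=2$ and $d(hu)=3$ is sound, and the $d(hu)=2$ branch via Lemma~\ref{2flat2} does work (the paper instead invokes Lemma~\ref{lemma 1flat 2 vertex implies rich} to get $d(hu)\geq 3$ outright, avoiding this case altogether, but your route is a valid alternative). The problem is in the $d(hu)=3$ case. You split on whether $ut$ is a good pair and, in the ``good'' branch, apply Claim~\ref{claim good pair double neighbor} to $e_2$ to conclude $w\in\{v_1,v_2\}$. But in this lemma $u$ is a \emph{double neighbor} of $t$, i.e.\ both edges of $t$ contain $u$; consequently no edge of $G$ contains $t$ but not $u$, so a $u,t$-link cannot exist and $ut$ is \emph{always} a bad pair. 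Your ``good'' branch is therefore vacuous, and the only real case is the one you handle with a vague appeal to $\{h,t,w\}$ or $\{t,v_1,v_2\}$, Claim~\ref{claim good pair double neighbor}, and Claim~\ref{Claim 2 deg 2}. If one traces the triangle forced by adding $\{h,t,w\}$, the core vertices must be $h$ and $w$, giving an $h,w$-link avoiding $t$; but without further constraints that link need not pass through $u$, and then no $C_3^{(3)}$ in $G$ is produced. So the $d(hu)=3$ case is not closed.

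The missing ingredient is the observation the paper uses: once $d(hu)=3$, the vertex $h$ is $u$'s only $M$-neighbor, so $u\notin\mathbf{R}'_2$ forces $hu$ to be a \emph{good} pair. The unique $h,u$-link must start with $\{u,t,w\}$ (the only edge at $u$ not containing $h$), and to avoid a triangle with each of $\{h,u,v_1\}$, $\{h,u,v_2\}$ its five vertices must include $v_1$ and $v_2$; this forces $w\in\{v_1,v_2\}$ and the second link edge to be $\{h,v_1,v_2\}$. With that structure pinned down, adding $\{h,t,w\}$ (where, say, $w=v_2$) makes $t$ unavailable as a core vertex, so there is an $h,v_2$-link avoiding $t$; that link must pass through $u$ (else it closes a triangle with $\{h,u,v_2\}$), its $u$-edge must be $\{h,u,v_1\}$, and the remaining edge $\{v_1,v_2,a\}$ then closes a $C_3^{(3)}$ with $\{h,u,v_1\}$ and $\{u,t,v_2\}$ already in $G$. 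Replace the $ut$ good/bad dichotomy with this use of $hu$ good and the argument goes through.
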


\begin{proof} 
Suppose an unsupported  $4$-vertex $u\in L\setminus \mathbf{R}'_9$ is a double neighbor of a $1$-flat  $2$-vertex $t$,
say $\{h,u,t\},\{u,t,z\}\in E(G)$, where $h\in H$ and $z\in L$. 
 By Lemma~\ref{lemma 1flat 2 vertex implies rich}, $d(hu)\geq 3$, and  since $u\notin \mathbf{R}'_4$, $d(hu)=3$. Let $\{h,u,v_1\}$ and $\{h,u,v_2\}$ be the other two edges containing $h$ and $u$,  possibly $z\in \{v_1,v_2\}$. 

Since $u\notin \mathbf{R}'_2$,
 $hu$ is a good pair. Since $\{u,t,z\}$ is the only edge containing $u$ but not $h$, this edge must be in each $u,h$-link. Furthermore, to avoid a $C_3^{(3)}$ in $G$ with any of the edges $\{h,u,t\}$, $\{h,u,v_1\}$ and $\{h,u,v_2\}$, each $u,h$-link must contain all $t$, $v_1$, and $v_2$. This implies that actually $z\in \{v_1,v_2\}$. Assume without loss of generality $z=v_2$. The second edge of this path must be $\{h,z,v_1\}$.

Consider the non-edge $\{h,t,z\}$. Since this non-edge intersects every edge containing $t$ in two vertices, there must be 
an $h,z$-link that avoids $t$. To avoid a $C_3^{(3)}$ in $G$ with the edge $\{h,u,z\}$, this link must contain $u$. The only edge of $G$ that contains $u$, does not contain $t$, and does not contain both $h$ and $z$ is $\{h,u,v_1\}$.  Furthermore, the second edge in this path must be $\{v_1,z,a\}$ for some $a\not\in \{h,u,v_1,z,t\}$, but then the edges $\{h,u,v_1\}$, $\{v_1,z,a\}$ and $\{u,a,t\}$ form a $C_3^{(3)}$ in $G$, a contradiction.
\end{proof}

	\begin{lemma}\label{lemma rich vertices are helpful}
	Let $h\in M$ and $u\in L\setminus \mathbf{R}'_9$. If  $hu$ is a rich pair, then $u$ is helpful.
	\end{lemma}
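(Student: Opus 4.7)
The plan is to verify the defining helpful inequality
\[
d(u)+2\,\mathrm{supp}(u)+\mathrm{rich}(u)+\tfrac{1}{2}\mathrm{reas}(u) \;\ge\; \tfrac{1}{2}\mathrm{flat}(u)+4.
\]
First, because $u\notin \mathbf{R}_4'$ and $d(hu)\ge 3$, we have $d(u)>d(hu)\ge 3$, so $d(u)\ge 4$. Since $\mathbf{R}_8'\subseteq \mathbf{R}_9'$ we also know $u\notin\mathbf{R}_8'$, so Lemma~\ref{claim supported or high degree implies helpful} applies whenever $u$ is supported or $d(u)\ge 6$; in either of these cases $u$ is already helpful. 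The remaining hard case is $u$ unsupported with $d(u)\in\{4,5\}$, which forces $d(hu)\in\{3,4\}$.

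Enumerate the $d(hu)$ edges through the pair $hu$ as $e_i=\{h,u,v_i\}$, and apply Lemma~\ref{coneighbors} (whose hypotheses $u\in L\setminus\mathbf{R}_4'$, $d(u)\le 8$, $d(hu)\ge 2$ all hold). One of its two alternatives must occur: (a) some $v_i$ has $d(v_i)\ge 4$ or is a $2^+$-flat $3$-vertex, or (b) at least two of the $v_i$'s are not $1$-flat $2$-vertices. Since $u\notin\mathbf{R}_8'$, Lemma~\ref{lemma no (L,2,2) edges} forbids any edge at $u$ from containing two $1$-flat $2$-vertices, and Lemmas~\ref{lemma degree 4 double neighbors} and~\ref{lemma 1flat 2vtx double neighbor with 5vertex} show that $u$ is not a double neighbor of any $1$-flat $2$-vertex. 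Consequently every $1$-flat $2$-neighbor of $u$ accounts for exactly one edge in $\mathrm{flat}(u)$, and combining this with Lemma~\ref{coneighbors} yields
\[
\mathrm{flat}(u)\le d(u)-1 \text{ in case (a)},\qquad \mathrm{flat}(u)\le d(u)-2 \text{ in case (b)}.
\]

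To close the inequality, classify each $e_i$ according to the type of $v_i$: if $v_i$ is a $2$-vertex or a $1$-flat $3$-vertex, $e_i$ is \emph{needy} and contributes nothing to the LHS; otherwise $e_i$ is \emph{rich with recipient $v_i$} exactly when $d(v_i)\le 7$, $v_i$ is unsupported, $d(hv_i)\le 2$, and the edge is not exceptional, and $e_i$ is \emph{reasonable} (contributing $\tfrac12$ to the LHS) in every remaining case. Unpacking these conditions, each non-needy edge produced by Lemma~\ref{coneighbors} is reasonable whenever $d(v_i)\ge 8$, or $v_i$ is supported, or $d(hv_i)\ge 3$, or the edge is exceptional. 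Matching these reasonable contributions against the $\mathrm{flat}(u)$-bound resolves every sub-case except the tight regime $d(u)=5$ with case~(a), and $d(u)=4$.

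The main obstacle is the borderline case $d(u)=4$ with $\mathrm{flat}(u)=2$, where the earlier inequalities leave a deficit of $1$ on the right-hand side and Lemma~\ref{coneighbors} alone does not guarantee a reasonable edge. Here the surgically chosen definition of an \emph{exceptional} edge is essential: a $(M,L,4)$ edge $\{h,v,u\}$ with $d(hv)=2$, $d(v)\ge 4$, $hu$ rich, and $\mathrm{flat}(u)=2$ is excluded from the rich category by fiat and therefore counts as reasonable, delivering the missing $\tfrac12$ of charge at $u$. The bookkeeping challenge is to confirm that whenever the bound $\mathrm{flat}(u)\le d(u)-1$ is tight, Lemma~\ref{coneighbors}(a) produces a $v_i$ of degree $\ge 4$ with $d(hv_i)=2$, so the exceptional rule actually fires, while simultaneously preventing any double-counting between rich-recipient status and exceptional status. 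This verification drives the remainder of the proof.
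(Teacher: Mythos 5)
Your setup is sound: reduce via Lemma~\ref{claim supported or high degree implies helpful} to $u$ unsupported with $d(u)\in\{4,5\}$, note $d(hu)<d(u)$ because $u\notin\mathbf{R}_4'$, and invoke Lemma~\ref{coneighbors}, Lemma~\ref{lemma no (L,2,2) edges}, and the double-neighbor lemmas to bound $\mathrm{flat}(u)$. But after this point the argument stalls, and you in fact say so: ``This verification drives the remainder of the proof'' is an acknowledgement that the hard part is not done.

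Two concrete gaps. First, you never split on whether $hu$ is a good pair or a bad pair, which is the load-bearing dichotomy in the paper. When $hu$ is a good pair, the $h,u$-link is forced to pass through all of $v_1,v_2,v_3$, and this yields the much stronger conclusion $\mathrm{flat}(u)\le d(u)-4$, immediately closing \eqref{inequality helpful definition}. When $hu$ is a bad pair, the paper exploits $u\notin\mathbf{R}_2'$ to produce a second non-low neighbor $h'$, and then a sub-case analysis on $d(h'u)$ and the nature of $z$ (the third vertex of $\{h',u,z\}$) produces the extra reasonable or rich contributions; your sketch does not perform any of this. Second, the claim that ``Lemma~\ref{coneighbors}(a) produces a $v_i$ of degree $\ge 4$ with $d(hv_i)=2$, so the exceptional rule actually fires'' is not what Lemma~\ref{coneighbors} says — it says nothing about $d(hv_i)$. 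In the paper's borderline sub-case ($d(u)=4$, $\mathrm{flat}(u)=2$, $\mathrm{rich}(u)=0$, $\mathrm{reas}(u)=1$) the conclusion $d(hv_1)\ge 2$ is derived by a separate nontrivial structural argument: both $hv_2$ and $hv_3$ are shown to be good pairs, the corresponding links are forced to run through $\{h,u,v_1\}$, and adding the non-edge $\{v_1,v_2,v_3\}$ produces a link that forces an edge containing $\{h,v_1\}$ but not $u$. Only then does the rich-or-exceptional dichotomy on $d(hv_1)$ deliver the missing reasonable edge. Without this chain of reductions the proposal does not establish the lemma.
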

	
	\begin{proof}
	Since $hu$ is rich, $d(hu)\geq 3$. Let $e_1=\{h,u,v_1\}$, $e_2=\{h,u,v_2\}$ and $e_3=\{h,u,v_3\}$ be edges in $G$. If $u$ is supported or $d(u)\geq 6$, then by Lemma~\ref{claim supported or high degree implies helpful}, $u$ is helpful. So we assume $u$ is unsupported and $d(u)\leq 5$.
	%even if $\mathrm{flat}(u)=8$, \eqref{inequality helpful definition} is satsified, so we may assume $d(u)\leq 7$. Note that b
Since $u\notin \mathbf{R}'_4$,
 $d(u)\geq 4$, thus $4\leq d(u)\leq 5$.  We will consider cases based on the degree of $u$ and whether $hu$ is a good pair or a bad pair.
	
	\textbf{Case 1:} $hu$ is a good pair. Since $hu$ is a good pair, there is an $h,u$-link, and it must contain the vertices $v_1,v_2$ and $v_3$. By symmetry, we can assume the link contains the edges $e_4=\{h,v_1,v_2\}$ and $e_5=\{v_2,v_3,u\}$. This implies that neither $v_1$ nor $v_2$ is $1$-flat. %If $d(u)\geq 6$, then we have that $\mathrm{flat}(u)\leq d(u)-2$, and so \eqref{inequality helpful definition} is satisfied. 
Since $d(uv_3)\geq 2$, by the definition of  $\mathbf{R}'_9$ (when $d(u)=5$) or
Lemma~\ref{lemma degree 4 double neighbors} (when $d(u)=4$, $v_3$ is not a $1$-flat $2$-vertex. It follows that none of $e_1,e_2,e_3,e_5$ contains a $1$-flat vertex and hence $\mathrm{flat}(u)\leq d(u)-4$.
Then
 \eqref{inequality helpful definition} holds.
	
	\textbf{Case 2:} $hu$ is a bad pair and $d(u)= 5$. By Lemma~\ref{coneighbors}, one of the vertices $v_i$ is not a $1$-flat $2$-vertex. Since $u\notin \mathbf{R}'_2$,
	 $u$ is adjacent to some other vertex $h'\in M$, say $\{u,h',z\}\in E(G)$, where $h'\neq h$, but $z$ may be one of the $v_i$'s. Since $d(u)=5$, $1\leq d(h'u)\leq 2$.
	
	\textbf{Case 2.1:}  $d(h'u)=2$. By Lemma~\ref{coneighbors}, either both edges containing $h'$ and $u$ are not $1$-flat $2$-vertices, implying $\mathrm{flat}(u)\leq 2$, or at least one of the edges containing $h'$ and $u$ is either reasonable or rich with recipient $u$, so $\mathrm{flat}(u)\leq 3$ and $\mathrm{rich}(u)+\frac{1}{2}\mathrm{reas}(u)\geq \frac{1}{2}$. In either case, \eqref{inequality helpful definition} holds, so $u$ is helpful.
	
	\textbf{Case 2.2:} $d(h'u)=1$. If $\{h',u,z\}$ is reasonable or rich with recipient $u$, then $\mathrm{flat}(u)\leq 3$, while $\mathrm{rich}(u)+\frac{1}{2}\mathrm{reas}(u)\geq \frac{1}{2}$, so \eqref{inequality helpful definition} holds. Thus $\{h',u,z\}$ must contain either a $2$-vertex or a $1$-flat $3$-vertex. However since $u\notin \mathbf{R}'_3$,
 $z$ cannot be $1$-flat, so $z$ must be a $2$-flat $2$-vertex.  This implies $\mathrm{flat}(u)\leq 3$. %If $d(u)\geq 6$, then this is sufficient for \eqref{inequality helpful definition}, so we may assume $d(u)=5$. 
	
	Suppose first  that $\{h,u,z\}$ is an edge, say  $z=v_1$. In this case, $d(v_1)=2$, so by 
	Lemma~\ref{coneighbors}, $hu$ is contained in at least one edge that does not contain a $1$-flat $2$-vertex and is not $\{h,u,z\}$. This implies that $\mathrm{flat}(u)\leq 2$, so \eqref{inequality helpful definition} holds. Thus $\{h,u,z\}$ is not an edge.
	
	If $d(uz)=2$, then since $z$ is a $2$-flat $2$-vertex, again $\mathrm{flat}(u)\leq 2$, and so \eqref{inequality helpful definition} holds.
Thus  $d(uz)=1$. In this case, since $u\notin \mathbf{R}'_5$,
 $h'u$ and $h'z$ are bad pairs. So by Lemma~\ref{Daras}, there exist vertices $a,b\in V(G)$ such that $\{u,a,b\}$ and $\{z,a,b\}$ are edges. Since $z$ is $2$-flat, one of $a$ or $b$ must be in $M$, say $a\in M$. Furthermore, since $d(ab)\geq 2$ and $u\notin \mathbf{R}'_4$,
  $d(b)\geq 3$.  So $\{u,a,b\}$ is either reasonable or rich with recipient $u$. In either case, $\mathrm{rich}(u)+\frac{1}{2}\mathrm{reas}(u)\geq \frac{1}{2}$ while $\mathrm{flat}(u)\leq 3$ (as neither $\{h',u,z\}$ nor $\{u,a,b\}$ contain a $1$-flat $2$-vertex), so \eqref{inequality helpful definition} holds, i.e.,  $u$ is helpful.
	
\textbf{Case 3:} $hu$ is a bad pair and $d(u)=4$. Since $u\notin \mathbf{R}'_4$,
 $u$ is adjacent to a vertex in $M$ that is not $h$, say $\{u,h',z\}$ is an edge, where $h'\in M\setminus\{h\}$. Since $d(h'u)=1$
 and $u\notin \mathbf{R}'_3$, $z$ cannot be $1$-flat. 

\textbf{Case 3.1:} $z$ is a $2$-flat $2$-vertex. We claim that
\begin{equation}\label{badpairs}
    \mbox{\em $h'z$ and $uz$ are bad pairs.}
\end{equation}
 Indeed, since $z\in N(u)$ and $u\notin \mathbf{R}'_4$, $d(h'z)=1$. Hence 
 if $d(uz)=1$,  then by Claim~\ref{claim good pair double neighbor}, $h'u$ and $h'z$ are bad pairs.   
 
 Suppose now  $d(uz)=2$, say $z=v_1$.  Then  $h$ and $h'$ are not adjacent since any edge containing $h$ and $h'$ cannot contain $u$ or $z$ and misses one of $v_2$ or $v_3$, so such an edge would create a $C_3^{(3)}$ in $G$ with $\{h',u,z\}$ and either $\{h,u,v_2\}$ or $\{h,u,v_3\}$. 
 If there was an $h',z$-link, it would have to use the edge $\{z,u,h\}$, but there is no edge connecting $u$ or $h$ to $h'$ that does not contain $z$. Thus $h'z$ is bad. If there was an $h',u$-link, then it would  contain $z$ to avoid a $C_3^{(3)}$ in $G$ with the edge $\{h',u,z\}$, and so the edge $\{h,u,z\}$ must be contained in the path, but  since $h$ and $h'$ are not adjacent, there is no edge that can connect $h'$ to one of $h$ or $z$ that does not contain $u$. Thus, $h'u$ is also bad. This proves~\eqref{badpairs}.

Since $u\notin \mathbf{R}'_5$, by~\eqref{badpairs},
 there exist edges $\{u,a,b\}$ and $\{z,a,b\}$ for some $a,b\in V(G)$. As we already know all edges containing $u$, $\{u,a,b\}=\{h,u,v_i\}$ for some $i\in \{1,2,3\}$, assume without loss of generality $\{u,a,b\}=\{h,u,v_1\}$. Then consider the non-edge $\{h,u,z\}$, and note that this intersects both edges containing $z$ in two vertices, so 
there should be an $h,u$-link.  But $hu$ is a bad pair, a contradiction.

\textbf{Case 3.2:} $z$ is not a $2$-flat $2$-vertex. Then $\{h',u,z\}$ is either reasonable or rich with recipient $u$. By Lemma~\ref{coneighbors}, one of the vertices $v_i$ is not a $1$-flat $2$-vertex, so $\mathrm{flat}(u)\leq 2$, and $\mathrm{rich}(u)+\frac{1}{2}\mathrm{reas}(u)\geq \frac{1}{2}$. Thus, \eqref{inequality helpful definition} holds unless  $\mathrm{flat}(u)=2$, $\mathrm{rich}(u)=0$, and $\mathrm{reas}(u)=1$. By symmetry, assume that $v_1$ is not a $1$-flat $2$-vertex, while $v_2$ and $v_3$ are. By Lemma~\ref{lemma degree 4 double neighbors}, $d(uv_2)=d(uv_3)=1$. Furthermore, 
Since $u\notin \mathbf{R}'_2$,
 $hv_2$ an $hv_3$ are good pairs. Consider first an $h,v_2$-link. To avoid a $C_3^{(3)}$ in $G$ with $\{h,u,v_2\}$, $u$ must be in this link. Since $d(uv_2)=1$, either $\{h,u,v_1\}$ or $\{h,u,v_3\}$ must be in this link, but if $\{h,u,v_3\}$ was in this link, then the second edge would need to contain $v_2$ and $v_3$, which contradicts Lemma~\ref{lemma no (L,2,2) edges}. Thus, the  $h,v_2$-link must contain the edge $\{h,u,v_1\}$ and an edge $\{v_1,v_2,w_1\}$. Similarly, there is an $h,v_3$-link containing the edges $\{h,u,v_1\}$ and $\{v_1,v_3,w_2\}$ for some vertices $w_1$ and $w_2$, possibly equal. This gives us all the edges incident with $v_2$ or $v_3$.

Consider the non-edge $\{v_1,v_2,v_3\}$, and let $T$ be a $C_3^{(3)}$ in $G+\{v_1,v_2,v_3\}$. Note that $v_2$ and $v_3$ cannot both be core vertices in $T$ since the only edges containing $v_2$ or $v_3$ that do not contain two vertices in $\{v_1,v_2,v_3\}$ are $\{h,u,v_2\}$ and $\{h,u,v_3\}$, which share two vertices. Thus, $v_1$ is a core vertex in $T$, and we may assume by symmetry that $v_2$ is as well. Since $v_2$ is a core vertex, $\{h,u,v_2\}$ must be an edge of $T$, which implies that $v_1$ is in an edge containing one of $h$ or $u$, but not both. If $v_1$ is in an edge containing $u$ but not $h$, this edge must be $\{h',u,z\}$ (i.e. $v_1=z$), but then $\{h,u,v_2\}$, $\{v_1,v_2,w_1\}$ and $\{v_1,u,h'\}$ form a $C_3^{(3)}$ in $G$, a contradiction. So there must be an edge containing $v_1$ and $h$, but not $u$. This implies that $d(hv_1)\geq 2$. If $d(hv_1)\geq 3$, then $hv_1$ is a rich pair so $\{h,u,v_1\}$ is reasonable, which makes $\mathrm{reas}(u)=2$, giving us that $u$ helpful. Similarly, if $d(hv_1)=2$, then $\{h,u,v_1\}$ is exceptional and thus reasonable, so again $u$ is helpful.
\end{proof}

\begin{claim}\label{claim exceptional vertices are helpful}
     Let $\{h,u,v\}$ be an exceptional edge such that $h\in M$, and $v\in L\setminus \mathbf{R}'_9$ is the exception in this edge. Then $v$ is helpful.
\end{claim}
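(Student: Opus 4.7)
The plan is to first apply Lemma~\ref{claim supported or high degree implies helpful} to reduce to the case where $v$ is unsupported and $d(v)\in\{4,5\}$ (the exceptional definition forces $d(v)\geq 4$). Since $\{h,u,v\}$ is exceptional, it is not rich by definition, and since $d(u)=4$ and $d(v)\geq 4$ neither $u$ nor $v$ is a $2$-vertex or a $1$-flat $3$-vertex, so it is not needy either; hence $\{h,u,v\}$ is reasonable, contributing $\mathrm{reas}(v)\geq 1$.

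Using $d(hv)=2$, identify the second edge $e'=\{h,v,w\}$ with $w\neq u$. The key step is to show $e'$ is not needy, so that it contributes either $1$ to $\mathrm{rich}(v)$ (if rich with recipient $v$) or $\tfrac12$ to $\mathrm{reas}(v)$. To see this, rule out $w$ being a $2$-vertex or a $1$-flat $3$-vertex. If $w$ is an unsupported $2$-vertex, Lemma~\ref{2flat2} applied to $\{h,v,w\}$ (using $v,w$ unsupported, $d(w)=2$, $d(v)\leq 5$, and $d(hv)=2$) gives $\{v,w\}\subseteq \mathbf{R}'_9$, contradicting $v\notin\mathbf{R}'_9$. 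If $w$ is a supported $2$-vertex, say with supporting edge $\{w,h_1,h_2\}$ where $h_1,h_2\in M$, then either $w\in\mathbf{R}'_3\subseteq\mathbf{R}_9$, which already gives $v\in\mathbf{R}'_9$ via $w\in L_8$; or $h\in\{h_1,h_2\}$, yielding $d(hw)=2=d(w)$ and hence $w\in R_4\subseteq\mathbf{R}_9$, again forcing $v\in\mathbf{R}'_9$; or $h\notin\{h_1,h_2\}$, in which case a saturation argument on a non-edge such as $\{h,v,h_1\}$ or $\{u,v,w\}$ yields a contradiction. The case $w$ a $1$-flat $3$-vertex is ruled out by analogous saturation arguments on non-edges involving $w$'s two $L$-neighbors in its non-$h$ edges.

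Once $e'$ is shown to be rich or reasonable, we have $\mathrm{rich}(v)+\tfrac12\mathrm{reas}(v)\geq 1$. Since $u$ has degree $4$ and $w$ is not a $1$-flat $2$-vertex, neither $\{h,u,v\}$ nor $e'$ contains a $1$-flat $2$-vertex, so $\mathrm{flat}(v)\leq d(v)-2$. Substituting into the helpful inequality,
\[
d(v)+\mathrm{rich}(v)+\tfrac12\mathrm{reas}(v)\geq d(v)+1\geq \tfrac12(d(v)-2)+4\geq \tfrac12\mathrm{flat}(v)+4,
\]
which holds since $d(v)\geq 4$. This verifies that $v$ is helpful. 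The main obstacle is the saturation-based case analysis required to rule out $e'$ being needy in the supported $2$-vertex and $1$-flat $3$-vertex subcases, which requires careful bookkeeping of triangle configurations around $v$ and $w$ in the spirit of the proofs of Lemmas~\ref{lemma rich vertices are helpful} and~\ref{lemma 1flat 2vtx double neighbor with 5vertex}.
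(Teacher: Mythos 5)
Your overall strategy---apply Lemma~\ref{claim supported or high degree implies helpful} to reduce to $v$ unsupported with $d(v)\in\{4,5\}$, note $\{h,u,v\}$ is reasonable, and then try to show the second $\{h,v\}$-edge $e'=\{h,v,w\}$ contributes to $v$'s charge---is a natural attempt, but there is a genuine gap at the crucial step: you cannot, in general, rule out $w$ being a $1$-flat $3$-vertex. The paper's own proof shows precisely that this configuration \emph{does} need to be handled (its Case 2), and it does so by first building structure rather than eliminating the case. Concretely, the paper uses $\mathrm{flat}(u)=2$ and the rich pair $hu$ to identify two $1$-flat $2$-vertices $w_1,w_2$ with $\{h,u,w_i\}\in E(G)$, then uses the $h,w_i$-links (which must pass through $u$, since $d(uw_i)=1$ by Lemma~\ref{lemma degree 4 double neighbors}) to produce edges $\{v,w_1,z_1\},\{v,w_2,z_2\}$, and finally deduces $z_1=z_2=z$ where $\{h,v,z\}$ is the second $\{h,v\}$-edge. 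This gives $d(vz)\geq 3$, and then Claim~\ref{claim degree d d neighbor} settles everything: if $d(z)=3$ then $d(v)\geq 5$, in which case $\mathrm{reas}(v)\geq 1$ and $\mathrm{flat}(v)\leq d(v)-2$ already give~\eqref{inequality helpful definition}; otherwise $d(z)\geq 4$, so $\{h,v,z\}$ is not needy and contributes. Your inequality $d(v)+\mathrm{rich}(v)+\tfrac12\mathrm{reas}(v)\geq d(v)+1$ relies on $e'$ being non-needy, which fails exactly when $d(v)=4$ and $z$ is a $1$-flat $3$-vertex; without the codegree structure $d(vz)\geq 3$ you cannot exclude that pair of possibilities simultaneously.

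Secondary issues: your treatment of the supported $2$-vertex case with $h\notin\{h_1,h_2\}$ is left to an unspecified ``saturation argument'' and does not go through any of the prepared lemmas ($w$ would be $3$-flat and have codegree $1$ with each of $h,h_1,h_2$, so it does not land in $R_3$ or $R_4$), and the $1$-flat $3$-vertex subcase is likewise only gestured at. These are not small bookkeeping tasks---the reason the paper does not attempt to eliminate these vertex types is that the elimination is false in one case and the correct route is the constructive identification of $z$. Your use of Lemma~\ref{2flat2} to handle the unsupported $2$-vertex subcase is valid and is a neat shortcut for that one piece, but it does not rescue the rest of the argument.
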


\begin{proof}
    Let $\{h,v,z\}$ be the second edge in $G$ that contains $h$ and $v$, and let $\{h,u,w_1\}$ and $\{h,u,w_2\}$ be the remaining edges of $G$ that contain the rich pair $hu$, and note that $w_1$ and $w_2$ must be $1$-flat $2$-vertices. By Lemma~\ref{lemma degree 4 double neighbors}, $d(uw_1)=d(uw_2)=1$. If at least one of $hw_1$ and $hw_2$ is a bad pair, then some of $w_1$ and $w_2$ is in
    $\mathbf{R}'_2$. In this case, $u\in \mathbf{R}'_3$ and hence $v\in \mathbf{R}'_4$, a contradiction. Thus $hw_1$ and $hw_2$
    are good pairs. Since Lemma~\ref{lemma no (L,2,2) edges} implies that $w_1$ and $w_2$ are not adjacent, and any  $h,w_1$-link must contain $u$ to avoid a $C_3^{(3)}$ in $G$ with the edge $\{h,u,w_1\}$, any  $h,w_1$-link must use the edge $\{h,u,v\}$ and an edge $\{v,w_1,z_1\}$. Similarly, 
    any $h,w_2$-link must use the edge $\{h,u,v\}$ and an edge $\{v,w_2,z_2\}$. We claim that $z=z_1=z_2$. If $z\neq z_1$, then $\{h,v,z\}$, $\{h,u,w_1\}$ and $\{v,w_1,z_1\}$ for a $C_3^{(3)}$ in $G$, a contradiction. Similarly if $z\neq z_2$, then $\{h,v,z\}$, $\{h,u,w_2\}$ and $\{v,w_2,z_2\}$ form a $C_3^{(3)}$ in $G$, again a contradiction. Thus, $z=z_1=z_2$. This implies that $d(z)\geq 3$. We will consider cases based on $d(z)$.
    
    \textbf{Case 1:} $d(z)\geq 4$. Then $\{h,v,z\}$ is either reasonable or rich with recipient $v$. Along with the reasonable edge $\{h,u,v\}$, this gives us that $\frac{1}{2}\mathrm{reas}(v)+\mathrm{rich}(v)\geq 1$, while $\mathrm{flat}(v)\leq d(v)-2$, which along with $d(v)\geq 4$ is sufficient to satisfy \eqref{inequality helpful definition}, so $v$ is helpful.
    
    \textbf{Case 2:} $d(z)=3$. Then $d(vz)=d(z)$, so by Lemma~\ref{lemma few i vertices that are i-neighbors of h}, $d(z)\geq 5$. Since $\{h,v,z\}$ does not contain a $1$-flat $2$-vertex and $\{h,u,z\}$ is reasonable,  $\mathrm{flat}(v)\leq d(v)-2$, while $\mathrm{reas}(v)\geq 1$ and $d(v)\geq 5$, which implies \eqref{inequality helpful definition}, so $v$ is helpful.
\end{proof}
	
\begin{lemma}\label{lemma 2flat 5vertices are helpful}
    All   $2^+$-flat $5$-vertices outside of $\mathbf{R}'_9$ are helpful.
\end{lemma}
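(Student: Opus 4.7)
The plan is to verify the helpfulness inequality
\[
d(v)+2\mathrm{supp}(v)+\mathrm{rich}(v)+\tfrac12\mathrm{reas}(v)\geq \tfrac12\mathrm{flat}(v)+4.
\]
With $d(v)=5$, Lemma~\ref{claim supported or high degree implies helpful} lets us assume $v$ is unsupported, reducing the inequality to $2\mathrm{rich}(v)+\mathrm{reas}(v)\geq \mathrm{flat}(v)-2$. Lemma~\ref{lemma rich vertices are helpful} further lets us assume $d(hv)\leq 2$ for every $h\in M$, and Claim~\ref{claim exceptional vertices are helpful} lets us assume $v$ is not the exception in any exceptional edge. Under these reductions, a routine check of the rich-recipient conditions shows each $(M,L,L)$-edge $\{h,v,u\}$ at $v$ is exactly one of: needy with recipient $u$ (iff $u$ is a $2$-vertex or a $1$-flat $3$-vertex), rich with recipient $v$, or reasonable.

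The central structural observation is that any edge $\{h,v,t\}$ with $h\in M$ and $t$ a $1$-flat $2$-vertex must satisfy $h\in M\setminus H$ and $d(hv)=1$. For $h\in H$, Lemma~\ref{lemma 1flat 2 vertex implies rich}---whose hypothesis $t\notin\mathbf{R}'_8$ follows from $\mathbf{R}'_8\subseteq\mathbf{R}_9$ combined with $v\notin\mathbf{R}'_9$ and $d(t)=2\leq 8$---would force $d(hv)\geq 3$, contradicting our reduction. For $d(hv)=2$, Lemma~\ref{2flat2} applied with $u=v$ (using $d(v)=5\leq 8$ and $v,t$ unsupported) would place $\{v,t\}\subseteq\mathbf{R}'_9$, again a contradiction. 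The same use of Lemma~\ref{2flat2} shows that if some $h\in M$ has $d(hv)=2$, then neither edge $\{h,v,w\}$ has $w$ an unsupported $2$-vertex.

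The proof then splits based on how Lemma~\ref{lemma x and y in H} realizes the hypothesis $v\notin\mathbf{R}'_1$. In the sub-case $v\in Q$ (so $v$ has at least two $H$-neighbors), the edges of $v$ through these $H$-neighbors contain no $1$-flat $2$-vertex by the above, yielding $\mathrm{flat}(v)\leq 3$; to secure the required $2\mathrm{rich}(v)+\mathrm{reas}(v)\geq 1$ we apply Lemma~\ref{coneighbors} case~(a) to any double $H$-neighbor, and otherwise inspect the third vertex of each single-codegree $H$-edge using $v\notin\mathbf{R}'_3$ and Lemma~\ref{2flat2} to conclude that at least one such edge is not needy. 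In the complementary sub-case, where $v$ has at most one $H$-neighbor, a longer case analysis combining Lemma~\ref{coneighbors}, the $(L,L,v)$-edge restrictions from Lemmas~\ref{lemma no (L,2,2) edges} and~\ref{lemma low edge containing 1flat 2vtx contains 4+vertex}, and Lemma~\ref{lemma degree 4 double neighbors} is needed to bound the $M$- and $L$-type contributions to $\mathrm{flat}(v)$ separately and to extract the requisite rich or reasonable edge.

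The main obstacle is the extremal configuration in which $v$ has only a single $H$-neighbor and the remaining $M$-incidences all come from medium vertices producing edges with $1$-flat $2$-vertex companions, so that $\mathrm{flat}(v)$ approaches $5$. Ruling this out requires combining the good-pair structure inherited from $v\notin\mathbf{R}'_2$ (which forces each such medium $h$ and its $1$-flat $2$-vertex $t$ to be connected by a $2$-edge link) with the codegree bound from $v\notin\mathbf{R}'_4$ and a direct triangle-completion argument at an appropriate non-edge of $G$, yielding the final contradiction with $v\notin\mathbf{R}'_9$.
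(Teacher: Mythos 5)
You make the same reductions as the paper (unsupported, non-rich, non-exceptional), but your ``central structural observation'' is weaker than what the paper extracts, and this creates a phantom obstacle. You treat Lemma~\ref{lemma 1flat 2 vertex implies rich} as applying only when $h\in H$, and so allow non-low edges $\{h,v,t\}$ with $h\in M\setminus H$, $t$ a $1$-flat $2$-vertex and $d(hv)=1$. But the proof of that lemma never uses $h\in H$: the good pair $ht$ comes from $t\notin\mathbf{R}'_2$ together with $t$ being $1$-flat (so $h$ is $t$'s unique $M$-neighbor), and the remaining case analysis on $d(ut)$ is indifferent to $d(h)$. The paper applies it with $h\in M$ to get the stronger conclusion that no non-low edge at a non-rich $v$ can contain a $1$-flat $2$-vertex at all. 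This immediately restricts $\mathrm{flat}(v)$ to low edges, so a $3^+$-flat $v$ is already helpful, and a $2$-flat $v$ has $\mathrm{flat}(v)\le 3$ with $\mathrm{rich}(v)=\mathrm{reas}(v)=0$, after which the paper splits cleanly on whether $v$ has one $M$-neighbor of codegree $2$ or two of codegree $1$.

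Because you miss this, you are left fighting ``the extremal configuration in which $v$ has only a single $H$-neighbor and the remaining $M$-incidences all come from medium vertices producing edges with $1$-flat $2$-vertex companions'' --- a configuration which in fact cannot occur. The steps meant to close your sub-cases (``a routine check,'' ``a longer case analysis combining \ldots is needed,'' ``Ruling this out requires combining \ldots yielding the final contradiction'') are announced rather than carried out, so as written there is a genuine gap: it is not demonstrated that the $Q$ versus $N(\{x,y\})$ split yields $\mathrm{rich}(v)+\tfrac12\mathrm{reas}(v)\ge\tfrac12\mathrm{flat}(v)-1$ in both sub-cases. Strengthening the structural observation to the full version (which costs nothing, since the cited lemma's proof already delivers it) lets you skip the hard sub-case and follow the paper's codegree split, with the contradictions supplied by Lemmas~\ref{2flat2}, \ref{lemma few bad pairs with low degrees in G-h}, \ref{Daras} and Claim~\ref{claim good pair double neighbor} together with $v\notin\mathbf{R}'_4$.
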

	
\begin{proof}  Suppose there is   a   $2^+$-flat $5$-vertex $u\notin \mathbf{R}'_9$ that is not helpful. By Lemma~\ref{claim supported or high degree implies helpful}, $u$ is unsupported. 
By Lemma~\ref{lemma rich vertices are helpful} and Claim~\ref{claim exceptional vertices are helpful}, we can assume $u$ is not rich or exceptional. Furthermore, by Lemma~\ref{lemma 1flat 2 vertex implies rich}, any non-low edge containing $u$ must not contain a $1$-flat $2$-vertex, since this would imply $u$ is rich. Thus, if $u$ is $3^+$-flat, then $\mathrm{flat}(u)\leq 2$, and \eqref{inequality helpful definition} is satisfied. Thus, we may assume $u$ is $2$-flat. Furthermore, since $\mathrm{flat}(u)\leq 3$, if $u$ is in at least one reasonable edge or a rich edge with recipient $u$, then \eqref{inequality helpful definition} is satisfied, so we must have $\mathrm{reas}(u)=\mathrm{rich}(u)=0$. We will consider cases based on if $u$ has one or two neighbors in $M$.
		
{\bf Case 1:} There exists a vertex $h\in M$ such that $d(uh)=2$. Let $\{h,u,v_1\}$ and $\{h,u,v_2\}$ be edges of $G$. Note that neither $v_1$ nor $v_2$ are $1$-flat $2$-vertices, and furthermore by Lemma~\ref{2flat2}, $v_1$ and $v_2$ are not $2$-flat $2$-vertices. Then since $\mathrm{reas}(u)=\mathrm{rich}(u)=0$, $v_1$ and $v_2$ must be $1$-flat $3$-vertices. Note that if $\{u,v_1,v_2\}$ is an edge, then $\mathrm{flat}(u)\leq 2$, and hence $u$ is helpful, so $\{u,v_1,v_2\}\not\in E(G)$. 

 Since $u$ is $2$-flat, by the case, $h$ is the only non-low neighbor of $u$, so by Lemma~\ref{lemma few bad pairs with low degrees in G-h} $hu$ must be a good pair. Any $h,u$-link $L$ must contain both $v_1$ and $v_2$, but $\{v_1,v_2,u\}$ is not an edge of $G$, so the edge in $L$
 that contains $h$ must contain  $v_1$ or $v_2$, contradicting the fact that they both are $1$-flat.

\textbf{Case 2:} There exist distinct vertices $h,h'\in M$ such that $d(hu)=d(h'u)=1$. Let $\{h,u,v_1\}$ and $\{h',u,v_2\}$ be edges in $G$, noting that we may have $v_1=v_2$. Since $u\notin \mathbf{R}'_3$,
 $v_1$ cannot be $1$-flat, and so $v_1$ is a $2$-flat $2$-vertex. Similarly, $v_2$ is a $2$-flat $2$-vertex.

\textbf{Case 2.1:} $v_1=v_2$. Let $\{u,t_1,w_1\}$, $\{u,t_2,w_2\}$ and $\{u,t_3,w_3\}$ be the low edges containing $u$ with $t_1$, $t_2$ and $t_3$ being $1$-flat $2$-vertices. Note that while all $t_i$ are distinct, the $w_i$'s do not need be. First consider if $d(uw_i)=1$ for some $i\in\{1,2,3\}$. In this case, consider the non-edge $\{v_1,u,t_i\}$. It intersects all edges containing $v_1$ in two vertices, so 
there is a $u,t_i$-link . Let $\{u,t_{i'},w_{i'}\}$ be the edge of this link containing $u$, and note that the second edge must be of the form $\{t_i,w_{i'},z\}$ since $t_i$ and $t_{i'}$ are not adjacent by Lemma~\ref{lemma no (L,2,2) edges} and
the fact that $t_i$ and $t_{i'}$ are not in $\mathbf{R}'_5$.
%Corollary~\ref{remM22}.
However, in this case, $\{u,t_i,w_i\}$, $\{u,t_{i'},w_{i'}\}$ and $\{t_i,w_{i'},z\}$ form a $C_3^{(3)}$ in $G$, where all these vertices are distinct because $d(uw_i)=1$, and $z$ must actually be a vertex in $M$. Thus, we cannot have $d(uw_i)=1$ for any $i\in\{1,2,3\}$. This implies that  $w_1=w_2=w_3$. In this case,  consider the non-edge $\{v_1,u,w_1\}$. It intersects every edge containing  $v_1$ or $u$ in at least two vertices, so adding this non-edge cannot create a $C_3^{(3)}$ in $G$, a contradiction.

%STOP HERE.

\textbf{Case 2.2:} $v_1\neq v_2$. In this case, $d(uv_1)=d(hv_1)=1$, so by Claim~\ref{claim good pair double neighbor}, $hu$ and $hv_1$ are bad pairs. Then since $u\notin \mathbf{R}'_5$,
 there  exist vertices $a,b\in V(G)$ such that $\{u,a,b\}$ and $\{v_1,a,b\}$ are edges in $G$. 
 Since $u$ is $2$-flat and  $v_1$ is a $2$-flat $2$-vertex, $\{u,a,b\}$ must be the edge $\{h',u,v_2\}$. But then  $d(h'v_2)=2=d(v_2)$, which contradicts the fact that $u\notin \mathbf{R}'_4$.
\end{proof}

\begin{lemma}\label{lemma 2flat 4vertices are helpful}
    All   $2^+$-flat $4$-vertices outside of $\mathbf{R}'_9$ are helpful.
   %  All non-special  $2^+$-flat $4$-vertices are helpful.
\end{lemma}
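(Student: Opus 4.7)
The proof will closely follow the strategy and arguments of Lemma~\ref{lemma 2flat 5vertices are helpful}, specialized to $d(u)=4$. Assume for contradiction that $u\in L\setminus\mathbf{R}'_9$ is a $2^+$-flat $4$-vertex that is not helpful. For $d(u)=4$ inequality~\eqref{inequality helpful definition} is equivalent to $4\mathrm{supp}(u)+2\mathrm{rich}(u)+\mathrm{reas}(u)\geq\mathrm{flat}(u)$, so $u$ not helpful means $4\mathrm{supp}(u)+2\mathrm{rich}(u)+\mathrm{reas}(u)<\mathrm{flat}(u)$. Lemma~\ref{claim supported or high degree implies helpful} makes $u$ unsupported, Lemma~\ref{lemma rich vertices are helpful} makes $u$ not rich, and Claim~\ref{claim exceptional vertices are helpful} rules out $u$ being the exception in any exceptional edge. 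Lemma~\ref{lemma 1flat 2 vertex implies rich} then gives that no non-low edge at $u$ contains a $1$-flat $2$-vertex. Since $u$ is unsupported, each edge of $u$ has at most one $M$-vertex, so $u$ being $i$-flat forces $u$ into exactly $i$ non-low edges and $4-i$ low edges, and $\mathrm{flat}(u)\leq 4-i$.

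For $i\geq 3$ the inequality is easy. When $i=4$, $\mathrm{flat}(u)=0$. When $i=3$, $\mathrm{flat}(u)\leq 1$ and it suffices to exhibit one reasonable edge among the three non-low edges $\{h_j,u,w_j\}$ (each with $d(h_ju)=1$). If every such edge were non-reasonable, then each $w_j$ would be a $2$-vertex or a $1$-flat $3$-vertex (since $u$ is not rich), and I will rule this out using $u\notin\mathbf{R}'_3$ (no $1$-flat $w_j$ of degree at most $8$, via the definition of $R_3$), $u\notin\mathbf{R}'_4$ (bounds on $d(h_jw_j)$ when $w_j$ is a $2$-vertex), and Lemma~\ref{2flat2}.

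The bulk of the work is the $i=2$ case, where $u$ has exactly two non-low and two low edges and I must contradict $\mathrm{reas}(u)<\mathrm{flat}(u)\leq 2$. I split into Case A, where both non-low edges share a common $h\in M$ with $d(hu)=2$, giving $\{h,u,v_1\}$ and $\{h,u,v_2\}$; and Case B, where the two non-low edges use distinct $h,h'\in M$ with $d(hu)=d(h'u)=1$. In Case A, Lemma~\ref{2flat2} (using $u\notin\mathbf{R}'_9$) forbids $v_i$ from being an unsupported $2$-vertex, Lemma~\ref{lemma 1flat 2 vertex implies rich} forbids $1$-flat $2$-vertices, and Lemma~\ref{coneighbors} (valid since $d(hu)\geq 2$ and $u\notin\mathbf{R}'_4$) guarantees either one of the $v_i$ has $d(v_i)\geq 4$ or neither is a $1$-flat $2$-vertex; a short bookkeeping on the remaining possibilities yields enough reasonable edges. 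In Case B, $u\notin\mathbf{R}'_3$ forces each $v_i$ to be either a $2$-flat $2$-vertex or to have $d(v_i)\geq 3$ and not be $1$-flat, and whenever some $v_i$ has degree $\geq 3$ and is not a $1$-flat $3$-vertex, the corresponding non-low edge is reasonable, closing that subcase.

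The expected main obstacle is the remaining subcase of Case B in which both $v_1$ and $v_2$ are $2$-flat $2$-vertices, paralleling Cases 2.1 and 2.2 in the proof of Lemma~\ref{lemma 2flat 5vertices are helpful}. When $v_1=v_2$, I will analyze the two low edges at $u$ (each of which must contain a $1$-flat $2$-vertex by the assumption on $\mathrm{flat}(u)$), use Lemma~\ref{lemma no (L,2,2) edges} and Claim~\ref{claim degree d d neighbor} to extract common-neighbor structure, and add a carefully chosen non-edge to produce a $C_3^{(3)}$ that is forbidden by $u$'s known adjacencies. When $v_1\neq v_2$, Lemma~\ref{Daras} (valid since $u\notin\mathbf{R}'_5$) produces a pair $\{a,b\}$ with $\{u,a,b\},\{v_1,a,b\}\in E(G)$; since all four edges at $u$ are already identified, $\{u,a,b\}$ must coincide with $\{h',u,v_2\}$, giving $d(h'v_2)=d(v_2)=2$ and contradicting $u\notin\mathbf{R}'_4$. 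The triangle-building in the $v_1=v_2$ subcase is the most technically involved part of the argument.
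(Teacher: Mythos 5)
Your high-level plan matches the paper's: reduce via Lemmas~\ref{claim supported or high degree implies helpful}, \ref{lemma rich vertices are helpful}, Claim~\ref{claim exceptional vertices are helpful} to $u$ unsupported, not rich, not exceptional, then case on the flatness of $u$ and the codegree pattern with $M$. Your finish for the $v_1\neq v_2$ sub-subcase (reading off $d(h'v_2)=d(v_2)=2$ and deriving $v_2\in\mathbf{R}_4$, hence $u\in\mathbf{R}'_4$) is a valid and in fact somewhat cleaner alternative to the paper's appeal to Lemma~\ref{2flat2}. However, two of your reductions do not actually close the cases you claim they close.

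First, in the $3$-flat case your parenthetical ``(each with $d(h_j u)=1$)'' is unjustified: being $3$-flat and unsupported only forces three non-low edges, not that the three $M$-vertices be distinct. (Two of them can coincide, giving $d(hu)=2$; the paper handles this by first disposing of the case $d(hu)=3$ via richness and then isolating \emph{one} codegree-$1$ non-low edge.) More importantly, your listed tools do not rule out the sub-subcase in which the remaining low neighbour is a $2$-flat $2$-vertex: Lemma~\ref{2flat2} needs $d(hu)=2$, which is exactly what you do not have, while $\mathbf{R}'_3$ only kills $1$-flat neighbours and $\mathbf{R}'_4$ only kills codegree-equals-degree configurations at $u$ itself. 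The paper's argument here goes through Claim~\ref{claim good pair double neighbor} to show $h'u,h'z$ are bad pairs and then invokes $u\notin\mathbf{R}'_5$ to produce the common edges $\{u,a,b\},\{z,a,b\}$; your sketch omits this essential step.

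Second, and more seriously, in Case~B you assert that once some $v_i$ has $d(v_i)\geq 3$ and the corresponding edge is reasonable ``that subcase is closed.'' This is false arithmetic: for a $4$-vertex, $u$ not helpful means $\mathrm{rich}(u)+\tfrac12\mathrm{reas}(u)<\tfrac12\mathrm{flat}(u)$, which is perfectly compatible with $\mathrm{reas}(u)=1$, $\mathrm{rich}(u)=0$, $\mathrm{flat}(u)=2$. One reasonable edge buys you $\tfrac12$, but you may need $1$. The paper's Case~4 correctly requires \emph{both} non-low edges to be reasonable (or one rich with recipient $u$), and when exactly one of $z,z'$ is a $2$-flat $2$-vertex it runs the $\mathbf{R}'_5$/Daras argument to either upgrade the other non-low edge to a rich edge with recipient $u$ or derive a contradiction; it does not get to assume both $z,z'$ are $2$-flat $2$-vertices. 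Your $v_1=v_2$ subcase is also only described in outline (and Case~A is similarly light), so even granting those sketches, the two gaps above mean the proof as proposed does not go through.
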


\begin{proof}
    Suppose there is   a   $2^+$-flat $4$-vertex $u\notin \mathbf{R}'_9$ that is not helpful. By Lemma~\ref{claim supported or high degree implies helpful}, $u$ is unsupported. 
    By Lemma~\ref{lemma rich vertices are helpful} and Claim~\ref{claim exceptional vertices are helpful}, we are done if $u$ is rich or exceptional, so we may assume $u$ is neither. Let $h\in M$ be a neighbor of $u$.
    
		\textbf{Case 1:} $u$ is $4$-flat. In this case, $u$ is only in high edges, so if $u$ has a $1$-flat  $2$-neighbor, $u$ is rich by Lemma~\ref{lemma 1flat 2 vertex implies rich},  contradicting our earlier assumption. Thus $u$ has no $1$-flat $2$-neighbors, and is trivially helpful.
		
		\textbf{Case 2:} $u$ is $3$-flat. Since $u$ is not special, it has a neighbor $h\in H$. If $h$ is the only non-low neighbor of $u$, then $hu$ is a rich pair, and we are done by Lemma~\ref{lemma rich vertices are helpful}.
		 Thus assume $u$ is adjacent to at least two vertices in $M$, and consequently $d(h'u)=1$ for some $h'\in M$ (possibly $h=h'$), say $\{h',u,z\}$ is an edge for some $z\in L$. Furthermore, none of the non-low edges containing $u$ can contain a $1$-flat $2$-vertex, so  only the single low edge containing $u$ may contain a $1$-flat $2$-vertex. Thus, since $u$ is not helpful,
		  $\mathrm{flat}(u)=1$ and $\mathrm{rich}(u)=\mathrm{reas}(u)=0$. This implies that $z$ is either a $2$-vertex or a $1$-flat $3$ vertex, but since $u\notin \mathbf{R}'_3$,  $z$ is not $1$-flat. So $z$ is a $2$-flat $2$-vertex. By Claim~\ref{2flat2} $d(uz)=1$, and by Claim~\ref{claim degree d d neighbor} $d(h'z)=1$, so by Claim~\ref{claim good pair double neighbor}, $h'u$ and $h'z$ are bad pairs. Thus since $u\notin \mathbf{R}'_5$, there exist vertices $a$ and $b$ such that $\{u,a,b\}$ and $\{z,a,b\}$ are edges. Since $z$ is $2$-flat,  one of $a$ and  $b$ is in $M$, say $a\in M$. Then $d(b)\geq 3$ by Claim~\ref{claim degree d d neighbor} since $d(ab)\geq 2$, and thus  edge $\{u,a,b\}$ is either reasonable or rich with recipient $u$. In  both cases we get a contradiction with the fact that $\mathrm{rich}(u)=\mathrm{reas}(u)=0$.

		\textbf{Case 3:} $u$ is $2$-flat and $d(hu)=2$. Let $\{h,u,v_1\}$ and $\{h,u,v_2\}$ be the edges of $G$ containing $h$ and $u$.
		
		\textbf{Case 3.1:} $\{h,v_1,v_2\}\in E(G)$. Then neither $v_1$ nor $v_2$ is $1$-flat. Moreover, 
		since $u\notin \mathbf{R}'_4$,
		 $d(v_1),d(v_2)\geq 3$. Thus, the edges $\{h,u,v_1\}$ and $\{h,u,v_2\}$ are either reasonable or rich with recipient $u$, so  we  have $\mathrm{rich}(u)+\frac{1}2\mathrm{reas}(u)\geq \frac{1}2\mathrm{flat}(u)$. Then \eqref{inequality helpful definition} holds and $u$ is helpful. Thus, below we  assume $\{h,v_1,v_2\}\not\in E(G)$. 
		%We will consider subcases based on if $\{u,v_1,v_2\}$ is an edge or not.
		
		%STOP HERE.
		
		\textbf{Case 3.2:}  $\{u,v_1,v_2\}\in E(G)$. Since $u$ is unsupported,  by Lemma~\ref{lemma degree 4 double neighbors} neither $v_1$ nor $v_2$  has degree $2$. 
		%We further claim that at least one of $v_1$ or $v_2$ is not a $1$-flat $3$-vertex. Indeed, if
		
		Suppose both $v_1$ and $v_2$ are  $1$-flat $3$-vertices. Since $u\notin \mathbf{R}'_4$,
	  $d(uv_1)< d(v_1)=3$ and $d(uv_2)< d(v_2)=3$. Hence $d(uv_1)=d(uv_2)= 2$.
		 Since $u\notin \mathbf{R}'_2$,
		 $hv_1$ is a good pair, and any  
		 $h,v_1$-link needs to contain $u$ to avoid a $C_3^{(3)}$ in $G$. The only edges containing $u$ that could be in this link are $\{h,u,v_2\}$ and $\{u,v_1,v_2\}$. Edge $\{u,v_1,v_2\}$ cannot be there since neither  $u$ nor $v_2$ is in an edge with $h$ that contains only one of these vertices, and if $\{h,u,v_2\}$ is in this link, then the other edge of the link must be $\{v_1,v_2,z\}$ for some $z\not\in \{h,u,v_1,v_2\}$. But then the non-edge $\{h,v_1,v_2\}$ intersects all edges containing either $v_1$ or $v_2$ in at least two vertices, so neither can be a core vertex in the $C_3^{(3)}$ in $G+\{h,v_1,v_2\}$, a contradiction. Thus, at least one of $v_1$ or $v_2$ is not a $1$-flat $3$-vertex, say $v_1$ is not. 
		 
		 Then $\{h,v_1,u\}$ is either reasonable or a rich edge with recipient $u$, and $\mathrm{flat}(u)\leq 1$ since $\{h,v_1,u\}$, $\{h,v_2,u\}$ and $\{u,v_1,v_2\}$ all do not contain a $1$-flat $2$-vertex. Thus $u$ is helpful since $\mathrm{rich}(u)+\frac{1}{2}\mathrm{reas}(u)\geq \frac{1}{2}\mathrm{flat}(u)$, i.e., \eqref{inequality helpful definition} is satisfied.
		
		\textbf{Case 3.2:} $\{u,v_1,v_2\}\not\in E(G)$. Since $u\notin \mathbf{R}'_2$,
		 $hu$ is a good pair. In particular, there is an $h,u$-link, say with edges $e_1$ and $e_2$, where $h\in e_1$ and $u\in e_2$. To avoid a $C_3^{(3)}$ in $G$ with $\{h,u,v_1\}$ and $\{h,u,v_2\}$, $v_1$ and $v_2$ must both be in this link. Since $\{h,v_1,v_2\}, \{u,v_1,v_2\}\not\in E(G)$, we can assume without loss of generality that $e_1=\{h,v_1,z\}$ and $e_2=\{z,v_2,u\}$ for some $z\in V(G)\setminus\{h,u,v_1,v_2\}$. 
		
		If $z$ is a $1$-flat $2$-vertex, consider the non-edge $\{h,u,z\}$. Since this non-edge intersects all edges containing $z$ in two vertices, there is an $h,u$-link  that avoids $z$. Again, this link must contain $v_1$ and $v_2$ to avoid a $C_3^{(3)}$ in $G$. If this link has edges $\{h,v_1,z'\}$ and $\{z',v_2,u\}$ for $z'\neq z$, then $d(hv_1)\geq 3$ so $v_1$ is not a $2$-vertex or $1$-flat $3$-vertex, and $d(uv_2)\geq 3$. So by Claim~\ref{claim degree d d neighbor}, $d(v_2)\geq 4$ since $d(u)=4$, and thus the edges $\{h,u,v_1\}$ and $\{h,u,v_2\}$ are either rich with recipient $u$ or reasonable. The other possible $h,u$-link  that avoids $z$ is a path with edges $\{h,v_2,z'\}$ and $\{z',u,v_1\}$ for some $z'\neq z$. In this case, $d(hv_1)\geq 2$ and $d(hv_2)\geq 2$, so since $u\notin \mathbf{R}'_4$,
		 $v_1$ and $v_2$ are not $2$-vertices. Since they also are not $1$-flat, $\{h,v_1,u\}$ and $\{h,v_2,u\}$ are either rich with recipient $u$ or reasonable. Furthermore, in either of these two cases, $\mathrm{flat}(u)\leq 2$, so $u$ is helpful since $\mathrm{rich}(u)+\frac{1}{2}\mathrm{reas}(u)\geq \frac{1}{2}\mathrm{flat}(u)$ implies \eqref{inequality helpful definition}. Thus, we are done if $z$ is a $1$-flat $2$-vertex. 
		
		If $z$ is not a $1$-flat $2$-vertex, then $\mathrm{flat}(u)\leq 1$, and since $d(hv_1)\geq 2$ and  $u\notin \mathbf{R}'_4$,
		 $v_1$ is not a $2$-vertex and  not $1$-flat. So $\{h,v_1,u\}$ is either rich with recipient $u$ or reasonable. Therefore,  again $u$ is helpful since $\mathrm{rich}(u)+\frac{1}{2}\mathrm{reas}(u)\geq \frac{1}{2}\mathrm{flat}(u)$.
		
		\textbf{Case 4:} $u$ is $2$-flat and not a double neighbor 
		with any vertices in $M$. Let $h,h'\in M$ be neighbors of $u$ and let  $\{h,u,z\}$ and $\{h',u,z'\}$ be edges in $G$. If either of these edges are rich with recipient $u$ or if both of them are reasonable, then $\mathrm{rich}(u)+\frac{1}{2}\mathrm{reas}(u)\geq \frac{1}{2}\mathrm{flat}(u)$, and so $u$ is helpful. Thus we may assume at least one of $z$ or $z'$ is either a $2$-vertex or a $1$-flat $3$-vertex, assume $z$ is such a vertex. Since $d(hu)=1$
		and since $u\notin \mathbf{R}'_3$,
	 $z$ is not $1$-flat, and thus must be a $2$-flat $2$-vertex.
		
		\textbf{Case 4.1:} $d(uz)=2$. For $u$ to not be helpful, we must have $\mathrm{flat}(u)\geq 1$. Let $\{u,t,w\}$ be a low edge containing $u$ with $t$ a $1$-flat $2$-vertex, and let $\{u,a_1,a_2\}$ be the other low edge containing $u$. Note that $t,w\not\in \{h,h',z\}$, and by Lemma~\ref{lemma degree 4 double neighbors}, $t\not\in \{a_1,a_2\}$. Consider the non-edge $\{u,t,z\}$. Since this non-edge intersects every edge containing $z$ in two vertices, 
		there a $u,t$-link. The only edge containing $u$ that can be in this link is $\{u,a_1,a_2\}$. So the other edge must connect $a_1$ or $a_2$ to $t$, say this edge is $\{t,a_1,h''\}$ (note that $h''\in M$ since $t$ is adjacent to some vertex in $M$), but then we may assume $w\in \{a_1,a_2\}$ since otherwise $\{u,t,w\}$, $\{u,a_1,a_2\}$ and $\{t,a_1,h''\}$ form a $C_3^{(3)}$ in $G$. Then consider the non-edge $\{u,t,w\}$, and note that this non-edge intersects every edge containing either $u$ or $t$ in two vertices, so adding $\{u,t,w\}$ to $G$ does not create a $C_3^{(3)}$, a contradiction.

		\textbf{Case 4.2:} $d(uz)=1$. Since $u\notin \mathbf{R}'_4$, $d(zh)=1$, so by Claim~\ref{claim good pair double neighbor}, $hu$ and $hz$ are bad pairs, and thus since $u\notin \mathbf{R}'_5$,
		 there exist vertices $a,b\in V(G)$ such that $\{u,a,b\}$ and $\{z,a,b\}$ are edges of $G$. Since $u$ is $2$-flat and $z$ is a $2$-flat $2$-vertex, $\{u,a,b\}$ must be the edge $\{h',u,z'\}$. This implies that $d(h'z)\geq 2$. If $d(h'z)\geq 3$ or if $d(z)\geq 8$, then $\mathrm{rich}(u)\geq 1$, and $\mathrm{flat}(u)\leq 2$, so \eqref{inequality helpful definition} holds. On the other hand, if $d(h'z)=2$ and $d(z)\leq 7$, then this contradicts Lemma~\ref{2flat2}.
\end{proof}

Let $R_{10}$ be the set of $1$-flat $2$-vertices $t$ such that the low edge $\{u,v,t\}$ containing $t$
satisfies the following:
\begin{enumerate}[label=(\alph*)]
\item none of $u,v$ or $t$ is in $\mathbf{R}'_9$;
\item for some $4\leq k\leq 5$, $u$ is a $1$-flat unsupported $k$-vertex;
\item $v$ is not helpful;
\item the high vertex $h_1$ adjacent to $t$ is distinct from the high vertex $h_2$ adjacent to $u$.
\end{enumerate}

Let $\mathbf{R}_{10}=R_{10}\cup \mathbf{R}_9' $, and  let 
	$\mathbf{R}= \mathbf{R}_{10}\cup (L\cap N(L_8\cap \mathbf{R}_{10})) $.

\begin{lemma}\label{lemma 1flat vertices adjacent to 1flat 2 vertices}
$|R_{10}|\leq n/\ell$.
\end{lemma}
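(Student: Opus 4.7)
The plan is to proceed by contradiction, assuming $|R_{10}|>n/\ell$. For each $t\in R_{10}$, both $t$ and $u_t$ lie outside $\mathbf{R}'_1$ (since $\mathbf{R}'_1\subseteq\mathbf{R}'_9$), so each has its unique high neighbor in $\{x,y\}$; by condition (d) these neighbors are distinct. After applying the symmetry between $x$ and $y$, we may restrict attention to a subfamily $R^*\subseteq R_{10}$ of size greater than $n/(2\ell)$ for which $h_1=x$ and $h_2=y$. For $t\in R^*$ we record the configuration $(a_t,u_t,v_t,r_t)$ determined by the edges $e_t=\{x,a_t,t\}$, $f_t=\{u_t,v_t,t\}$, and the unique high edge $g_t=\{y,u_t,r_t\}$ of $u_t$. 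Standard consequences from earlier sections---specifically Lemma~\ref{lemma no (L,2,2) edges}, Lemma~\ref{lemma degree 4 double neighbors}, and the definition of $R_9$---yield $d(tu_t)=d(tv_t)=1$ and $d(v_t)\ge 3$.

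The next step is to extract the structure of the non-helpful vertex $v_t$. Combining Lemma~\ref{claim supported or high degree implies helpful} (which settles the cases where $v_t$ is supported or $d(v_t)\ge 6$) with Lemmas~\ref{lemma 2flat 4vertices are helpful} and~\ref{lemma 2flat 5vertices are helpful} (which cover $2^+$-flat $4$- and $5$-vertices), we conclude that $v_t$ is unsupported with $d(v_t)\in\{3,4,5\}$, and that for $d(v_t)\in\{4,5\}$ the vertex $v_t$ is $1$-flat with its unique high neighbor lying in $\{x,y\}$. Similarly $u_t$ is a $1$-flat unsupported vertex of degree $4$ or $5$ whose only high neighbor is $y$, so its remaining neighbors are low of degree at most $\ell$. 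Consequently, the \emph{full configuration} of $t$---the vertex set $\{t,a_t,u_t,v_t,r_t\}$ together with the low second-neighborhoods of $u_t$ and $v_t$ in $G-\{x,y\}$---occupies only $O(\ell^2)$ vertices.

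Because $|R^*|>n/(2\ell)$ and each full configuration spans only $O(\ell^2)$ vertices, a greedy extraction produces a sufficiently large subfamily $R^{**}\subseteq R^*$ whose configurations are pairwise vertex-disjoint in $G-\{x,y\}$. For any two $t,t'\in R^{**}$, consider the non-edge $\{x,t,t'\}$: it is a non-edge because the only edge of $t$ containing $x$ is $e_t$ and disjointness forces $t'\ne a_t$. In the forced $C_3^{(3)}$ inside $G+\{x,t,t'\}$, the pair $\{t,t'\}$ cannot form the core pair (disjointness precludes a common neighbor outside $\{x,y\}$, and neither $t$ nor $t'$ lies in $N(y)$), while the pair $\{x,u_t\}$ is blocked since $u_t\notin N(x)$ (as $u_t$ is $1$-flat with sole high neighbor $y$); the only remaining option forces $v_t$ or $v_{t'}$ to lie in $N(x)$. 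Applying this to every pair in $R^{**}$ forces $v_t\in N(x)$ for all but one vertex of $R^{**}$. A symmetric argument using the non-edge $\{y,t,t'\}$ together with the high edge $g_t$ forces $v_t\in N(y)$ for all but one vertex of $R^{**}$.

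The combined conclusion places $v_t$ in the set of low vertices of degree at most $5$ that are adjacent to both $x$ and $y$; since $v_t$ is $1$-flat whenever $d(v_t)\ge 4$, this forces $d(v_t)=3$, with the three edges of $v_t$ being $\{x,v_t,\cdot\}$, $\{y,v_t,\cdot\}$, and $\{u_t,v_t,t\}$. Applying Lemma~\ref{lemma 1flat 2 vertex implies rich} (when the third vertex of one of $\{x,v_t,\cdot\}$, $\{y,v_t,\cdot\}$ is a $1$-flat $2$-vertex) or the rich/reasonable classification (otherwise) shows that such a $v_t$ in fact satisfies the helpfulness inequality~\eqref{inequality helpful definition}, contradicting condition (c) and completing the proof. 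The main obstacle will be the degree-$3$ case for $v_t$, where the margin in~\eqref{inequality helpful definition} is slim and the value of $\mathrm{flat}(v_t)$ depends delicately on the third vertices of the edges $\{x,v_t,\cdot\}$ and $\{y,v_t,\cdot\}$; a careful sub-case analysis mirroring those in Section~\ref{section 2 vertex}, combined with Lemma~\ref{coneighbors}, should close this case.
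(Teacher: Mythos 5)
Your proposal is structured around forcing $v_t\in N(x)\cap N(y)$ by adding the two non-edges $\{x,t,t'\}$ and $\{y,t,t'\}$ for far-apart $t,t'\in R^{**}$. The first half (forcing $v_t\in N(x)$) is sound and in fact parallels a step in the paper, which finds the edge $\{v,x,v_1\}$ via the $x,t$-link. But the claimed ``symmetric argument'' for $N(y)$ does not work, and this gap is fatal. The situation is not symmetric in $x$ and $y$: by condition (d) of $R_{10}$ the high neighbor of $u_t$ is $y$, so $u_t\in N(y)$ via the high edge $g_t=\{y,u_t,r_t\}$. Consequently the edges $f_t=\{u_t,v_t,t\}$ and $g_t$ already form a $y,t$-link (sharing only $u_t$, and $r_t\notin\{t',v_t,t\}$ by disjointness), so $G+\{y,t,t'\}$ \emph{always} contains a $C_3^{(3)}$ with core pair $\{y,t\}$, regardless of whether $v_t\in N(y)$. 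Adding $\{y,t,t'\}$ therefore yields no information about $v_t$. Indeed the paper's proof establishes the \emph{opposite} of what your argument needs: it shows $y\notin N(v)$ (using that $y\in N(v)$ would make $v$ helpful via the rich edge $g_2=\{v,x,v_1\}$ and the edge $e_6=\{y,v,u_1\}$), and then exploits that $vt$ is a bad pair, picking an $e'_1\in\mathcal{F}_1$ whose $\{v',t'\}$ lies at distance at least $3$ from $u$ so that $G+\{v',t',u\}$ has no triangle.

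There is a second, independent gap in your final step. Suppose you did know $d(v_t)=3$ and $v_t$ has one edge to $x$, one to $y$, and the low edge $\{u_t,v_t,t\}$. Then $\mathrm{supp}(v_t)=0$, $\mathrm{flat}(v_t)=1$, and the helpfulness inequality \eqref{inequality helpful definition} requires $\mathrm{rich}(v_t)+\tfrac12\mathrm{reas}(v_t)\geq \tfrac32$. You can argue (using $v_t\notin\mathbf{R}'_6$ and Corollary~\ref{remM22}) that neither $\{x,v_t,\cdot\}$ nor $\{y,v_t,\cdot\}$ is needy, but nothing in your sketch rules out that \emph{both} of these edges are merely reasonable rather than rich with recipient $v_t$, in which case $\mathrm{rich}(v_t)+\tfrac12\mathrm{reas}(v_t)=1<\tfrac32$ and $v_t$ is not helpful — so there is no contradiction with (c). To get the required rich edge you would need to identify a specific structural reason (the paper gets it from $v_1$ being rich or supported by Lemma~\ref{lemma 1flat 2 vertex implies rich}, which makes $g_2=\{v,x,v_1\}$ a rich edge with recipient $v$), and your sketch does not supply one.
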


\begin{proof} Let $\mathcal{F}$ be the set of low edges $\{u,v,t\}$ satisfying (a)--(d) for some $1$-flat $2$-vertex $t$. Let $\mathcal{F}_1$ be the set of edges in $\mathcal{F}$ where $h_1=x$ and $h_2=y$. If the lemma does not hold, then by the symmetry between $x$ and $y$, we may assume that  $|\mathcal{F}_1|\geq \frac{n}{2\ell}$. For $e_1=\{u,v,t\}\in\mathcal{F}_1$, let $e_2=\{x,t,v_1\}$ be the unique high edge containing $t$ and
$e_3=\{y,u,u_1\}$ be the unique high edge containing $u$. By Lemma~\ref{lemma 1flat 2 vertex implies rich},
\begin{equation}\label{rich-v1}
    \mbox{\em $v_1$ is rich or supported.}
\end{equation}

So if $v_1=v$, then $v$ is rich or supported, and thus helpful by either Lemma~\ref{lemma rich vertices are helpful} or Lemma~\ref{claim supported or high degree implies helpful}, contradicting (c). So, $v_1\neq v$. Since $x\notin N(u)$, $v_1\neq u$. If $v_1=u_1$, then since $v_1\neq v$, the edges $e_1,e_2$ and $e_3$ form a triangle, a contradiction. Thus, $v_1\notin \{v,u,u_1\}$.

Since $t\notin \mathbf{R}'_2$, $xt$ is a good pair. Let edges $g_1$ and $g_2$ form a $x,t$-link. Since $d(t)=2$, $g_1=e_1$. Since $x\notin N(u)$, $g_2$ has the form $\{v,x,w\}$ for some vertex $w$. If $w\neq v_1$, then  $e_1,e_2$ and $g_2$ form a triangle in $G$. Hence $w=v_1$. Since each of $e_2$ and $g_2$ contains only one vertex in $M$,~\eqref{rich-v1} yields that $d(v_1)\geq 3$.
 
If $v=u_1$, then $v$ is a $2^+$-flat vertex with $\mathrm{flat}(v)\leq d(v)-2$ (because of $e_3$ and $g_2$) and $\mathrm{rich}(v)\geq 1$ (by~\eqref{rich-v1}). Moreover, if $d(v)=3$, then $e_3$ is reasonable. This contradicts the condition that $v$ is not helpful. Thus $v\neq u_1$.

Suppose  pair $vt$ is good and edges $f_1$ and $f_2$ form a $v,t$-link. Since $d(t)=2$, $f_1=\{x,t,v_1\}$. In order for the edges $f_1,f_2$ and $e_1$ not to form a triangle, $u\in f_1\cup f_2$. Since $u\notin N(x)$,
$x\in f_2\setminus f_1$, which yields $f_2=\{x,v,v_1\}$. Now, consider adding the non-edge $e_5=\{u,t,v_1\}$. Since $e_5$ shares two vertices with each of the two edges containing $t$, $G$ must have edges $f_3$ and $f_4$ forming a $u,v_1$-link. In view of $f_2$, $v\in f_3\cup f_4$. Since $u\notin N(x)$, $f_4\neq g_2$. It follows that $d(v)\geq 4$ (because $v$ is contained in $e_1,f_2,g_2$ and at least one of $f_3$ and $f_4$), $\mathrm{flat}(v)\leq d(v)-2$ (because of $f_2$ and $g_2$), and $\mathrm{rich}(v)\geq 1$  (because of  $g_2$). So,
\begin{equation}\label{v help}
d(v)+\mathrm{rich}(v)-\frac{1}{2}\mathrm{flat}(v)-4\geq \frac{1}{2}d(v)+1+1-4\geq 0,
\end{equation}
contradicting (c). Thus for each $e_1=\{u,v,t\}\in\mathcal{F}_1$, the pair $vt$ is bad.

Since $u\notin \mathbf{R}'_4$, either $d(u_1)>3$ or $d(u_1)>d(yu_1)$.

If $G$ has an edge $e_6=\{y,v,w_2\}$ containing $y$ and $v$, then in order not to have triangle with edges $e_6,e_1$ and $e_3$, $w_2\in \{u,t,u_1\}$. On the other hand, we know both edges of $G$ containing $t$, so $w_2\neq t$, and we know that unique edge containing $\{y,u\}$ does not contain $w$. Thus, $w_2=u_1$. Since $d(yu_1)\geq 2$, $d(u_1)\geq 3$. Therefore, $\mathrm{flat}(v)\leq d(v)-2$ (because of $e_6$ and $g_2$), and $\mathrm{rich}(v)\geq 1$  (because of  $g_2$). So, if $d(v)\geq 4$, then as in~\eqref{v help}, $v$ is helpful, contradicting (c). Moreover, if $d(v)=3$, then $v$ is not rich, and hence either $e_6$ is reasonable or $u_1$ is rich, so
\[
d(v)+\mathrm{rich}(v)+\frac{1}{2}\mathrm{reas}(v) -\frac{1}{2}\mathrm{flat}(v)-4\geq \frac{1}{2}d(v)+1+1+\frac{1}{2}-4= 0.
\]
It follows that $y\notin N(v)$.

Since $u$ is $1$-flat, at most $9\ell$ vertices not adjacent to $y$ are at distance at most $2$ from $u$. Recall that for each $e'_1=\{u',v',t'\}\in\mathcal{F}_1$, $v'$ and $t'$ are not adjacent to $y$. So, since $|\mathcal{F}_1|\geq \frac{n}{2\ell}$, there is an edge $e'_1=\{u',v',t'\}\in\mathcal{F}_1$ such that the distance from $u$ to $\{v',t'\}$ is at least $3$. Then, since $v't'$ is a bad pair, adding the non-edge $\{v',t',u\}$ to $G$ does not create a triangle, a contradiction.
\end{proof}  

For ease of notation, let us define $\mathbf{R}=\mathbf{R}_{10}$.

\begin{lemma}\label{sizeR}
    The number of vertices in $\mathbf{R}$ is $o(n)$.
\end{lemma}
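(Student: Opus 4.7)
The plan is essentially bookkeeping: every set $R_i$ has already been bounded in the preceding lemmas, and I just have to add everything up while controlling the cost of the expansion operation $\mathbf{R}'_i=\mathbf{R}_i\cup(L\cap N(L_8\cap\mathbf{R}_i))$.

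First I would collect the individual bounds: $|R_1|\le 20n/\ell$ (Lemma~\ref{lemma x and y in H}), $|R_2|\le 12\ell^2$ (Lemma~\ref{lemma few bad pairs with low degrees in G-h}), $|R_3|\le 16\ell^2$ (Lemma~\ref{no 1-1flat}), $|R_4|\le 54n/\ell$ (Lemma~\ref{lemma few i vertices that are i-neighbors of h}), $|R_5|\le 2000n/\ell$ (Lemma~\ref{Daras}), $|R_6|\le 32\ell^2$ (Lemma~\ref{lemma few h33 edges with 1flat and 2flat}), $|R_7|\le 2n/\ell$ (Lemma~\ref{lemma 3flat 3vertices are adjacent to only 1 1flat 3vertex}), $|R_8|\le 2\ell^2|H|$ (Claim~\ref{claim far}), $|R_9|\le 60\ell^3$ (Lemma~\ref{lemma 1flat 2vtx double neighbor with 5vertex}), and $|R_{10}|\le n/\ell$ (Lemma~\ref{lemma 1flat vertices adjacent to 1flat 2 vertices}). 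The $R_8$ bound still involves $|H|$, so I would first upgrade it using Theorem~\ref{theorem upper bound}: since $|E(G)|\le 2n$ and every high vertex has degree at least $n/\ell^2$, summing degrees gives $|H|\cdot(n/\ell^2)\le 3|E(G)|\le 6n$, hence $|H|\le 6\ell^2$ and therefore $|R_8|\le 12\ell^4$.

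Next I would control the expansion. By definition every vertex in $L_8$ lies in at most $8$ edges, each contributing at most $2$ other vertices, so $|L\cap N(\{v\})|\le 16$ for every $v\in L_8$. Consequently
\[
|\mathbf{R}'_i|\le |\mathbf{R}_i|+16|L_8\cap\mathbf{R}_i|\le 17|\mathbf{R}_i|.
\]
Combining this with $|\mathbf{R}_i|\le |R_i|+|\mathbf{R}'_{i-1}|$ and iterating the recursion ten times yields the crude but adequate estimate
\[
|\mathbf{R}|\;\le\;17\sum_{i=1}^{10} 17^{10-i}|R_i|\;\le\;17^{10}\sum_{i=1}^{10}|R_i|.
\]

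Finally I would plug in the bounds. Each $|R_i|$ is either $O(n/\ell)$ or $O(\ell^k)$ with $k\le 4$, and $\ell=\ell(n)\to\infty$ with $\ell\ll\log n$, so $\sum_{i=1}^{10}|R_i|=O(n/\ell)+O(\ell^4)=o(n)$, and the harmless constant factor $17^{10}$ does not spoil this. There is no real obstacle: all of the hard work was done in the earlier sections, and the only subtle point to get right is that the expansion step adds low neighbors only of vertices of degree at most $8$, so each application of the expansion operation blows up the set by a \emph{bounded} multiplicative factor rather than by something like $\ell$; had we instead closed under low neighbors of arbitrary low vertices, the ten-fold iteration could have produced a factor of $(2\ell)^{10}$, which would have swamped the $O(n/\ell)$ terms.
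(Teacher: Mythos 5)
Your proof is correct and takes essentially the same route as the paper: collect the per-lemma bounds on $|R_i|$, observe that closure under low neighbors of degree-at-most-$8$ vertices multiplies the size by a bounded factor ($\leq 17$) at each of the ten stages, and conclude that $|\mathbf{R}|$ is a constant multiple of $\sum|R_i|=o(n)$. The only cosmetic difference is that the paper lumps all the $|R_i|$ into a single $O(n/\ell)$ bound, whereas you are slightly more careful in making explicit the intermediate step $|H|\leq 6\ell^2$ (hence $|R_8|\leq 12\ell^4$), which the paper leaves implicit.
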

	
\begin{proof} By definition, $\mathbf{R}$ is a subset of the set formed by $\bigcup_{i=1}^{10} R_i$ and the vertices in $L$  which we can reach from this union  by paths of length at most $10$ via vertices in $L_8$.
According to Remark~\ref{remark size of R1}, Claim~\ref{claim far} and Lemmas~\ref{lemma few bad pairs with low degrees in G-h},~\ref{no 1-1flat},~\ref{lemma few i vertices that are i-neighbors of h},~\ref{Daras},~\ref{lemma few h33 edges with 1flat and 2flat},~\ref{lemma 3flat 3vertices are adjacent to only 1 1flat 3vertex},~\ref{lemma 1flat 2vtx double neighbor with 5vertex}, and~\ref{lemma 1flat vertices adjacent to 1flat 2 vertices},
$|R_i|\leq 2000 n/\ell$ for each $1\leq i\leq 10$. 

Each vertex of degree at most $8$ has at most $16$ neighbors. Hence the total number of vertices reachable from $\bigcup_{i=1}^{10} R_i$
via paths of length at most $10$ in which all vertices apart from the last ones are in $L_8$ is at most
\[
\left|\bigcup_{i=1}^{10} R_i\right| \cdot \sum_{j=0}^{10} 16^j\leq 20000(n/\ell) 16^{11}=o(n).
\]
\end{proof}  
\begin{lemma}\label{lemma 1flat 4 or 5 is halfhelpful}
    Every  $1$-flat vertex $u\notin \mathbf{R} $ of degree $4$ or $5$ is half-helpful.
\end{lemma}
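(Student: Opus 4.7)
The plan is to argue by contradiction, supposing some $1$-flat vertex $u\notin\mathbf{R}$ with $k=d(u)\in\{4,5\}$ violates~\eqref{inequality halfhelpful definition}. Since $u\notin\mathbf{R}'_1$ and $u$ is $1$-flat, Lemma~\ref{lemma x and y in H} places the unique non-low neighbor $h$ of $u$ in $\{x,y\}$; write $e_0=\{h,u,w\}$ for the unique non-low edge at $u$. Being $1$-flat gives $\mathrm{supp}(u)=0$, and since $e_0$ is the only $(M,L,L)$-edge at $u$, also $\mathrm{rich}(u)+\mathrm{reas}(u)\leq 1$. Applying Lemma~\ref{lemma 1flat 2 vertex implies rich} at $u$ (using $d(hu)=1$, $u$ unsupported, $u\notin\mathbf{R}'_8$) rules out $w$ being a $1$-flat $2$-vertex, so $e_0$ is not a donor edge and $\mathrm{donor}(u)\leq k-1$.

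I next rule out $e_0$ being needy. The condition $u\notin R_3$ forces $d(hw)\geq 2$. If $w$ were a $1$-flat $3$-vertex then $e_0$ would be $w$'s unique high edge, forcing $d(hw)=1$, impossible. If $w$ were a $2$-vertex, then $d(hw)=2=d(w)$ would place $w$ in $L_8\cap R_4$, and adjacency to $w$ would then force $u\in\mathbf{R}'_4\subseteq\mathbf{R}$, also impossible. The conditions for $e_0$ being rich with recipient $w$ all fail (they demand $hu$ rich, $u$ supported, or $d(u)\geq 8$, none of which holds), so $e_0$ is rich with recipient $u$ or reasonable, giving $\mathrm{rich}(u)+\tfrac12\mathrm{reas}(u)\geq \tfrac12$. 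Moreover, for any donor edge $\{u,v,t\}$ with $t$ a $1$-flat $2$-vertex and $v$ low and unhelpful, the vertices $t,v$ are both in $L_8$, and $u\notin\mathbf{R}'_{10}$ forces $t,v\notin\mathbf{R}_{10}\supseteq\mathbf{R}'_9$; clauses (a), (b), (c) of the $R_{10}$-definition all hold, so clause (d) must fail, meaning the unique high neighbor of $t$ coincides with $h$.

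The only scenarios left in which~\eqref{inequality halfhelpful definition} can fail are $(k,\mathrm{rich}(u),\mathrm{reas}(u),\mathrm{donor}(u))\in\{(4,1,0,3),(4,0,1,2),(4,0,1,3),(5,0,1,4)\}$, so in each of these $u$ has at least two donor edges $\{u,v_i,t_i\}$ $(i=1,2)$ with each $t_i$ adjacent to $h$ via an edge $\{h,t_i,z_i\}$. The main obstacle is turning this structure into a contradiction with the $C_3^{(3)}$-freeness of $G$. I plan to add the non-edge $\{h,u,t_1\}$ (a non-edge since $d(hu)=1$) and analyze the forced $C_3^{(3)}$-completion in $G+\{h,u,t_1\}$. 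The candidate $h$-$t_1$ and $u$-$t_1$ links through $t_1$'s two edges are blocked either by containing $u$ or by having $h$ as their shared vertex, so the link must be an $h$-$u$-link avoiding $t_1$ whose $u$-edge is a low edge of $u$; pushing this through Claim~\ref{claim good pair double neighbor}, Claim~\ref{claim degree d d neighbor}, Lemma~\ref{lemma degree 4 double neighbors}, Lemma~\ref{lemma no (L,2,2) edges}, and Lemma~\ref{coneighbors}, I expect to either produce an actual $C_3^{(3)}$ in $G$ (by combining with $\{h,t_2,z_2\}$ and $\{u,v_2,t_2\}$) or to violate one of the membership conditions $u\notin\mathbf{R}'_i$. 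The bookkeeping is the primary technical hurdle and closely mirrors that of Lemma~\ref{lemma 2flat 4vertices are helpful}.
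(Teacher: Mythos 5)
Your setup is sound and tracks the paper's opening moves well: you correctly show $\mathrm{supp}(u)=0$, rule out $w$ being a $1$-flat $2$-vertex via Lemma~\ref{lemma 1flat 2 vertex implies rich}, rule out $e_0$ being needy or rich-with-recipient-$w$, conclude $\mathrm{rich}(u)+\tfrac12\mathrm{reas}(u)\geq\tfrac12$, and use the definition of $R_{10}$ to force each $t_i$ to share the high neighbor $h$ with $u$. Your enumeration of the failure scenarios $(k,\mathrm{rich},\mathrm{reas},\mathrm{donor})$ is also correct. (One small unjustified step: you assert $v\in L_8$ to invoke the $\mathbf{R}$-membership machinery; the right argument is that if $v\notin\mathbf{R}'_9$ with $d(v)\geq 6$ then $v$ is helpful by Lemma~\ref{claim supported or high degree implies helpful}, so $d(v)\leq 5$.)

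However, the final and essential part of the argument is left as an announced plan (``I plan to \dots I expect to \dots'') rather than an actual proof, and the plan itself does not match what the paper needs. The paper's argument hinges on a specific case split that you never identify: writing $e_0=\{h,u,a\}$, noting that the edges $\{h,t,z\}$, $\{u,w,t\}$, $\{h,u,a\}$ would form a triangle unless $a\in\{z,w\}$, and then treating $a=z$ and $a=w$ by completely different methods. In the case $a=z$, the edge $\{h,u,z\}$ is rich with recipient $u$ (because $hz$ is a rich pair), which already kills the $(4,0,1,\cdot)$ and $(5,0,1,4)$ scenarios and forces $d(u)=4$, $\mathrm{donor}(u)=3$; the contradiction then comes from adding the non-edge $\{u,t,t'\}$ for two donor $2$-vertices $t,t'$ and a chain of structural deductions. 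In the case $a=w$, no new non-edge is added at all: one shows $w$ is $2^+$-flat with $d(hw)=2$, then Lemmas~\ref{lemma 2flat 5vertices are helpful},~\ref{lemma 2flat 4vertices are helpful},~\ref{claim supported or high degree implies helpful} and~\ref{lemma no (L,2,2) edges} force $d(w)=3$, and finally the non-edge $\{h,w,t\}$ is shown to be uncompletable. Your proposed move of adding $\{h,u,t_1\}$ does not obviously reproduce either of these: the resulting forced $h,u$-link or $u,t_1$-link can be realized in several ways (through different donor or non-donor low edges of $u$, with center $v_j$ or $t_j$), and you give no analysis of which realizations are excluded and why the surviving ones yield a contradiction. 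As written this is a genuine gap: the hard half of the lemma is missing, and the route you sketch is different enough from the paper's that one cannot simply assume it closes.
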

	
\begin{proof}
	Assume  that  a $1$-flat vertex $u\notin \mathbf{R} $  of degree $4$ or $5$ is not half-helpful. If $u$ is in a $(M,u,2)$ edge with a $1$-flat $2$-vertex, then by Lemma~\ref{lemma 1flat 2 vertex implies rich}, $u$ is not $1$-flat.  So  the only edges  that contain $u$ and a $1$-flat $2$-vertex are low edges. If $u$ is in no such low edges, then $\mathrm{donor}(u)=0$, so $u$ satisfies \eqref{inequality halfhelpful definition} trivially. Thus we may assume $u$ is in at least one edge with a $1$-flat $2$-vertex, and a second vertex that is not helpful. Say $\{u,w,t\}\in E(G)$, where $t$ is a $1$-flat $2$-vertex and $w$ is not helpful. By Lemma~\ref{lemma 1flat vertices adjacent to 1flat 2 vertices}, $u$ and $t$ must both be adjacent to $x$ or both be adjacent to $y$, say $h\in \{x,y\}$ is this common neighbor of $u$ and $t$. 
	
	Let $\{h,t,z\}$ be the second edge of $G$ containing $t$, and note that $hz$ is a rich pair by Lemma~\ref{lemma 1flat 2 vertex implies rich}. Thus, if $z\in \{u,w\}$, this would imply either $hu$ or $hw$ is a rich pair, and thus $\{h,w,t\}$ contains a helpful vertex by Lemma~\ref{lemma rich vertices are helpful}. So, we may assume $z\not\in\{u,w\}$. Let $\{h,u,a\}$ be the non-low edge containing $u$. To avoid a $C_3^{(3)}$ in $G$ with $\{h,t,z\}$ and $\{u,t,w\}$, we must have  $a\in \{z,w\}$. 
	
	\textbf{Case 1:} $a=z$. Then $\{h,u,z\}$ is a rich edge with recipient $u$, so $\mathrm{rich}(u)\geq 1$. If $d(u)=5$, then $u$ satisfies $\eqref{inequality halfhelpful definition}$ even if $\mathrm{donor}(u)=4$, thus  $d(u)=4$. Furthermore, even if $d(u)=4$, $u$ satisfies \eqref{inequality halfhelpful definition} unless $\mathrm{donor}(u)=3$. Let $\{u,t',w'\}$ be a second edge containing $u$, a $1$-flat $2$-vertex $t'$ and a non-helpful vertex $w'$. By Lemma~\ref{lemma 1flat vertices adjacent to 1flat 2 vertices}, $t'$ and $u$ must share a high neighbor, in particular $h$. Let $\{h,t',z'\}\in E(G)$. 
	
	If $z'=w'$, then since $t'\notin \mathbf{R}'_{5} $, $d(w')\geq 4$. Furthermore, as  in this case $d(w't')=2$, by Lemmas \ref{lemma degree 4 double neighbors} and \ref{lemma 1flat 2vtx double neighbor with 5vertex}, we actually have $d(w')\geq 6$. However, then by Lemma~\ref{claim supported or high degree implies helpful} $w'$ is helpful, contradicting our earlier assumption. Thus $z'\neq w'$.
	
	Then the only way the edges $\{h,u,z\}$, $\{h,t',z'\}$ and $\{u,w',t'\}$ do not form a $C_3^{(3)}$ in $G$ is that $z'=z$. Consider the non-edge $\{u,t,t'\}$, and let $T$ be a $C_3^{(3)}$ in $G+\{u,t,t'\}$. Note that $t$ and $t'$ cannot both be core vertices in $T$ since the only edges containing $t$ and $t'$ that do not intersect $\{u,t,t'\}$ in two vertices are $\{h,z,t\}$ and $\{h,z,t'\}$, which do not form a link. Thus, $u$ is a core vertex of $T$, along with one of $t$ and $t'$, say with $t$. In this case, the edge $\{h,t,z\}$ must be one of the edges of $T$, and the second edge must contain $u$ and exactly one vertex in $\{h,z\}$. As $u$ is $1$-flat, this second edge does not contain $h$, so there must be some edge $\{u,z,b\}$. Since $\mathrm{donor}(u)=3$, then we must actually have that $b$ is a $1$-flat $2$-vertex and $z$ is not helpful.  But $hz$ is a rich pair, so we arrive at a contradiction to Lemma~\ref{lemma rich vertices are helpful}.
	
	\textbf{Case 2:} $a=w$. Since $u\notin \mathbf{R}'_{3} $,
	 $d(hw)\geq 2$, say $\{h,w,b\}$ is an edge. To avoid a $C_3^{(3)}$ in $G$, we must have $b=z$. Since $w$ is not helpful, by Lemma~\ref{lemma rich vertices are helpful},  $d(hw)=2$. Furthermore, since $w$ is $2^+$-flat, by Lemmas \ref{lemma 2flat 5vertices are helpful}, \ref{lemma 2flat 4vertices are helpful} and~\ref{claim supported or high degree implies helpful}, $w$ is helpful unless $d(w)\leq 3$. Furthermore, if $d(w)\leq 3$, then by Lemma~\ref{lemma no (L,2,2) edges}, $d(w)=3$. However, in this case, the non-edge $\{h,w,t\}$ intersects every edge containing either $w$ or $t$ in two vertices, so adding $\{h,w,t\}$ to $G$ does not create a $C_3^{(3)}$, a contradiction.
\end{proof}
	
		\section{Lower Bound: Discharging and Final Proof}\label{section discharging}
	We are now ready to present our discharging rules and prove the lower bound. Recall that every vertex in $G$ starts with charge equal to their degree. We then move charge around in $G$ according to the following rules:
	
	\begin{itemize}
	    \item[(D1)] A $(M,M,L)$ edge $\{h,h',u\}$ with $u\in L$ removes charge $1$ from each of $h$ and $h'$ and gives charge $2$ to $u$.
	    \item[(D2)] A needy $(M,L,L)$ edge $\{h,u,t\}$, where $h\in M$ and $t$ is a $2$-vertex or a $1$-flat $3$-vertex removes charge $1$ from $h$ and gives charge $1$ to $t$.
	    \item[(D3)] A rich $(M,L,L)$ edge $\{h,u,v\}$ with $h\in M$ and $v$ the recipient removes charge $1$ from $h$ and gives charge $1$ to $v$.
	    \item[(D4)] A reasonable $(M,L,L)$ edge $\{h,u,v\}$ with $h\in M$ removes charge $1$ from $h$ and gives charge $1/2$ to each of $u$ and $v$.
	    \item[(D5)] All helpful vertices give charge $1/2$ to each of their $1$-flat $2$-neighbors (with multiplicity).
	    \item[(D6)] All vertices that are not helpful but are half-helpful give charge $1/2$ to each of their $1$-flat $2$-neighbors that are not at charge at least $4$ after the application of rules (D1) through (D5).
	\end{itemize}
	
	Now we will use the above discharging scheme to bound the number of edges in $G$.
	
	\begin{theorem}
	The saturation number
	\[
    \mathrm{sat}_3(n,C_3^{(3)})\geq \frac{4n}3-o(n).
    \]
	\end{theorem}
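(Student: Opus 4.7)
The plan is to verify that under the discharging rules (D1)--(D6), total charge is preserved, every vertex ends with non-negative charge, and every vertex in $L \setminus \mathbf{R}$ ends with charge at least $4$. Since the total charge equals $\sum_v d(v) = 3|E(G)|$, and $|\mathbf{R}| + |M| = o(n)$ by Lemma~\ref{sizeR} and the bound $|M| \leq 6n/\ell$, this yields $3|E(G)| \geq 4(n - o(n))$, i.e., $|E(G)| \geq \frac{4n}{3} - o(n)$.

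First I would check that each vertex $h \in M$ ends with non-negative charge. Every edge incident to $h$ falls into at most one of the categories triggering (D1)--(D4), and in each case exactly $1$ unit leaves $h$ through that edge; rules (D5)--(D6) never remove charge from a vertex of $M$. Hence $h$ loses at most $d(h)$, ending at charge at least $0$.

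Next, I would verify that every vertex $u \in L \setminus \mathbf{R}$ reaches charge at least $4$, stratified by degree. All $0$- and $1$-vertices lie in $R_1 \cup R_4 \subseteq \mathbf{R}$, so $d(u) \geq 2$. When $u$ is supported or $d(u) \geq 6$, Lemma~\ref{claim supported or high degree implies helpful} makes $u$ helpful, and the defining inequality~\eqref{inequality helpful definition} is precisely what ensures $u$ retains charge $\geq 4$ after (D5). For $d(u) \in \{4,5\}$, a $2^+$-flat $u$ is helpful by Lemma~\ref{lemma 2flat 5vertices are helpful} or Lemma~\ref{lemma 2flat 4vertices are helpful}, while a $1$-flat $u$ is half-helpful by Lemma~\ref{lemma 1flat 4 or 5 is halfhelpful}, and~\eqref{inequality halfhelpful definition} gives the conclusion because (D6) only fires along $u$'s donor edges. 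For $d(u) = 3$, a $1$-flat $3$-vertex gets $+1$ via (D2), and the exclusions $u \notin R_3 \cup R_6 \cup R_7$ guarantee that (D5)--(D6) do not drain $u$; a $2^+$-flat $3$-vertex gets $+1$ from each of its non-low edges via (D2), (D3), or (D4), reaching at least $4$. For $d(u) = 2$, a supported vertex gains $+2$ via (D1); a $2$-flat unsupported vertex gains $+1$ from each of its two non-low neighbors via (D2); the remaining case is addressed below.

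The delicate case, and the main obstacle, is a $1$-flat $2$-vertex $t \notin \mathbf{R}$. Rule (D2) contributes only $+1$, bringing $t$ to charge $3$, so $t$ needs an additional $+1$ from (D5) and/or (D6). Both edges containing $t$ carry at least one low vertex of degree at least $4$: the non-low edge $\{h,w,t\}$ does so via Corollary~\ref{remM22} combined with Lemma~\ref{lemma 1flat 2 vertex implies rich} (which forces $hw$ to be a rich pair and hence $w$ to be helpful by Lemma~\ref{lemma rich vertices are helpful}), and the low edge contains a $4^+$-vertex by Lemma~\ref{lemma low edge containing 1flat 2vtx contains 4+vertex}. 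The exclusion $t \notin R_{10}$ together with Lemmas~\ref{lemma 2flat 5vertices are helpful},~\ref{lemma 2flat 4vertices are helpful}, and~\ref{lemma 1flat 4 or 5 is halfhelpful} ensures that this low $4^+$-vertex is helpful or half-helpful. Thus $t$ collects $+\tfrac{1}{2}$ from each of its two edges via (D5) or (D6), reaching charge $4$. Combining all of these cases, every vertex ends with non-negative charge and all but the $o(n)$ vertices of $\mathbf{R} \cup M$ end with charge at least $4$, and the claimed lower bound follows.
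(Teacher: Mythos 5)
Your proposal mirrors the paper's own proof essentially step for step: the same discharging rules (D1)--(D6), the same degree-stratified verification that vertices in $M$ keep non-negative charge and vertices in $L\setminus\mathbf{R}$ reach charge $4$, the same supporting lemmas for each degree class, and the same final computation from $|\mathbf{R}\cup M|=o(n)$. A couple of intermediate statements are loose but do not change the conclusion: a reasonable edge under (D4) gives each low endpoint only $1/2$, not $+1$, so a $2^+$-flat unsupported $3$-vertex reaches $4$ because at least two of its non-low edges contribute $\geq 1/2$ each; and the exclusions $R_3$, $R_6$, $R_7$ together with Corollary~\ref{remM22} serve to guarantee that (D2)/(D3)/(D4) actually deliver charge to a $3$-vertex (e.g.\ that the non-low edge of a $1$-flat $3$-vertex is needy with that vertex as recipient), not to keep (D5)--(D6) from draining it, since those rules only remove charge from helpful or half-helpful vertices, which by the inequalities \eqref{inequality helpful definition} and \eqref{inequality halfhelpful definition} always retain $\geq 4$.
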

	
	\begin{proof}
	We first prove that the average degree of $G$ is at least $4-o(1)$. Note that since $\ell=\omega(n)$, $|E(G)|\leq 2n$ and $M$ only contains vertices with degree at least $\ell$, $|M|=o(n)$, and consequentially $|L|=n-o(n)$.
	
    We claim that after applying (D1)-(D6), no vertices end up with negative charge. Indeed, first note that vertices in $M$ only lose charge $1$ for each edge they are in, so since the initial charge is their degree, vertices in $M$ end up with non-negative charge. Furthermore, vertices in $L$ that are not helpful or half-helpful do not give away charge, so end up with charge at least equal to their degree. Finally, vertices in $L$ that are helpful or half-helpful end up with at least charge $4$ by the way they are defined. Thus, no vertex ends up with negative charge after applying our discharging scheme.
    
    Now, we claim that all vertices in $L\setminus\mathbf{R}$ end up with charge at least $4$. Let $v\in L\setminus\mathbf{R}$. Note that (D1)-(D6) only removes charge from low vertices if they are helpful or half-helpful, and by the definitions of helpful and half-helpful, these vertices are left with at least charge $4$, so we may assume $v$ is not helpful or half-helpful, and thus $v$ does not give out any charge. Now, let us consider cases based on $d(v)$.
    
    \textbf{Case 1:} $d(v)\leq 1$. Then $v$ is in $R_1$ or in $R'_4$.
    
    \textbf{Case 2:} $d(v)=2$. We need to show that $v$ receives charge at least $2$. If $v$ is supported, (D1) gives $v$ at least charge $2$. If $v$ is not supported but is $2$-flat, then $v$ receives charge $1$ from each non-low edge containing $v$ via (D2). If $v$ is $0$-flat, then $v$ is in $R_1$. Finally, if $v$ is $1$-flat and not in $\mathbf{R}$, then the non-low edge containing $v$ first gives $v$ charge $1$ via (D2), and then by Lemmas \ref{lemma 1flat 2 vertex implies rich} and \ref{lemma rich vertices are helpful}, this non-low edge contains a helpful vertex, and thus it also gives $v$ charge $1/2$ via (D5). The final $1/2$ of charge $v$ needs comes from the low edge containing $v$. Indeed, by Lemma~\ref{lemma low edge containing 1flat 2vtx contains 4+vertex}, this low edge contains a vertex of degree at least $4$, and by Lemma~\ref{claim supported or high degree implies helpful} and Lemmas~\ref{lemma 2flat 4vertices are helpful}, \ref{lemma 2flat 5vertices are helpful} and \ref{lemma 1flat 4 or 5 is halfhelpful}, this vertex is helpful or half-helpful so gives $v$ charge $1/2$ via (D4) or (D5). This gives $v$ total charge at least $4$.
    
    \textbf{Case 3:} $d(v)=3$. Then $v$ needs to get charge at least $1$. If $v$ is $1$-flat, then $v$ gets charge $1$ via its non-low edge by (D2). If $v$ is supported, then it gets charge $2$ via (D1), if $v$ is $2$-flat and not supported, then by Corollary~\ref{remM22}, $v$ is not in a non-low edge with any $2$-vertices, and since $v\notin \mathbf{R}'_6$, $v$ is not in a non-low edge with a $1$-flat $3$-vertex. So both non-low edges containing $v$ are either reasonable or rich with recipient $v$, thus each of these edges gives $v$ charge $1/2$  by (D3) or (D4). Finally if $v$ is $3$-flat and not supported, then by Corollary~\ref{remM22}, $v$ has no $2$-neighbors, and  since $v\notin \mathbf{R}'_7$, $v$ has at most one $1$-flat $3$-neighbor. Thus, $v$ is in at least two non-low edges that do not contain $1$-flat $3$-vertices. These edges will each give charge at least $1/2$ to $v$ as long as they are not rich with a recipient that is not $v$. But since $v$ is not supported and $d(v)<8$, the only way such an edge could be rich with a recipient that is not $v$ is if $hv$ is a rich pair for some $h\in M$. This is not the case since $v\notin \mathbf{R}'_4$. Thus, $v$ gets charge at least $1/2$ via the two non-low edges that contain $v$ and no $1$-flat $3$-vertex by either (D3) or (D4).
    
    \textbf{Case 4:} $d(v)\geq 4$. Since $v\notin R_1$, $v$ is $1^+$-flat, and thus by Lemma~\ref{claim supported or high degree implies helpful} and Lemmas~\ref{lemma 2flat 4vertices are helpful}, \ref{lemma 2flat 5vertices are helpful} and \ref{lemma 1flat 4 or 5 is halfhelpful}, $v$ is either helpful or half-helpful. By the definition of helpful or half-helpful, even after donating some charge via (D5) or (D6), $v$ is left with at least charge $4$.
    
    Thus, every vertex in $L\setminus \mathbf{R}$ ends up with charge at least $4$ and all other vertices end up with non-negative charge. Since the total charge is equal to the total degree, we have that the average degree of $G$ is at least
    \[
    \frac{4|L\setminus\mathbf{R}|}{n}\geq \frac{4((n-o(n))-o(n))}n=4-o(1).
    \]
    Consequently,
    \[
    \mathrm{sat}(n,C_3^{(3)})=|E(G)|\geq \frac{(4-o(1))n}{3}=\frac{4n}3-o(n).
    \]
	\end{proof}

%\section{Concluding remarks}

\section{Acknowledgements}
The first author would like to thank Bill Kay and Erin Meger for helpful discussions and ideas involving the topic of $C_3^{(3)}$-saturated graphs.

	\bibliographystyle{plain}
\bibliography{satrefs}

\begin{thebibliography}{10}

\bibitem{AE2019}
B.~Austhof and S.~English.
\newblock Nearly-regular hypergraphs and saturation of {B}erge stars.
\newblock {\em Electron. J. Combin.}, 26(4):Paper No. 4.49, 11, 2019.

\bibitem{AW2019}
M.~Axenovich and C.~Winter.
\newblock A note on saturation for {B}erge-{$G$} hypergraphs.
\newblock {\em Graphs Combin.}, 35(4):933--939, 2019.

\bibitem{B1965}
B.~Bollob\'{a}s.
\newblock On generalized graphs.
\newblock {\em Acta Math. Acad. Sci. Hungar.}, 16:447--452, 1965.

\bibitem{B1972}
J.~Bondy.
\newblock Variations on the {H}amiltonian theme.
\newblock {\em Canad. Math. Bull.}, 15:57--62, 1972.

\bibitem{C2009}
Y.~Chen.
\newblock Minimum {$C_5$}-saturated graphs.
\newblock {\em J. Graph Theory}, 61(2):111--126, 2009.

\bibitem{C2011}
Y.~Chen.
\newblock All minimum {$C_5$}-saturated graphs.
\newblock {\em J. Graph Theory}, 67(1):9--26, 2011.

\bibitem{CW2017}
D.~Cranston and D.~West.
\newblock An introduction to the discharging method via graph coloring.
\newblock {\em Discrete Math.}, 340(4):766--793, 2017.

\bibitem{DDFL1985}
S.~Dow, D.~Drake, Z.~F\"{u}redi, and J.~Larson.
\newblock A lower bound for the cardinality of a maximal family of mutually
  intersecting sets of equal size.
\newblock In {\em Proceedings of the sixteenth {S}outheastern international
  conference on combinatorics, graph theory and computing ({B}oca {R}aton,
  {F}la., 1985)}, volume~48, pages 47--48, 1985.

\bibitem{DZ2012}
A.~Dudek and A.~\.{Z}ak.
\newblock On hamiltonian chain saturated uniform hypergraphs.
\newblock {\em Discrete Math. Theor. Comput. Sci.}, 14(1):21--28, 2012.

\bibitem{EGMT2019}
S.~English, D.~Gerbner, A.~Methuku, and M.~Tait.
\newblock Linearity of saturation for {B}erge hypergraphs.
\newblock {\em European J. Combin.}, 78:205--213, 2019.

\bibitem{EGGMS2019}
S.~English, P.~Gordon, N.~Graber, A.~Methuku, and E.~Sullivan.
\newblock Saturation of {B}erge hypergraphs.
\newblock {\em Discrete Math.}, 342(6):1738--1761, 2019.

\bibitem{EHM1964}
P.~Erd\H{o}s, A.~Hajnal, and J.~Moon.
\newblock A problem in graph theory.
\newblock {\em Amer. Math. Monthly}, 71:1107--1110, 1964.

\bibitem{FFGJ2009}
J.~Faudree, R.~Faudree, R.~Gould, and M~Jacobson.
\newblock Saturation numbers for trees.
\newblock {\em Electron. J. Combin.}, 16(1):Research Paper 91, 19, 2009.

\bibitem{FFS2011}
J.~Faudree, R.~Faudree, and J.~Schmitt.
\newblock A survey of minimum saturated graphs.
\newblock {\em Electron. J. Combin.}, DS19(Dynamic Surveys):Paper No. DS19, 36,
  2011.

\bibitem{FJMTW2012}
M.~Ferrara, M.~Jacobson, K.~Milans, C.~Tennenhouse, and P.~Wenger.
\newblock Saturation numbers for families of graph subdivisions.
\newblock {\em J. Graph Theory}, 71(4):416--434, 2012.

\bibitem{FK2013}
Z.~F\"{u}redi and Y.~Kim.
\newblock Cycle-saturated graphs with minimum number of edges.
\newblock {\em J. Graph Theory}, 73(2):203--215, 2013.

\bibitem{GPTV2022}
D.~Gerbner, B.~Patk\'{o}s, Zs. Tuza, and M.~Vizer.
\newblock On saturation of {B}erge hypergraphs.
\newblock {\em European J. Combin.}, 102:Paper No. 103477, 7, 2022.

\bibitem{GLS2006}
R.~Gould, T.~{\L} uczak, and J.~Schmitt.
\newblock Constructive upper bounds for cycle-saturated graphs of minimum size.
\newblock {\em Electron. J. Combin.}, 13(1):Research Paper 29, 19, 2006.

\bibitem{KT1986}
L.~K\'{a}szonyi and Zs. Tuza.
\newblock Saturated graphs with minimal number of edges.
\newblock {\em J. Graph Theory}, 10(2):203--210, 1986.

\bibitem{KK1999}
G.~Katona and H.~Kierstead.
\newblock Hamiltonian chains in hypergraphs.
\newblock {\em J. Graph Theory}, 30(3):205--212, 1999.

\bibitem{LJZY1997}
X.~Lin, W.~Jiang, C.~Zhang, and Y.~Yang.
\newblock On smallest maximally non-{H}amiltonian graphs.
\newblock {\em Ars Combin.}, 45:263--270, 1997.

\bibitem{MHHG2021}
Y.~Ma, X.~Hou, D.~Hei, and J.~Gao.
\newblock Minimizing the number of edges in {$\mathcal{C}_{\geq r}$}-saturated
  graphs.
\newblock {\em Discrete Math.}, 344(11):Paper No. 112565, 14, 2021.

\bibitem{O1972}
L.~Ollmann.
\newblock {$K_{2,2}$} saturated graphs with a minimal number of edges.
\newblock In {\em Proceedings of the {T}hird {S}outheastern {C}onference on
  {C}ombinatorics, {G}raph {T}heory, and {C}omputing ({F}lorida {A}tlantic
  {U}niv., {B}oca {R}aton, {F}la., 1972)}, pages 367--392, 1972.

\bibitem{P1999}
O.~Pikhurko.
\newblock The minimum size of saturated hypergraphs.
\newblock {\em Combin. Probab. Comput.}, 8(5):483--492, 1999.

\bibitem{P2004}
O.~Pikhurko.
\newblock Results and open problems on minimum saturated hypergraphs.
\newblock {\em Ars Combin.}, 72:111--127, 2004.

\bibitem{RZ2016}
A.~Ruci\'{n}ski and A.~\.{Z}ak.
\newblock Upper bounds on the minimum size of {H}amilton saturated hypergraphs.
\newblock {\em Electron. J. Combin.}, 23(4):Paper 4.12, 26, 2016.

\bibitem{T1994}
Zs. Tuza.
\newblock Applications of the set-pair method in extremal hypergraph theory.
\newblock In {\em Extremal problems for finite sets ({V}isegr\'{a}d, 1991)},
  volume~3 of {\em Bolyai Soc. Math. Stud.}, pages 479--514. J\'{a}nos Bolyai
  Math. Soc., Budapest, 1994.

\bibitem{ZLS2015}
M.~Zhang, S.~Luo, and M.~Shigeno.
\newblock On the number of edges in a minimum {$C_6$}-saturated graph.
\newblock {\em Graphs Combin.}, 31(4):1085--1106, 2015.

\end{thebibliography}
	
\end{document}